\documentclass[a4paper,french]{article}
\usepackage[T1]{fontenc}
\usepackage{babel}
\usepackage{fancyhdr}
\usepackage{geometry}
\fancyhead[LE,RO]{\thepage}

\fancyhead[LO]{\thesection}
\fancyhead[RE]{\thesubsection}
\usepackage{bibtopic}

\usepackage{graphicx}
\usepackage{xcolor}
\usepackage{pstricks}
\usepackage{subfig}
\usepackage{titletoc}

\makeatletter
\renewcommand{\subsection}{\@startsection{subsection}{2}{\z@}%
             {4ex \@plus 1ex \@minus.2ex}%
             {1.5ex \@plus.1ex \@minus.5ex}%
             {\normalfont\large\bfseries}}
\renewcommand{\subsubsection}{\@startsection{subsubsection}{3}{\z@}%
             {1.5ex \@plus .5ex \@minus.2ex}%
             {-1.5ex}%
             {\normalfont\normalsize\bfseries}}
\renewcommand{\tableofcontents}{\par\@starttoc{toc}}
\renewcommand*\l@subsection{\@dottedtocline{2}{4em}{2em}}
\let\@@seccntformat\@seccntformat
\renewcommand*{\@seccntformat}[1]{%
  \expandafter\ifx\csname @seccntformat@#1\endcsname\relax
    \expandafter\@@seccntformat
  \else
    \expandafter
      \csname @seccntformat@#1\expandafter\endcsname
  \fi
    {#1}%
}
\newcommand*{\@seccntformat@section}[1]{%
  \csname the#1\endcsname.\quad\ignorespaces
}
\newcommand*{\@seccntformat@subsection}[1]{%
  \csname the#1\endcsname.\quad\ignorespaces}
\newcommand*{\@seccntformat@subsubsection}[1]{%
  \csname the#1\endcsname.\enspace\ignorespaces}

\newcommand{\espace}{\vspace{0.4em \@plus 0.1em \@minus 0.2em}}
\makeatother

\usepackage{amsmath}
\usepackage{amssymb, amsthm}
\usepackage{xypic}
\newtheorem{thm}{Theorem}[section]  
\newtheorem{prop}[thm]{Proposition}
\newtheorem{lemma}[thm]{Lemma}
{\theoremstyle{definition}
       
         }

 \newcommand{\BBB}{\mathbf{B}} \newcommand{\CCC}{\mathbf{C}} \newcommand{\DDD}{\mathbf{D}}
 \newcommand{\FFF}{\mathbf{F}}  \newcommand{\HHH}{\mathbf{H}}
 \newcommand{\JJJ}{\mathbf{J}} \newcommand{\KKK}{\mathbf{K}} 
   \newcommand{\PPP}{\mathbf{P}}
 \newcommand{\RRR}{\mathbf{R}} \newcommand{\SSS}{\mathbf{S}} 
    
 \newcommand{\ZZZ}{\mathbf{Z}}
\newcommand{\Acc}{\mathcal{A}}   
\newcommand{\Ecc}{\mathcal{E}} \newcommand{\Fcc}{\mathcal{F}}   
\newcommand{\Icc}{\mathcal{I}}   %
\newcommand{\Mcc}{\mathcal{M}} \newcommand{\Ncc}{\mathcal{N}}  
\newcommand{\Pcc}{\mathcal{P}}   \newcommand{\Scc}{\mathcal{S}} 
 \newcommand{\Ucc}{\mathcal{P}}

\newcommand{\Mchat}{\widehat{\Mcc}}


\newcommand{\Lambdatilde}{\widetilde{\Lambda}}

\newcommand{\dbar}{\overline{\partial}}  \newcommand{\wbar}{\overline{w}} 
\newcommand{\xbar}{\overline{x}} \newcommand{\ybar}{\overline{y}} \newcommand{\zbar}{\overline{z}}

\newcommand{\Mbar}{\overline{M}}   

 \newcommand{\loc}{\mbox{\footnotesize{loc}}}     
      
\newcommand{\imm}{\mbox{Im} \,}    \newcommand{\area}{\mbox{area}}   
  \newcommand{\crit}{\mbox{Crit}}   
\newcommand{\id}{\textmd{Id}}

\newcommand{\ind}{\mbox{Ind}\, }     
               
\newcommand{\diam}{\mbox{diam}\, } 

\newcommand{\ree}{\mbox{Re}\,}       \newcommand{\gr}{\mbox{Graph}\,}

\setlength{\unitlength}{5mm}
\begin{document}
\bibliographystyle{alpha}
\title{A survey on the Theorem of Chekhanov}
\author{Benoit Tonnelier}
\maketitle
\vspace{4\baselineskip}
\section*{Introduction.}\label{Section:Introduction}
Introduced by H. Hofer \cite{HoferZehnder}, the \textit{displacement energy} of a subset $X$ of the symplectic manifold $(M,\omega)$ is the minimal mean oscillation norm for a Hamiltonian to displace $X$ (see the reminders below). Floer homologies have been developped since the early work of A. Floer \cite{Floer88,Floer88bis,Floer88ter}. Estimating the displacement energy appears unquestionably as one of its important applications. In that direction, compact Lagrangian submanifolds $L$ have positive displacement energies, under natural assumptions on the symplectic topology of $(M,\omega)$ at infinity.

More precisely, the displacement energy of $L$ is greater than or equal to the minimal symplectic area of holomorphic disks bounded by $L$. This precise estimate was obtained by Y.V. Chekhanov~\cite{Chek98} in 1998\footnote{Chekhanov's paper is concerned with rational closed Lagrangian submanifolds in compact symplectic manifolds. But his work extends to the more general situation stated here.}. There exist different approaches to get it. The work presented below puts side by side those using methods related to the Lagrangian Floer homology. The estimate can be deduced from the celebrated paper of M. Gromov \cite{Gromov85}.
\subsubsection*{Acknowledgments.} I warmly thank C. Viterbo for many usefull discussions on the localization of Floer homologies. His comments on a first draft helped me to improve this paper.

\subsection*{Notations.}\label{Subsection:Notations} As usual, $\omega$ denotes a symplectic form on the manifold $M$ and $n$ is half the dimension of $M$. For an introduction to the symplectic
topology, see the classical books \cite{McDSal2,HoferZehnder}. A \textit{Hamiltonian} is a compactly supported time-depending $C^3$-function $H:[0,1]\times M\rightarrow \RRR$. Its \textit{Hamiltonian vector field} $\{X_t\}$ is implicitely defined via the formula $\iota(X_t)\omega=-dH_t$. Its flow $\{\varphi_t^H\}$ is called the \textit{Hamiltonian flow of $H$}. The \textit{displacement energy} of a compact subset $K\subset M$ is defined as 
\[E(K)=\inf\left\{\|H\|,\, \varphi_1^H(K)\cap K=\emptyset\right\}\, ,\]
where $\|H\|$ is the \textit{mean oscillation}\footnote{Often called the \textit{Hofer norm} of $H$, see for instance \cite{HoferZehnder,McDSal2}.} of $H$:
\[\|H\|=\int_0^1\left[\max H_t-\min H_t\right]dt\, .\]
Define also the functionals $\|\cdot\|_+$ and $\|\cdot\|_-$ by
\begin{align*}
\|H\|_+ & =\int_0^1(\max H_t)dt\\
\|H\|_- & =-\int_0^1(\min H_t)dt = \|-H\|_+
\end{align*}
Whenever $K\cap \varphi_1^H(K)=\emptyset$, the Hamiltonian $H$ is said to displace $K$.

An almost complex structure $J$ is $\omega$-tame when $\omega(X,JX)$ is positive for all nonzero vectors~$X$. Basic facts on the holomorphic curves are quickly recalled in subsection~\ref{Subsection:HolomorphicCurves} and appendix~\ref{Subsection:EstimatesEnergies}. Throughout the paper, the following notations are used:
\begin{align} \DDD & =\left\{z\in \CCC,\, |z|\leq 1\right\}\, ,\\
\DDD_{\pm} & = \left\{z\in \CCC,\, |z|\leq 1\mbox{ and } \pm \ree z\geq 0\right\}\, ,\\
\BBB & =\left\{z\in \CCC,\, 0\leq \imm z\leq 1\right\}\, ,\\
\BBB_R & =\left\{z\in \CCC, \, 0\leq \imm z\leq 1\mbox{ and } |\ree z|\leq R\right\}\, .
\end{align}
A compact submanifold $L$ is called \textit{Lagrangian} if $L$ is $n$-dimensional and $\omega$ vanishes along $TL$. Given an $\omega$-tame almost complex structure $J$, set $\hbar_L(J)$ to be the minimal symplectic area of non-constant $J$-holomorphic disks bounded by $L$. Set $\hbar_L=\sup\hbar_L(J)$, the supremum being taken on the space\footnote{Other assumptions on the regularity are possible. But different choices do not affect the constant $\hbar_L$.} $\Icc^3(\omega)$ of $\omega$-tame almost complex structures of class $C^3$. Provided that the symplectic manifold $(M,\omega)$ is geometrically bounded (see the definition in section \ref{Section:Gromov'sProof}), $\hbar_L>0$.
\begin{thm}[Chekanov 1998 \cite{Chek98,Oh}]\label{thm:Chekhanov}
Let $L$ be a compact Lagrangian submanifold of a geometrically bounded symplectic manifold $(M,\omega)$. Then, its displacement energy is positive:
\[E(L)\geq \hbar_L\, .\]
Moreover, for each generic Hamiltonian $H$ whose mean oscillation is less than $\hbar_L$, the intersection $L\cap \varphi_1^H(L)$ is finite, and
\[\sharp L\cap \varphi_1^H(L)\geq \sum_{i=0}^n \dim H_i(L,\FFF_2)\, .\]
\end{thm}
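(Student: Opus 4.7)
The plan is to construct a Lagrangian Floer homology $HF(L,H)$, with coefficients in $\FFF_2$, which is well defined for every generic Hamiltonian $H$ with $\|H\|<\hbar_L$; to prove that it is invariant along admissible deformations of $H$; and finally to identify it with the singular homology $H_*(L;\FFF_2)$ by taking $H$ to be a very small Morse function on $L$. Both statements of the theorem will follow at once.

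\textbf{Setting up the complex.} For generic $H$ with $\|H\|<\hbar_L$, choose $J\in\Icc^3(\omega)$ such that $\|H\|<\hbar_L(J)$; this is possible because $\hbar_L$ is defined as the supremum of $\hbar_L(J)$. Generically the intersection $L\cap\varphi_1^H(L)$ is transverse, hence finite (geometric boundedness is used here to rule out accumulation at infinity), and freely generates a $\FFF_2$-vector space $CF(L,H)$. The differential is obtained by counting isolated solutions $u:\BBB\to M$ of the perturbed Cauchy--Riemann equation $(\partial_s+J\partial_t-X_t)u=0$ with both boundaries on $L$ and convergent to intersection points at $\pm\infty$. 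The key analytic input, recalled in appendix~\ref{Subsection:EstimatesEnergies}, is that the geometric energy $\int u^*\omega$ of any such strip is bounded above by $\|H\|$.

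\textbf{No bubbling, $\partial^2=0$ and invariance.} In the Gromov compactification of these moduli spaces, a disk bubble attached to $L$ would carry symplectic area at least $\hbar_L(J)$; since $\|H\|<\hbar_L(J)$, bubbling is prohibited and the compactification reduces to the classical broken strips of Floer theory. The resulting differential therefore squares to zero and defines $HF(L,H)$. The same energy-versus-$\hbar_L(J)$ dichotomy, applied to continuation strips interpolating between two admissible pairs $(H_0,J_0)$ and $(H_1,J_1)$ along a path whose Hofer norm stays below $\hbar_L$, shows that $HF(L,H)$ is independent of the admissible choice.

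\textbf{Computation, conclusion and main difficulty.} Taking $H$ to be the time-independent pullback of a $C^2$-small Morse function $f$ on $L$ through a Weinstein tubular neighbourhood identifies $L\cap\varphi_1^H(L)$ with the critical set of $f$ and, for a suitable $J$, the Floer strips with the Morse gradient lines of $f$; hence $HF(L,H)\cong H_*(L;\FFF_2)$. If some $H$ with $\|H\|<\hbar_L$ displaced $L$, then $CF(L,H)=0$, so $HF(L,H)=0$, contradicting the fact that $H_0(L;\FFF_2)\neq 0$. This gives $E(L)\geq\hbar_L$, and the inequality $\sharp L\cap\varphi_1^H(L)=\dim CF(L,H)\geq\dim HF(L,H)=\sum_i\dim H_i(L;\FFF_2)$ is then automatic. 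The main obstacle is analytic: securing the sharp energy inequality $E(u)\leq\|H\|$ (no slack is allowed, since the theorem is sharp), handling the careful passage from $\hbar_L$ to $\hbar_L(J)$, and running Gromov compactness in a merely geometrically bounded target so that the only possible limit objects really are broken strips plus disk bubbles.
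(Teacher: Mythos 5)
Your proposal rests on a claim that fails: the symplectic area $\int u^*\omega$ of an arbitrary Floer strip between two intersection points is \emph{not} bounded by $\|H\|$. Two strips joining the same pair of points can differ by any class in $\pi_2(M,L)$, so their areas differ by arbitrary amounts. The identity of Lemma~\ref{lemma:A3} gives $E(u)=\Acc_H(\xbar)-\Acc_H(\ybar)$ only once capping half-disks are chosen, and this quantity is unbounded as the caps range over $\pi_2(M,L)$. Consequently your differential is, in general, not even a finite sum (infinitely many homotopy classes of index-one strips can contribute), and if you complete over a Novikov ring the energies of the counted strips are not controlled by $\hbar_L(J)$, so bubbling reenters. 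The paper sidesteps this precisely by working with the \emph{filtered} complex $CF_*^{(a,b]}(L,\omega;H)$ of capped $L$-orbits with action confined to a window of length $b-a<\hbar_L(J_0)$: for such generators the connecting strips automatically satisfy $E(u)\le b-a<\hbar_L(J_0)$, and compactness holds. The bound $E(u)\le\|H\|$ that you invoke does appear in the paper (inequality~(\ref{eqn:EuleqH})), but only for the Gromov-type parametrized moduli space of strips for a homotopy from $0$ to $0$ in the homotopy class $[u]=0\in\pi_2(M,L)$; it does not hold for the strips defining a Floer differential.

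The second, and more essential, gap is the invariance claim. Even after filtering, $HF_*^{(a,b]}(L,\omega;H,J_0)$ is \emph{not} isomorphic to $H_*(L;\FFF_2)$ and is \emph{not} invariant under admissible continuation: it vanishes whenever $H$ displaces $L$, whereas for $C^2$-small $H$ it recovers Morse homology. Section~\ref{Section:ConcludingRemarks} of the paper warns against exactly this error (``the different maps described are \emph{not} isomorphisms in general''). Your contradiction (``if $H$ displaces $L$ then $CF=0$, hence $HF=0\neq H_*(L)$'') uses the very invariance that cannot be established. The mechanism the theorem actually needs is a \emph{factorization}, not an isomorphism: one constructs chain-level maps
\[ HM_*(f,g)\xrightarrow{\ \Psi_1\ } HF_*^{(a,b]}(L,\omega;H,J_0)\xrightarrow{\ \Psi_2\ } HM_*(f,g) \]
(by continuation, as in proposition~\ref{prop:2.4}, or by relative PSS maps, as in section~\ref{Section:Kerman'sProof}) and proves that the composition is chain-homotopic to the identity, via a parametrized moduli space and a chain homotopy $\Gamma$ with $\Psi_2\Psi_1-\id=d\Gamma+\Gamma d$. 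From this one concludes $\sharp\bigl(L\cap\varphi_1^H(L)\bigr)\ge\dim CF_*^{(a,b]}\ge\dim HF_*^{(a,b]}\ge\dim HM_*(L)$, and the displacement-energy bound follows by the same factorization applied to any $H$ with $\|H\|<\hbar_L$. You correctly identify the energy threshold and the Morse-theoretic computation, but the bridge from Morse homology to the (non-invariant) localized Floer groups must be a one-sided factorization of the identity, not an isomorphism.
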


The above estimate is optimal, as shown by the following example. Take $L=\SSS^1\times a\SSS^1$ with $a>1$ in the symplectic vector space $\CCC^2$. Chekhanov's theorem implies that its displacement energy is exactly $\pi a^2$. Indeed the holomorphic disks (for the standard complex structure) bounded by $L$ are exactly the maps $z\mapsto (e^{i\theta}z^k, ae^{i\theta'}z^l)$ with $k,l\geq 0$.
\subsection*{Historical comments and contents.}\label{Subsection:HistoricalComments}
In the end of the sixties, V.I. Arnold \cite{Arnold} conjectured that, in a compact symplectic manifold, an exact Lagrangian submanifold $L$ is non-displaceable. In other words, its displacement energy is
infinite, which can be viewed now as a direct consequence of Chekhanov's theorem. In~1985, M. Gromov \cite{Gromov85} answered positively to the question of Arnold for compact and weakly exact symplectic manifolds. Based on Gromov's arguments, L. Polterovich \cite{Polterovich} proved in~1993 that the displacement energy of a rational compact Lagrangian submanifold is greater than or equal to~$a$ where $\omega\pi_2(M,L)=2a\ZZZ$ ($0<a<\infty$). Section~\ref{Section:Gromov'sProof} presents a weak improvement of Gromov's proof, which leads to the first part of Chekhanov's theorem.

A. Floer \cite{Floer88,Floer88bis,Floer88ter} presented a rereading of Gromov's work mixing with homological methods. He introduced the Lagrangian Floer homology, and got the above estimates on the number of intersections in the exact case. The generalisation to any Lagrangian submanifolds requires the vanishing of obstructions defined recursively and taking account the presence of holomorphic disks with non-positive Maslov indices (see \cite{FOOO}). Moreover, if it is well-defined, the Lagrangian Floer homology is zero when $L$ is displaceable. Thus, it seems not to reflect the persistence of Lagrangian intersections under small perturbations, stated by Chekhanov's theorem. This persistence can easily be checked for $C^2$-small Hamiltonians as a direct consequence of Weinstein's neighborhood theorem (see \cite{Weinstein71,Weinstein}).

In 1998, Chekhanov \cite{Chek98,Oh} defined a filtered version of the Lagrangian Floer homology, denoted here $HF_*^{(a,b]}(L,\omega;H,J_0)$. It is perfectly well-defined for a compact Lagrangian submanifold~$L$ provided that $b-a<\hbar_L(J_0)$ for a fixed $\omega$-tame and geometrically bounded almost complex structure $J_0$. Following Chekhanov, when $\|H\|_+\leq b$ and $\|H\|_-<-a$, the continuation maps give:
\[HM_*(L)\rightarrow HF_*^{(a,b]}(L;H,\omega,J_0)\rightarrow HM_*(L)\, ,\]
whose composition is the identity (proposition~\ref{prop:2.4}). The rank-nullity theorem leads to the estimates stated in Theorem \ref{thm:Chekhanov}. Those maps can be defined as a local version of PSS maps, which were first introduced in \cite{PSS} (Piunikhin-Salamon-Schwarz). This different approach, presented in section~\ref{Section:Kerman'sProof}, is due to Kerman \cite{Kerman1} at least for a local version of the Hamiltonian Floer homology.\espace

There exists a slight different approach of displaceability, based on the action selectors \cite{Viterbo,Viterbo2,Hermann}. We de not evoke it within the present paper, which presents in details the three aproaches mentionned above. Here is the \textit{table of contents}.\setcounter{tocdepth}{2} 
\tableofcontents \espace Throughout the paper, gluing and splitting are not explicitly justified. Most transversality arguments are skipped. The reader is referred to \cite{McDSal} for details.
\newpage

\section{Gromov's proof}\label{Section:Gromov'sProof}
This section is devoted to revisiting the celebrated paper \cite{Gromov85} of M. Gromov. Assume $(M,\omega)$ to be geometrically bounded (\cite{AudLaf}, Chapter V, definition 2.2.1):\espace

\textbf{C1 --} There is a Riemannian metric $g$, such that, for some positive constant $C$, the injectivity radius of $g$ is greater than $3/C$ and the sectional curvature of $g$ is less than $C$.\espace

\textbf{C2 --} There exists a smooth almost complex structure $J_0$ such that, for every tangent vectors~$X$, we have: $C\omega(X,J_0X)\geq \|X\|^2$ and $|\omega(X,Y)|\leq C\|X\|.\|Y\|$.\espace

Examples of geometrically bounded symplectic manifolds include symplectic vector spaces, closed symplectic manifolds and cotangent bundles of compact manifolds. More general examples are constructing by adding cones to compact symplectic manifolds with contact type boundaries. The above conditions were already stated in the original work of Gromov. In 1985, Gromov \cite{Gromov85} introduced the holomorphic curves in symplectic topology. He proved that, in a weakly exact\footnote{Weakly exact = The symplectic form $\omega$ vanishes on $\pi_2(M)$.} and geometrically bounded symplectic manifold, a displaceable compact Lagrangian submanifold~$L$ must bound at least one holomorphic disk (\cite{Gromov85}, Section 2.3). The arguments developped there serve to prove
\begin{thm}\label{Thm:Gromov}
Let $L$ be a compact Lagrangian submanifold of a geometrically bounded symplectic manifold $(M,\omega)$. Then, its displacement energy is positive: \[E(L)\geq \hbar_L\, .\]
\end{thm}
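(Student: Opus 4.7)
The plan is to argue by contradiction. Suppose there exists a Hamiltonian $H$ displacing $L$ with $\|H\| < \hbar_L$. By definition of $\hbar_L$ as a supremum, one can pick an $\omega$-tame almost complex structure $J_0 \in \Icc^3(\omega)$ such that $\|H\| < \hbar_L(J_0)$, and up to modifying $J_0$ outside a neighborhood of $L$ one may also assume that $J_0$ realizes the bound in condition \textbf{C2} so that Gromov compactness applies.

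I would then introduce a 1-parameter family of moduli spaces of perturbed $J_0$-holomorphic disks with boundary on $L$. Concretely, consider maps $u : \DDD \to M$ satisfying an inhomogeneous Cauchy--Riemann equation of the form $\dbar_{J_0} u + \tau\,\beta\otimes X_{H_t}(u) = 0$, with $u(\partial\DDD) \subset L$, where $\beta$ is a compactly supported $1$-form on $\DDD$ whose restriction to a collar of $\partial\DDD$ encodes a time parameter $t \in [0,1]$. At $\tau = 0$ the moduli space contains every constant disk mapped to a point of $L$, so it is non-empty. At $\tau = 1$, a gauge change along the collar turns the equation into the honest $\dbar_{J_0}$-equation with boundary values lying alternately on $L$ and $\varphi_1^H(L)$; solutions are then in bijection with points of $L\cap\varphi_1^H(L)$, which is empty by the displacement hypothesis.

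The heart of the argument is the compactness of the total parametrized moduli space $\Mcc=\bigsqcup_{\tau\in[0,1]}\Mcc_\tau$. After generic choices of $J_0$ and $H$ to ensure transversality, $\Mcc$ is a smooth $1$-dimensional cobordism whose boundary at $\tau = 0$ is non-empty and at $\tau = 1$ is empty; hence $\Mcc$ cannot be compact. Geometric boundedness (conditions \textbf{C1}--\textbf{C2}) provides the $C^0$-confinement of families of holomorphic curves, ruling out escape to infinity. Generic dimension counting eliminates sphere bubbles (on a weakly monotone or simply-connected boundary, and a fortiori via transversality on $J_0$) and double-disk breakings. The only remaining source of non-compactness is the bubbling-off of a non-constant $J_0$-holomorphic disk $v : (\DDD,\partial\DDD) \to (M,L)$. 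The standard a priori estimate applied to the perturbed equation bounds the symplectic area of this bubble by the total $H$-energy of the family, which in turn can be arranged to be at most $\|H\|$. This yields $\hbar_L(J_0) \leq \area(v) \leq \|H\|$, contradicting the initial choice of $J_0$.

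The main obstacle is the sharp energy estimate. Bounding the area of the bubble by $\|H\|$ rather than by some larger functional requires a careful design of the $1$-form $\beta$: essentially, one splits $\beta$ into two pieces whose contributions to the action are controlled respectively by $\|H\|_+$ and $\|H\|_-$, and the sum recovers $\|H\|$. A secondary technicality is to verify that the monotonicity and isoperimetric inequalities available under \textbf{C1}--\textbf{C2} provide uniform gradient bounds on solutions of the perturbed equation (not only of the homogeneous one), so that Gromov's compactness argument genuinely applies in the non-compact setting.
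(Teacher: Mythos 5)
Your proposal follows the same overall strategy as the paper's proof: construct a one--parameter family of perturbed holomorphic disks (or strips) with boundary on $L$, use the displacement hypothesis to force the family to be non-compact, deduce that a bubble disk appears, and bound its area by $\|H\|$ using the mean-oscillation energy estimate. The paper works on the strip $\BBB=\RRR\times[0,1]$ with an unbounded parameter $R\geq -1$, and treats the escape $R_n\to\infty$ separately (it produces a genuine Floer continuation strip for $H$, ruled out by Proposition~\ref{prop:Limits}); you replace this with a compact parameter $\tau\in[0,1]$ and use the gauge-change interpretation at $\tau=1$ to see that the boundary of the cobordism is empty there. Both are valid packagings of Gromov's argument, and you correctly identify the choice of the perturbing $1$-form as the crux that lets one obtain $\|H\|$ (rather than, say, $2\|H\|$) in the energy bound; the paper realizes this via the ``dip'' profile $H^R_{s,t}=\beta(s+R)\beta(-s+R)H_t$ and Lemma~\ref{lemma:A3}, which gives $\alpha_+(\HHH)=\|H\|_++\|H\|_-=\|H\|$.

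There is one genuine gap, in the dimension count. For disks in the trivial class of $\pi_2(M,L)$ with boundary on $L$, the Fredholm index is $n$, not $0$: at $\tau=0$ the space $\Mcc_0$ consists of all constant disks and is a copy of $L$, so $\bigsqcup_\tau\Mcc_\tau$ has dimension $n+1$, not $1$. You must cut by a boundary marked-point constraint, fixing $u(z_0)=x_0\in L$ for some $z_0\in\partial\DDD$ and $x_0\in L$; this is precisely the role of the space $\Ncc_L(\{x\},L;\HHH,\JJJ)$ in the paper (equations~(\ref{eqn:DimNLHJ})--(\ref{eqn:DimNLXYHJ})). Without it, the parity argument over $[0,1]$ does not apply. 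A secondary point: at $\tau=1$ the gauge-transformed solutions are $J$-holomorphic bi-gons between $L$ and $\varphi_1^H(L)$, not literally intersection points; that the finiteness of their energy forces their ends to converge to $L\cap\varphi_1^H(L)$ (and hence that the set is empty) is exactly the content of subsubsection~\ref{Subsubsection:Limits} and should be invoked, not assumed.
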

The constant $\hbar_L$ is defined in subsubsection \ref{Subsubsection:ConstanthbarL}. From (\cite{Gromov85}, 2.3-B), the existence of non-constant holomorphic curves is guaranted by the non-existence of solutions of some elliptic equations, as those studied in subsection \ref{Subsection:FloerContinuationStrips}. The proof given in subsection \ref{Subsection:Gromov'sProofRevisited} simply adds estimates on the energies.
\subsection{Reminders on holomorphic curves.}\label{Subsection:HolomorphicCurves}
In the sequel, we will need to perturb $J_0$. Fix $A>C$. Let $\Icc^3_A(J_0)$ stand for the space of almost complex structures $J$ of class $C^3$,

- equal to $J_0$ outside a sufficiently large compact subset of $M$ ;

- and satisfying: $A\omega(X,JX)\geq \|X\|^2$ for all tangent vectors $X$.

\noindent Note that $\Icc^3_A(J_0)$ is a smooth Frechet manifold.
\subsubsection{\!\!\!\!} A compact Riemannian surface $\Sigma$ can be viewed as a compact, orientable real surface, equipped with a complex structure $j$. Given a $\Sigma$-parametrized family\footnote{Id est, a map $\Sigma\rightarrow \Icc_A^3(J_0)$ of class $C^3$.} $\JJJ=\{J_z,\, z\in \Sigma\}$ of almost complex structures in $\Icc^3_A(J_0)$, a \textit{$\JJJ$-holomorphic curve} is a  map $u:\Sigma\rightarrow M$ of class $W^{1,p}$ (with $p>2$) satisfying the {\em Cauchy-Riemann equation} (\cite{McDSal}, section 2.2)
\begin{align}
\dbar_Ju(z)=\frac{1}{2}\left[du(z)+J_z\circ du(z)\circ j\right]=0\,
.\label{eqn:CR}\end{align}
For an introduction to holomorphic curves, see \cite{ABKLR,AudLaf,Humm97,McDSal}. The {\em energy} of $u$ is defined as
\[E(u)=\int_{\Sigma}u^*\omega>0\, .\]

Condition C2 implies that $u^*\omega$ does not vanish on $\Sigma$ (\cite{ABKLR}, section 6.3.2). Upper bounds on the energy yield estimates on the diameter of the holomorphic curve (\cite{AudLaf}, Chapter V, proposition~4.4.1). 
\begin{prop}\label{prop:BoundsEnergy}
There exists a constant $c_0$ independent from $M$, $C$ or $A$, such that the following holds. With the above notations, for a connected $\JJJ$-holomorphic curve $u:\Sigma\rightarrow M$ (possibly with non-empty boundary), we have:
\begin{align} \diam \left[u(\Sigma)\right]\leq \frac{2}{C} \left(1+\frac{AC^2}{c_0} E(u)\right)\, .\label{Eqn:EstimateDiameter}\end{align}
\end{prop}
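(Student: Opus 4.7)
The plan is to derive the diameter bound from a \emph{monotonicity inequality} combined with a packing argument inside $u(\Sigma)$.

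\emph{Step 1: monotonicity.} The key input is the classical monotonicity lemma for $\JJJ$-holomorphic curves in the geometrically bounded setting: there exists a universal constant $c_0 > 0$ such that, for any $p \in u(\Sigma)$ and any radius $r$ with $0 < r \leq 1/C$, the symplectic area of the part of $u$ meeting the geodesic ball $B_g(p,r)$ of radius $r$ satisfies
\[
\int_{u^{-1}(B_g(p,r))} u^*\omega \;\geq\; \frac{c_0\, r^2}{A}.
\]
One can either invoke this from \cite{AudLaf}, or re-prove it by pulling back to a Darboux chart (which exists on scale $1/C$ by C1), comparing $\omega$ to the flat area via the tame constant $A$ from C2, and integrating the differential inequality $\tfrac{d}{dr}\area(u \cap B_g(p,r)) \geq \tfrac{2}{r}\,\area(u \cap B_g(p,r))$ coming from the Cauchy--Riemann equation~(\ref{eqn:CR}).

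\emph{Step 2: packing.} Let $d=\diam[u(\Sigma)]$; by compactness this is achieved by some pair $p_0, q_0 \in u(\Sigma)$. The continuous function $f = d_g(p_0, \cdot)$ on the connected compact set $u(\Sigma)$ has image $[0, d]$ since $f(p_0)=0$ and $f(q_0)=d$. Setting $r = 1/C$, pick for each integer $k$ with $0 \leq 2kr \leq d$ a point $p_k \in u(\Sigma)$ with $f(p_k) = 2kr$; this gives $N = \lfloor d/(2r) \rfloor + 1 \geq d/(2r)$ points. The triangle inequality yields $d_g(p_k, p_\ell) \geq 2r|k - \ell|$, so the open balls $B_g(p_k, r)$ are pairwise disjoint. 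Summing the monotonicity estimate,
\[
E(u) \;\geq\; \sum_{k=0}^{N-1} \int_{u^{-1}(B_g(p_k, r))} u^*\omega \;\geq\; \frac{N c_0 r^2}{A} \;\geq\; \frac{d\, c_0\, r}{2A},
\]
which rearranges to $d \leq 2AC\, E(u)/c_0$, an estimate at least as strong as~(\ref{Eqn:EstimateDiameter}) (the additive $2/C$ in the stated bound is slack coming from the rounding in the packing).

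The real obstacle is hidden in Step 1: tracking that the constant $c_0$ can be chosen independently of $M$, $A$, and $C$ requires a careful rescaling argument, ensuring that the curvature and taming defects become uniformly small on a Darboux chart of radius $1/C$. The packing argument of Step 2 is then purely elementary.
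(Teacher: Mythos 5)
Your plan coincides with the paper's proof (Appendix \ref{Subsubsection:EstimatesEnergies0}): combine a monotonicity lower bound on area inside balls of radius $1/C$ with a packing argument. The paper splits Step~1 into Lemma~\ref{lemma:5.1} (symplectic energy versus Riemannian area, producing the factor $1/A$) and Lemma~\ref{lemma:5.2} (Viterbo's minimal-surface monotonicity, producing the factor $c_0 r^2$), which you have fused into one statement; that fusion is fine.

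However, there is a genuine gap in Step~1: the monotonicity inequality as you state it --- for \emph{any} $p\in u(\Sigma)$ and $r\leq 1/C$ the area in $B_g(p,r)$ is at least $c_0 r^2/A$ --- is false without the additional hypothesis that $u(\partial\Sigma)\cap B_g(p,r)=\emptyset$. If $p$ is close to the boundary image, the piece of curve in $B_g(p,r)$ may be an arbitrarily small boundary cap and the differential inequality $a'(s)\geq (2/s)\,a(s)$ fails, since the boundary of $u^{-1}(B_g(p,s))$ then includes part of $\partial\Sigma$ and the cone comparison underlying the monotonicity does not apply. This hypothesis appears explicitly in Lemma~\ref{lemma:5.2} ($v(\partial\Sigma)\cap B(x,r)=\emptyset$). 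Correspondingly, the packing in your Step~2 does not automatically meet it: picking $p_k$ at distance level sets $f(p_k)=2kr$ gives pairwise disjoint balls, but nothing prevents some of those balls (in particular the extremal ones, near $p_0$ or $q_0$) from meeting $u(\partial\Sigma)$. The paper handles this by choosing the packing points so that the balls avoid $u(\partial\Sigma)$ and by sacrificing one ball, which is exactly why the bound comes with the additive $2/C$ and only $N=\lceil Cd/2\rceil-1$ balls. Your claimed sharper estimate $d\leq 2AC\,E(u)/c_0$ is therefore not actually obtainable by this argument for curves with non-empty boundary; the $2/C$ is not just rounding slack.
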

\noindent Its proof is postponed to Appendix \ref{Subsubsection:EstimatesEnergies0}.
\subsubsection{\!\!\!\!}\label{Subsubsection:ConstanthbarL} For any $J\in \Icc^3_A(J_0)$, a $J$-holomorphic curve is at least of class $C^3$. For $\alpha\in \pi_2(M)$, set
\[\Scc(\alpha,J)=\left\{u:\SSS^2\rightarrow M,\, \dbar_Ju=0,\, [u]=\alpha\right\}\, .\]
For a generic choice of $J$, the space $\Scc(\alpha,J)$ is a submanifold of $W^{1,p}(\SSS^2,M)$ (with $p>2$) of dimension $2n+2c_1(\alpha)$ (\cite{McDSal2}, chapter~3). Here, $c_1$ denotes the Chern class associated to $J$, but depends only on $\omega$. The symplectic manifold $(M,\omega)$ is said to be {\em semipositive} (\cite{McDSal}, subsection~6.4) when $3-n\leq c_1(\alpha)\leq 0$ implies $\omega(\alpha)\leq 0$.
\begin{lemma} Let $L$ be a compact Lagrangian submanifold of $(M,\omega)$. Then, the infinimum $\hbar_L(J)$ of the energies of $J$-holomorphic disks bounded by $L$ is positive.\end{lemma}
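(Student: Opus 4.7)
The plan is to argue by contradiction. Suppose $\hbar_L(J)=0$, so there exists a sequence of non-constant $J$-holomorphic disks $u_n\colon(\DDD,\partial\DDD)\to(M,L)$ with $E(u_n)\to 0$; I will produce a contradiction by showing that for $n$ large $u_n$ must have vanishing symplectic area. The two ingredients are a localization of the image $u_n(\DDD)$ near $L$ via monotonicity of the $\omega$-area of $J$-holomorphic curves, and the exactness of $\omega$ in a Weinstein neighborhood of $L$.

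First I would localize the image near $L$. The monotonicity inequality for $J$-holomorphic curves in a geometrically bounded symplectic manifold (the same type of isoperimetric analysis that underlies Proposition~\ref{prop:BoundsEnergy}) provides constants $r_0,c>0$ depending only on $(M,\omega,J)$ such that any $J$-holomorphic curve passing through a point $p\in M$ and proper in a ball $B(p,r)$ with $r\leq r_0$ has symplectic area in that ball at least $c\,r^2$. For an interior point $p=u_n(z)$ of $u_n(\DDD)$, take $r=\min(r_0,d(p,L))$; the ball $B(p,r)$ is disjoint from $L$, so the connected component of $u_n^{-1}(B(p,r))$ containing $z$ has its boundary in $u_n^{-1}(\partial B(p,r))$ (it cannot touch $\partial\DDD$, which maps into $L$), and monotonicity gives
\[ E(u_n)\geq c\,\min\bigl(r_0,d(p,L)\bigr)^2. \]
Once $E(u_n)<c\,r_0^2$ this forces $d(p,L)\leq\sqrt{E(u_n)/c}$ uniformly in $p$, so $u_n(\DDD)\subset N_{\delta_n}(L)$ with $\delta_n\to 0$.

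Next I would invoke Weinstein's tubular neighborhood theorem to identify a neighborhood $U$ of $L$ in $M$ with a neighborhood of the zero section in $T^*L$; then $\omega|_U=d\lambda$ for a $1$-form $\lambda$ with $\lambda|_L=0$. For $n$ large the previous step yields $u_n(\DDD)\subset U$, and Stokes' formula gives
\[ E(u_n)=\int_\DDD u_n^*d\lambda=\int_{\partial\DDD}u_n^*\lambda=0, \]
since $u_n(\partial\DDD)\subset L$. Condition~C2 makes $u_n^*\omega$ strictly positive wherever $du_n\neq 0$, so any non-constant $J$-holomorphic disk has strictly positive energy, which yields the desired contradiction. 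The main technical input is the monotonicity inequality; however, this is by now a classical estimate (see for instance \cite{AudLaf}, Chapter~V, and \cite{McDSal}, Section~4.3) fully parallel to what is used for Proposition~\ref{prop:BoundsEnergy}, so no further obstacle remains.
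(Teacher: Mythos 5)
Your proof is correct and uses the same two ingredients as the paper's: a monotonicity/diameter estimate for $J$-holomorphic curves (the same tool as Proposition~\ref{prop:BoundsEnergy}) to show that small-energy disks stay near $L$, and the relative exactness of $\omega$ in a tubular (Weinstein) neighborhood of $L$ to conclude that such a disk has zero energy. The paper just runs the argument directly---using Proposition~\ref{prop:BoundsEnergy} together with the fact that a non-constant disk must exit $V_r(L)$ to get an explicit lower bound on $E(u)$---whereas you phrase it by contradiction; the content is the same.
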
 
\begin{proof}
The tubular neighborhood theorem asserts that $V_r(L)=\left\{x\in M,\, d(x,L)\leq r\right\}$ contracts onto~$L$ for $r>0$ sufficiently small. Let $u:(\DDD,\partial \DDD)\rightarrow (M,L)$ be a non-constant $J$-holomorphic disk bounded by $L$. The image of $u$ cannot be contained in $V_r(L)$ as its energy is $\omega(u)\neq 0$. Thus, there exists $z\in \DDD$ such that $u(z)\in \partial V_{r}(L)$. Applying Proposition \ref{prop:BoundsEnergy} gives
\[E(u)\geq \frac{c_0}{2AC^2}\left(r- \frac{2}{C } \right) \, .\]
Here, the constant $C$ can be fixed sufficiently large so that $Cr>2$, as $c_0$ is independent from $C$. The lemma is established.\end{proof}
As a consequence, the constant $\hbar_L$ appearing in theorems \ref{thm:Chekhanov} and \ref{Thm:Gromov} is positive. Moreover, the map $J\mapsto \hbar_L(J)$ is lower semi-continuous.
\subsection{Floer continuation strips.}\label{Subsection:FloerContinuationStrips}
\subsubsection{\!\!\!\!} An $L$-connecting path is a continuous map $x:[0,1]\rightarrow M$ satisfying the boundary conditions $x(0),x(1)\in L$. It is said to be contractible when $[x]=0\in \pi_1(M,L)$. When $x$ is a trajectory of $X_H$ for a Hamiltonian $H:M\times [0,1]\rightarrow \RRR$, the path $x$ is called an {\em $L$-orbit} of $H$. Proving the persistence of intersections under the Hamiltonian flow of $H$ amounts to detecting $L$-orbits. The proof given in subsection \ref{Subsection:Gromov'sProofRevisited} requires a Hamiltonian perturbation on the Cauchy-Riemann equation (\ref{eqn:CR}).
Let $H_{\pm}:\SSS^1\times M\rightarrow M$ be two Hamiltonians. Given a compact homotopy\footnote{Following Kerman \cite{Kerman1,Kerman2},
a compact homotopy is a $\RRR$-parameterized path in a functional space, locally constant at infinity. Here, the functional space is $C^{3}_c(M,\RRR_+)\times \Icc_A^3(J_0)$. Note that the union of the supports of the different Hamiltonians $H_s$ is compact.} $(H_s,J_s)$ from $(H_-,J_-)$ to $(H_+,J_+)$, a {\em Floer continuation strip} $u:\BBB\rightarrow M$ is a solution with finite energy of the
Floer equation (\cite{HoferSalamon}, equation (2))
\begin{align}\partial_su+J_{s,t}(u)\left[\partial_tu-X_{s,t}(u)\right]=0\, ,\label{eqn:Floer}\end{align}
with the boundary conditions
\[u(\partial\BBB)\subset L\, .\]
Here, $X_{s,t}$ denotes the Hamiltonian vector field associated to $H_{s,t}$. The energy of $u$ is defined as 
\[E(u)=\int_{-\infty}^{+\infty}\int_0^1{|\partial_su|}^2\, dtds\, .\]
\begin{prop}\label{prop:LocalisationFloerStrips}
Fix $L$, $J_0$, $\{H_s\}$ and $\{J_s\}$ as above. Assume that the compact homotopy $\{J_s\}$ lies in $\Icc^2_A(J_0)$. For a Floer continuation strip $u$,
\[u(\BBB)\subset V_R(K)\quad \mbox{ where } \quad R=\frac{2}{C}\left[1+\frac{AC^2}{c_0}E(u)\right]\, ,\]
and the compact $K$ is the union of $L$ and the supports of all the Hamiltonians $H_s$.
\end{prop}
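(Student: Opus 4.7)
My plan is to exploit the fact that outside the compact set $K$, all the Hamiltonians $H_s$ vanish identically, so the associated vector fields $X_{s,t}$ vanish as well. Hence, on the open set $u^{-1}(M\setminus K)$ the Floer equation (\ref{eqn:Floer}) reduces to the pure Cauchy–Riemann equation $\dbar_{\JJJ}u = 0$ for the parametrised family $\JJJ = \{J_{s,t}\}$ lying in $\Icc^2_A(J_0)$. Thus, on each connected component $\Omega$ of $u^{-1}(M\setminus K)$, the restriction $u|_{\Omega}$ is genuinely $\JJJ$-holomorphic, which puts us in position to apply the diameter bound of Proposition \ref{prop:BoundsEnergy}.

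Since $u(\partial\BBB)\subset L\subset K$, such a component $\Omega$ lies in the interior of $\BBB$, its topological boundary in $\BBB$ is non-empty (otherwise $\Omega=\BBB$, contradicting the boundary condition), and by continuity $u(\partial\Omega)\subset \partial K$. Finite energy additionally forces the strip to converge uniformly as $s\to\pm\infty$ to asymptotic $L$-orbits $x_{\pm}$ of $H_\pm$; a short argument using the ODE $\dot x = X_{H_\pm}(x)$ shows that every such orbit takes values in $K$ (either the orbit meets the complement of $\supp H_\pm$ at some time, whereupon uniqueness of the flow makes it constant in $L$, or it remains entirely inside $\supp H_\pm\subset K$). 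Consequently all accumulation and limit points of $u(\Omega)$ belong to $K$.

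Then I apply Proposition \ref{prop:BoundsEnergy} to $u|_{\Omega}$, exhausting $\Omega$ if necessary by the compact subsurfaces $\Omega\cap \BBB_N$ and taking the supremum of the resulting diameter bounds. Because $E(u|_\Omega)\leq E(u)$, this gives $\diam u(\Omega)\leq R$. Combined with the previous step, every point of $u(\Omega)$ lies within distance $R$ of a point of $K$, so $u(\Omega)\subset V_R(K)$. Running the argument over every component of $u^{-1}(M\setminus K)$ and observing that $u^{-1}(K)\subset V_R(K)$ tautologically, we conclude $u(\BBB)\subset V_R(K)$.

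The main obstacle is the control of $\Omega$ near $s=\pm\infty$: the component need not be bounded in $\BBB$, and one has to combine the boundary condition along $\partial\BBB$ with the asymptotic analysis of finite-energy Floer strips to ensure that $u(\Omega)$ clusters against $K$ and that Proposition \ref{prop:BoundsEnergy} — stated for compact Riemann surfaces — transfers cleanly to $\Omega$ through the exhaustion. Everything else is a direct consequence of the fact that the Floer equation degenerates to the Cauchy–Riemann equation in the region where we need the estimate.
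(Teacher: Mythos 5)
Your core argument is correct and follows the paper's own proof closely: decompose $u^{-1}(M\setminus K)$ into connected components, observe that the Floer equation degenerates to the parametrised Cauchy--Riemann equation there since the Hamiltonian vector fields vanish outside $K$, bound the symplectic energy of the restriction by $E(u)$, exhaust each component by regular compact subsurfaces and apply Proposition~\ref{prop:BoundsEnergy}.

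Two remarks, however. First, your detour through the asymptotic $L$-orbits at $s\to\pm\infty$ is both unnecessary and dangerously circular: the asymptotic analysis of subsubsection~\ref{Subsubsection:Limits} explicitly invokes Proposition~\ref{prop:LocalisationFloerStrips} itself (to localise the curves $v_s$ in a compact set before applying Arzel\`a--Ascoli), so it cannot be used as an input to the present proof. Moreover, without a non-degeneracy hypothesis one only gets \emph{subsequential} convergence of the slices $u(s_n,\cdot)$, not the uniform convergence you assert. Fortunately none of this is needed: the observation you make in the preceding sentence already closes the argument. Since $u(\partial\BBB)\subset L\subset K$, each component $\Omega$ is a proper open subset of the connected space $\BBB$, hence $\partial\Omega\neq\emptyset$, and any $z^*\in\partial\Omega$ satisfies $u(z^*)\in\partial K\subset K$. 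Combined with $\diam u(\Omega)\le R$, this directly yields $u(\Omega)\subset V_R(K)$ with no reference to the behaviour at infinity.

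Second, a small point of precision: Proposition~\ref{prop:BoundsEnergy} controls $\diam[u(\Sigma_n)]$ for the compact pieces $\Sigma_n$ of your exhaustion, and the $\Sigma_n$ should be chosen regular (with smooth boundary) rather than simply $\Omega\cap\BBB_N$, as the latter closures may fail to be smooth surfaces; this is the phrasing the paper uses and it costs nothing. With these adjustments the proof is the same as the paper's.
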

\noindent We have set 
\[V_R(K)=\left\{x\in M,\, \exists y\in K,\, d_M(x,y)\leq R\right\}\, .\]
\begin{proof}
Each connected component $U$ of $u^{-1}(M-K)$ is the increasing union of connected regular open subsets $\Sigma_n$ (i.e., with smooth boundaries). The restriction of $u$ to $\Sigma_n$ is simply a $\{J_{t}\}$-holomorphic curve $v_n$ and $v_n^*\omega={\left|\partial_sv_n\right|}^2ds\wedge dt$. Thus, its energy is less than $E(u)$. Proposition~\ref{prop:BoundsEnergy} gives:
\[d(u(z),u(\partial \Sigma_n))\leq R\]
for all $z\in \Sigma_n$. Thus, $d(u(z),K)\leq R+\epsilon$ for $z\in U$ where $\epsilon>0$ is as small as we want.\end{proof}
\subsubsection{Limits at $\pm\infty$.}\label{Subsubsection:Limits} Let $u$ be a Floer continuation strip for the compact homotopy $(H_s,J_s)$. As $\{H_s\}$ goes from~$0$ to~$0$, there exists $S>0$ such that $H_{\pm s}=0$ and $J_{\pm s}=J_{\pm}$ for~$s>S$. Let $\{\varphi_t^+\}$ be the Hamiltonian isotopy defined by $H_+$, and set $u(s,t)=\varphi_t^+\circ v_+(s,t)$. Then, $v_+$ is $J_+$-holomorphic on the half-band $(S,\infty)\times [0,1]$.

Let us consider the curves $v_s:t\mapsto v(s,t)$. From (\cite{McDSal}, lemma 4.3.1), there exists a constant $C>0$ such that
\[\mbox{length } (v_s)\leq C E(v|(s-1,\infty)\times [0,1])\, .\]
From a straightforward computation, the energy of $v$ on $(s-1,\infty)\times [0,1]$ equals the energy of $u$ on the same domain, for $s>S+1$. Thus, the lengths of $v_s$ go to zero when $s\rightarrow \infty$. Moreover, they take values inside a compact subset of $M$ (proposition~\ref{prop:LocalisationFloerStrips}). From Arzela-Ascoli's theorem, there exists a subsequence $\left(v_{s_n}\right)$ converging to a constant curve $x$. Necessarly, $x$ belongs to $L$ as a limit of $u(s_n,0)$. It then follows that $\left(u_{s_n}:t\mapsto u(s_n,t)\right)$ converges to the curve $t\mapsto \varphi_t(x)$, which is an $L$-orbit of $H_+$. See also (\cite{Salamon}, proposition 1.21).
\begin{prop}\label{prop:Limits} 
Assume the Hamiltonian $H_+$ to displace $L$. Then there is no Floer continuation strip for any compact homotopy $(H_s,J_s)$, where $\{H_s\}$ ends at $H_+$.
\end{prop}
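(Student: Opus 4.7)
The plan is to argue by contradiction, and the work has essentially been done in the preceding subsubsection on limits at $\pm\infty$: it only remains to observe that the limiting $L$-orbit of $H_+$ forces a point of intersection between $L$ and $\varphi_1^+(L)$.

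Concretely, I would suppose that a Floer continuation strip $u:\BBB\to M$ exists for some compact homotopy $(H_s,J_s)$ terminating at $(H_+,J_+)$. Since $u$ has finite energy, the tail energy $E(u|_{(s-1,\infty)\times[0,1]})$ tends to $0$ as $s\to\infty$. Combined with Proposition~\ref{prop:LocalisationFloerStrips}, which localises the image of $u$ inside a compact subset of $M$, the analysis recalled in subsubsection~\ref{Subsubsection:Limits} (via McDuff--Salamon's length estimate and Arzel\`a--Ascoli) produces a sequence $s_n\to+\infty$ such that the loops $u_{s_n}:t\mapsto u(s_n,t)$ converge uniformly to $t\mapsto\varphi_t^+(x)$ for some $x\in L$, the point $x$ arising as the limit of $u(s_n,0)\in L$.

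The key additional remark is that the other endpoint must also land in $L$: since $u(s_n,1)\in L$ for every $n$ and $L$ is compact (hence closed), passing to the limit in $u(s_n,1)\to\varphi_1^+(x)$ gives $\varphi_1^+(x)\in L$. Thus $\varphi_1^+(x)\in L\cap\varphi_1^+(L)$, which contradicts the assumption that $H_+$ displaces $L$. Hence no such Floer continuation strip exists.

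There is no real obstacle in this proof: all the analytic content (uniform exponential-type decay of the loops $u_s$, compactness of the image, convergence to a Hamiltonian orbit with one endpoint on $L$) is supplied by subsubsection~\ref{Subsubsection:Limits}. The only delicate point to underline is that the Floer boundary condition $u(\partial\BBB)\subset L$ is preserved in the limit on both components of $\partial\BBB$, which simply uses the closedness of $L$ in $M$.
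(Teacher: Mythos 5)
Your argument is correct and is exactly the proof the paper has in mind: Proposition~\ref{prop:Limits} is stated as an immediate corollary of the analysis in subsubsection~\ref{Subsubsection:Limits}, which already shows that the curves $u_{s_n}:t\mapsto u(s_n,t)$ converge to $t\mapsto\varphi_t^+(x)$ with $x\in L$, and the paper implicitly uses the closedness of $L$ to conclude that this limit is an $L$-orbit, i.e.\ that $\varphi_1^+(x)\in L$. You have simply made that last step explicit, which is the right thing to do.
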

Sometimes, the curves $(s\mapsto u(s,t))$ converge to $L$-orbits $x_{\pm}$ of $H_{\pm}$ when $s\rightarrow \pm\infty$. (This is the case when $H_+$ and $H_-$ meets generic conditions, to be stated in subsubsection \ref{Subsubsection:ConleyZehnder}.) The strip $u$ is said to \textit{go from $x_-$ to $x_+$.} (See \cite{HoferSalamon,McDSal,Salamon} for details.)\espace

From (\cite{Gromov85}, section 2.3.B), the displaceability of $L$ implies the existence of a non-constant holomorphic disk $u$. In particular, $\omega(u)$ is positive, and then $L$ is not exact. The proof below can be seen a refinement of this argument.

\subsection{Proof of theorem \ref{Thm:Gromov}.}\label{Subsection:Gromov'sProofRevisited}
Fix a Hamiltonian $H:M\times [0,1]\rightarrow M$ which displaces $L$. Take $J_0$ so that $\hbar_L<\hbar_L(J_0)+\epsilon$ where $\epsilon>0$ is arbitrarily small. Fix a non-decreasing smooth function $\beta:\RRR\rightarrow [0,1]$ equal to~$0$ for~$s<-1$ and such that $\beta(s)+\beta(-s)=1$. Consider the compact homotopies from~$0$ to~$0$:
\[H_{s,t}^R=\beta(s+R)\beta(-s+R)H_t\]
with $R\in \RRR$, and denote by $X_{s,t}^R$ the associated Hamiltonian vector field. Complete it by a one-parameter family $\JJJ=\{J^R_{s,t},\, R\in \RRR\}$ of compact homotopies from $J_0$ to $J_0$. Assume $J^R_{s,t}=J_0$ for $R<-1$ and $J^r_{s,t}=J_t$ for $|s|<R-1$ and $|s|>R+1$. Here, $J_{s,t}^R\in \Icc_A^2(J_0)$ with $A>C$ sufficiently large. Moreover, assume\footnote{Where $J_z^R=J_{s,t}^R$ for $z=s+it$.} $\hbar_L<\hbar_L(J_z^R)+\epsilon$ for all $z$ and $R$.\espace
\begin{figure}
\centering
\includegraphics[height=65mm]{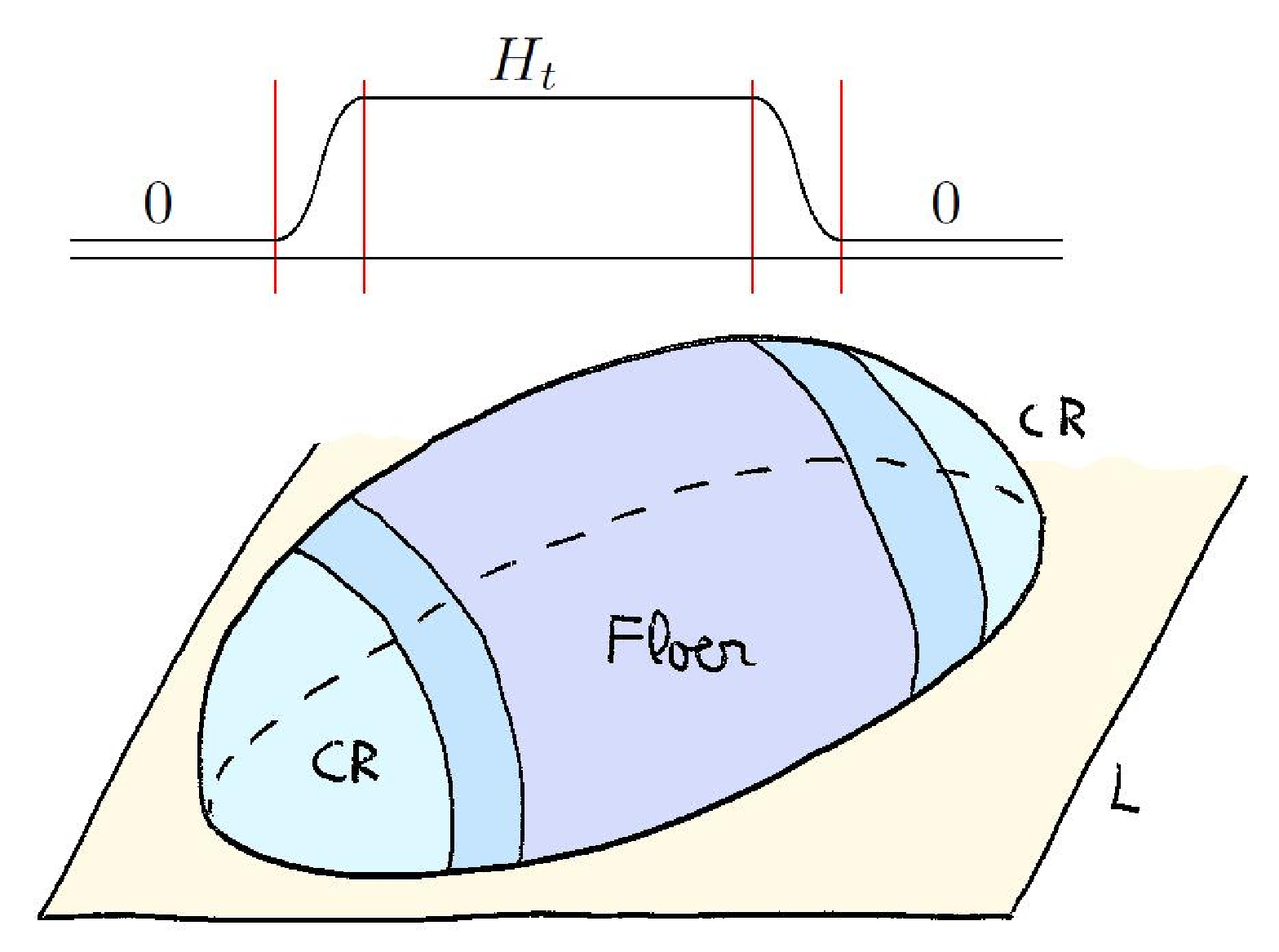}
\caption{An element of $\Ncc_L(\HHH,\JJJ)$}\label{Fig:ElementNcLHJ}
\end{figure}
For submanifolds $X$ and $Y$ of $L$, we introduce the following spaces
\begin{align}
\Ncc_L(\HHH,\JJJ) & =\left\{R,u:\BBB\rightarrow M,\,\mbox{ st } \begin{matrix}
\partial_su+J_{s,t}^r(u)\left[\partial_tu-X_{s,t}^R(u)\right]=0\\ \lim_{s\rightarrow -\infty}u(s,t)=x\\ \lim_{s\rightarrow +\infty}u(s,t)=y\\ [u]=0\in\pi_2(M,L)
\end{matrix}\right\}\, ,\label{eqn:NcHJ}\\ 
\Ncc_L(X,Y;\HHH,\JJJ) & = \left\{(R,u,x,y)\in \Ncc(H,\JJJ),\, \mbox{ st } x\in X, \, y\in Y\right\}\, ,\label{eqn:NcXYHJ} \end{align}
endowed with the topology of uniform $C^{2}$-convergence on compact subsets of $\BBB$.  For a pair $(R,u)$ in $\Ncc_L(\HHH,\JJJ)$, Existence of limits at $\pm\infty$ guarantee $E(u)<\infty$ (proposition \ref{prop:Limits}). Lemma \ref{lemma:A3} gives:
\begin{align} E(u)\leq \|H\|\, . \label{eqn:EuleqH} \end{align}
\subsubsection{Proof.}\label{Subsubsection:ProofGromovOutline} The linearization at $(R,u)$ of equation appearing in (\ref{eqn:NcHJ}) defines a Fredholm linear map $D_u:\RRR\oplus W^{1,p}(u^*TM)\rightarrow L^p(u^*TM)$. More precisely, it is the sum of the Cauchy-Riemann operator and a compact operator depending on $X^R_{s,t}$. As~$u$ is homotopic rel~$L$ to a constant disk, its index is necessarly $n+1$. For a precise computation, see \cite{RobbinSalamon1,RobbinSalamon2} or (\cite{Gromov85}, section~2.1). Thus, the expected dimensions are:
\begin{align}
\dim \Ncc_L(\HHH,\JJJ) & =n+1\, , \label{eqn:DimNLHJ}\\
\dim \Ncc_L(X,Y;\HHH,\JJJ) & = 1+\dim X+\dim Y-n\, .\label{eqn:DimNLXYHJ}
\end{align}
The open subset $\{R<-1\}$ of $\Ncc_L(X,Y;\HHH,\JJJ)$ is readily $X\cap Y\times (-\infty,-1)$. The submanifolds $X$ and $Y$ of $L$ are assumed to have transverse intersections.
\begin{lemma}\label{Lemma:SmoothMnflds} Let $H$ be an Hamiltonian of class $C^l$. Assume $\JJJ$ to be a generic family of class $C^l$. 
\begin{itemize}
\item If $l\geq n+3$, then  the space $\Ncc_L(\HHH,\JJJ)$ is in a natural way a manifold with the expected dimensions (\ref{eqn:DimNLHJ}) ;
\item If $l\geq 3$, then the space $\Ncc_L(X,Y;\HHH,\JJJ)$ is a manifold of dimension (\ref{eqn:DimNLXYHJ}), for a generic choice of $\JJJ$.
\end{itemize}
\end{lemma}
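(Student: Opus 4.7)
The plan is to follow the standard Floer-theoretic strategy: build a universal moduli space parametrized by $\JJJ$, verify surjectivity of a global linearization, and invoke Sard--Smale to extract a residual set of $\JJJ$ for which each fiber is a manifold. Fix $p>2$ and exponential weights at $\pm\infty$; let $\Bcc$ be the Banach manifold of $W^{1,p}_{\loc}$-maps $u:\BBB\to M$ with $u(\partial\BBB)\subset L$ converging exponentially to contractible $L$-orbits at $\pm\infty$, and let $\Jcc^l$ be the Banach manifold of compact $C^l$-homotopies $\JJJ=\{J^R_{s,t}\}_{R\in\RRR}$ in $\Icc^2_A(J_0)$. The Floer section
\[\Fcc(R,u,\JJJ)=\partial_s u+J^R_{s,t}(u)\bigl[\partial_t u-X^R_{s,t}(u)\bigr]\]
is smooth on the $L^p$-Banach bundle $\Ecc\to\RRR\times\Bcc\times\Jcc^l$, and its zero set is the universal moduli space. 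For $\Ncc_L(X,Y;\HHH,\JJJ)$ one further constrains the asymptotic data to $X\times Y$, a finite-codimensional submanifold of the asymptotic slice of $\Bcc$.

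At a zero of $\Fcc$, the vertical linearization $D_{(R,u)}\Fcc$ is a compact perturbation of a Cauchy--Riemann operator with totally real boundary conditions on $u^*TM$. Since $[u]=0\in\pi_2(M,L)$ the associated Maslov index vanishes, so by the Riemann--Roch formula for bordered surfaces (see Robbin--Salamon) this operator is Fredholm of index $n$; the $\RRR$-factor lifts the index to $n+1$, matching (\ref{eqn:DimNLHJ}). Pinning the asymptotic ends to $X$ and $Y$ imposes a codimension $(n-\dim X)+(n-\dim Y)$ constraint, dropping the index to $1+\dim X+\dim Y-n$, matching (\ref{eqn:DimNLXYHJ}).

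The analytic core is to show that the total linearization $D\Fcc$, including variations of $\JJJ$, is surjective at every zero. Suppose $\eta\in L^q(u^*TM)$ annihilates the image. Variations of $J^R_{s,t}$ of class $C^l$ supported in a small ball around an interior somewhere-injective point $z_0$ of $u$---a point where $\partial_su\neq 0$ and $u^{-1}(u(z_0))=\{z_0\}$ locally---already contribute to $\im D\Fcc$ and force $\eta(z_0)=0$; unique continuation across $\partial\BBB$ then gives $\eta\equiv 0$. Existence of such $z_0$ reduces to the non-triviality of $u$ in the $s$-variable (the trivial-strip regime $R\ll-1$ being handled transversally by the $R$-parameter alone) combined with the somewhere-injectivity theorem for pseudo-holomorphic strips with Lagrangian boundary, in the spirit of Floer--Hofer--Salamon.

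Surjectivity in hand, the implicit function theorem promotes $\Fcc^{-1}(0)$ to a $C^{l-1}$ Banach submanifold, and the projection onto $\Jcc^l$ is Fredholm. Sard--Smale then produces a Baire-dense set of regular $\JJJ$, provided $l-1$ strictly exceeds the Fredholm index of the projection. For $\Ncc_L(\HHH,\JJJ)$ the index equals $n+1$, yielding $l\geq n+3$; for $\Ncc_L(X,Y;\HHH,\JJJ)$ the index is $1+\dim X+\dim Y-n$, which in the cases of interest is small enough that $l\geq 3$ is enough, transversality of $X$ and $Y$ in $L$ ensuring that the evaluation constraints cut out a submanifold of the correct dimension. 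The main obstacle is precisely the somewhere-injectivity step of the transversality argument: ruling out that $u$ factors through a non-trivial branched cover and controlling the Lagrangian boundary when applying unique continuation. The remaining ingredients---the Banach structure on $\Jcc^l$, the Riemann--Roch index count, and the Sard--Smale application---are standard.
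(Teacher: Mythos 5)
Your proposal follows essentially the same route as the paper: set up the universal moduli space $\Fcc^{-1}(0)\subset\RRR\times\Ncc_L^p\times\Icc^l$, show it is a $C^{l-1}$ Banach manifold via surjectivity of the full linearization and the implicit function theorem, then apply Sard--Smale to the projection onto $\Icc^l$, the regularity bound $l-1\geq\mathrm{index}+1$ yielding $l\geq n+3$ for $\Ncc_L(\HHH,\JJJ)$ and $l\geq 3$ for $\Ncc_L(\{x\},L;\HHH,\JJJ)$ (index $1$). The only substantive difference is emphasis: you make explicit the somewhere-injectivity and unique-continuation step behind surjectivity of $D\Fcc$, whereas the paper delegates that to McDuff--Salamon and instead spends more effort setting up the weighted $W^{1,p}$ Banach charts near the asymptotics and the evaluation map $\mathrm{ev}$ used to cut out the constrained space; both are standard and compatible.
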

The proof of the above lemma is postponed to the next subsection.\espace
\begin{figure}
\centering
\begin{picture}(20,11)
\put(0,1){\vector(1,0){20}} \put(19,0){$\RRR$}
{\red \put(0,6){\line(1,0){7}} \put(3,6.2){$S$} \put(2,6){\vector(1,0){1.5}}
\qbezier(7,6)(8,6)(9.5,7) \qbezier(9.5,7)(11,8)(12,7.5)
\qbezier(12.3,7.35)(15,6)(14,4) \qbezier(14,4)(12,0)(19,3) \put(18,3.5){Limit ?}} {\blue \put(12,9){A compact component, ignore it.}
\qbezier(12,7.5)(13,8.5)(14,8.5) \qbezier(14,8.5)(23,8.5)(14.2,4.6)
\qbezier(13.8,4.4)(12.4,3.7)(11.1,4.2)
\qbezier(10.8,4.3)(9.5,4.8)(9.5,3) \qbezier(9.5,3)(9.5,1.4)(10,1.4)
\qbezier(10,1.4)(10.5,1.4)(11,4.2) \qbezier(11,4.2)(11.5,7)(12,7.5)}
{\put(7,0.5){\line(0,1){10.5}} \put(6,0){$-1$}}
\end{picture}
\caption{A formal représentation of $\Ncc_L(x,L;\HHH,\JJJ)$}\label{fig:RepresentationNcxL}
\end{figure}

Assume $X=\{x\}$ and $Y=L$. Then, $\Ncc(x,L;\HHH,\JJJ)$ is a one-dimensional manifold. The first projection $(R,u,x,y)\mapsto R$ is at least continuous. The open subset $\{R<-1\}$ is the set $(-\infty,-1)\times \{x\}$ where $x$ is viewed as the constant disk equal to $x$. Set $S$ for its connected component. See figure \ref{fig:RepresentationNcxL}. Following $S$ (the red line), one gets a non-convergent sequence $(R_n,u_n)$ of $S$. But estimate (\ref{eqn:EuleqH}) and proposition~\ref{prop:BoundsEnergy} imply that the images of $u$ lie in the compact $V_S(K)$ where $S=\frac{2}{C}\left[1+\frac{AC^2}{c_0}\|H\|\right]$. Thus, Chapters $4$ and $12$ of \cite{McDSal2} show that the sequence $(u_n)$ admits a subsequence, still denoted by $(u_n)$, converging $C^{2}$-uniformly on compact subsets of $\BBB-F$. Here, $F$ is a finite subset of $\BBB$ where bubbling off of holomorphic spheres or disks can occur.
\begin{itemize}
\item If $R_n\rightarrow \infty$, the limit $v:(\BBB-F,\partial \BBB)\rightarrow (M,L)$ satisfies the Floer equation \begin{align}\partial_sv+J_t\left[\partial_tv-X_t\right]=0 \label{eqn:Floer eqn}\end{align} As $v$ is of finite energy, the singularities can be removed (\cite{McDSal2}, Chapter 4), and $v$ can be smoothly extended to a Floer continuation strip for the constant homotopy $(H_t,J_t)$. Once again, as $v$ is of finite energy, a subsequence of $\left(t\mapsto v(s,t)\right)$ for $s\rightarrow \infty$ admits a limit $x^+$, which is an $L$-orbit of $H$ (proposition \ref{prop:Limits}). As the Hamiltonian $H$ displaces $L$, such a Floer continuation strip $v$ cannot exist. 
\item Thus, the sequence $(R_n)$ is bounded, and we may assume $R_n\rightarrow R\geq -1$. In this case, as $(u_n)$ does not converge in $S$, there must be a bubbling off of at least one $J_z^R$-holomorphic sphere or disk, with $z\in \BBB$. Holomorphic spheres can be avoided by generic data. Up to removable singularities, this holomorphic disk $u$ is obtained as a limit of $w\mapsto u_n(z_n+\rho_n w)$ with well-chosen sequences $z_n\rightarrow z$ and $\rho_n\rightarrow 0$ (see~\cite{Salamon}). Thus, \[\hbar_L-\epsilon\leq\hbar_L(J_z^r)\leq E(u)\leq \limsup E(u_n)\leq \|H\|\, .\]
\end{itemize}
\noindent Take the infinimum on $\epsilon>0$, and afterwards, the infinimum on $H$ displacing $L$. Hence, we get:
\[\hbar_L\leq E(L)\, .\]
\subsubsection{Proof of Lemma \ref{Lemma:SmoothMnflds}.}\label{Subsection:ProofOfLemma} 
Lemma \ref{Lemma:SmoothMnflds} lies on a well-known transversality argument, given for instance in \cite{Floer88,Salamon,McDSal}. But we have to chek that  the conditions required on $\JJJ$ can be satisfied. It requires the following version of Sard's theorem, due to Smale~\cite{Smale}.
\begin{thm}[Smale~\cite{Smale}] Let $\Ncc$ and $\Ecc$ be two separable Banach manifolds. Let $\Fcc:\Ncc\rightarrow \Ecc$ be a smooth map of class $C^{k+1}$, such that all the differentials $d\Fcc(x)$ are Fredholm operators of index~$k$. Then, the non-regular values of $\Fcc$ is a set of first category. For any regular value $y$, its preimage $\Fcc^{-1}(y)$ is a $k$-dimensional submanifold of $X$.\end{thm}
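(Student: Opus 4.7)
The plan is to prove Smale's infinite-dimensional Sard theorem by reducing, via a local Fredholm normal form, to a parametric version of the classical finite-dimensional Sard theorem, and then promoting the slicewise conclusion to a global meagerness statement through a countable closed exhaustion. First I would use the separability of $\Ncc$ to cover it by a countable atlas of charts: since a countable union of meager sets in $\Ecc$ is meager and the submanifold property of $\Fcc^{-1}(y)$ is local, it suffices to prove the theorem in one chart around an arbitrary point $x_0\in\Ncc$. Set $T=d\Fcc(x_0)$: this is Fredholm of index $k$, with $\ker T\cong\RRR^d$ for some $d$ and cokernel of dimension $d-k$. Choose a closed complement $V\subset T_{x_0}\Ncc$ of $\ker T$, so that $T|_V:V\to\im T$ is a Banach isomorphism, and a finite-dimensional complement $W\cong\RRR^{d-k}$ of $\im T$ in $T_{\Fcc(x_0)}\Ecc$. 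Writing $\Fcc$ in decomposed coordinates $(a,z)\in V\oplus\RRR^d$ on the source and $(b,w)\in\im T\oplus W$ on the target, the inverse function theorem applied to $(a,z)\mapsto(\pi_{\im T}\Fcc(a,z),z)$ changes source coordinates so that $\Fcc$ takes the normal form
\[\Fcc(a,z)=\bigl(a,\,g(a,z)\bigr),\qquad g:\mathcal O\longrightarrow W,\ \mathcal O\subset V\oplus\RRR^d\text{ open},\]
with $g$ of class $C^{k+1}$.

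In this form $d\Fcc(a,z)$ is surjective iff $\partial_z g(a,z):\RRR^d\to\RRR^{d-k}$ is surjective, so the critical set $\Sigma$ is closed in $\mathcal O$, and $(a,w)$ is a critical value precisely when $w$ lies in the set $\sigma(a)$ of critical values of the $C^{k+1}$ map $g(a,\cdot):\RRR^d\to\RRR^{d-k}$. Because $k+1$ exceeds the threshold $\max(k,0)$ required by the classical Sard theorem, $\sigma(a)$ has Lebesgue measure zero in $\RRR^{d-k}$ for every $a$.

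The main technical step is to promote this slicewise null statement to meagerness of $\Fcc(\Sigma)$ in $V\oplus W$, since meagerness does not behave well under products in the Banach setting. I would shrink the chart to a cylinder $\mathcal O=U_V\times B(0,M)$ and exhaust the $\RRR^d$-direction by the closed balls $\overline{B(0,m)}$ with $m<M$, setting
\[T_m=\Bigl\{(a,w)\in U_V\times W:\ \exists\, z\in\overline{B(0,m)},\ (a,z)\in\Sigma,\ w=g(a,z)\Bigr\}.\]
Compactness of $\overline{B(0,m)}$, together with the closedness of $\Sigma$ and the continuity of $g$, makes each $T_m$ closed in $U_V\times W$. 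Moreover $T_m$ has empty interior: if a product ball $B_V\times B_W$ were contained in $T_m$, then for the center $a_0\in B_V$ every $w\in B_W$ would lie in $\sigma(a_0)$, contradicting the Lebesgue null property of $\sigma(a_0)$ obtained in the previous step. Hence each $T_m$ is closed and nowhere dense, so $\Fcc(\Sigma)=\bigcup_m T_m$ is meager in the chart; summing over the countable atlas gives meagerness of the non-regular values in $\Ecc$.

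Finally, for a regular value $y$ and a preimage $x=(a,z)$, the normal form forces $y=(a,g(a,z))$, so the fiber equation reduces to $g(a,\cdot)=y_2$ in the $\RRR^d$-direction; regularity of $y$ makes $\partial_z g(a,z)$ surjective at every solution, and the implicit function theorem in finite dimensions presents the solution set as a $C^{k+1}$ submanifold of $\RRR^d$ of dimension $d-(d-k)=k$ locally, which glues through the atlas to the global $k$-dimensional submanifold $\Fcc^{-1}(y)$. The main obstacle is really the third paragraph: the Fredholm hypothesis enters essentially there, since it is precisely what makes the kernel direction finite-dimensional and $\sigma$-compact, which in turn lets the stratification $\{T_m\}$ be built from compact pieces on which classical Sard can be invoked slicewise to produce closed nowhere dense strata.
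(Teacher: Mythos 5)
The paper does not actually prove this statement: it is imported verbatim from Smale's article, so there is no internal proof to compare against. Your argument is, in substance, Smale's original one, and it is correct: the local normal form $\Fcc(a,z)=(a,g(a,z))$ obtained from the Fredholm hypothesis via the inverse function theorem, the identification of critical values of $\Fcc$ with the slicewise critical values $\sigma(a)$ of the finite-dimensional maps $g(a,\cdot):\RRR^d\to\RRR^{d-k}$, the classical Sard theorem applied on each slice (for which $C^{k+1}$ is exactly the required regularity when $k\geq 0$), and the exhaustion by the sets $T_m$ — closed because the kernel direction is finite-dimensional and hence locally compact, nowhere dense because a product ball inside $T_m$ would force a set of positive measure into some $\sigma(a_0)$ — which is precisely the local properness step that converts slicewise null sets into a countable union of closed nowhere dense sets; the countable atlas given by separability then yields first category in $\Ecc$, and the regular-value statement follows from the finite-dimensional implicit function theorem in the same coordinates. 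The only caveats are cosmetic: your threshold remark ``$k+1$ exceeds $\max(k,0)$'' should be read as $k+1\geq\max(1,k+1)$, which holds only for $k\geq 0$ (the case the statement implicitly assumes, since it speaks of $C^{k+1}$ maps and $k$-dimensional preimages), and one should record that the degenerate case $\dim W=0$ is trivial since then there are no critical points at all.
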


For a pair~$(R,u)$ in $\Ncc_L(\HHH,\JJJ)$, inequality~(\ref{eqn:EuleqH}) and proposition~\ref{prop:LocalisationFloerStrips} imply that $u$ is contained in~$V_S(K)$ where $K$~is as in proposition \ref{prop:LocalisationFloerStrips} and $S=1+\frac{2}{C}\left[1+\frac{AC^2}{ c_0}\| H\|\right] $. Perturbations on~$\JJJ$ may be realized in~$V_S(K)$. Now, introduce the following Banach
manifolds:\espace

-- For $p>2$, the space $\Ncc_L^p$ collects the pairs $(R,u)$ where the map $u:(\BBB,\partial \BBB)\rightarrow (M,L)$ of class $W^{1,p}_{loc}$ converges to points of $L$ at $\pm\infty$ and is of class $W^{1,p}$ in their neighborhoods. In other words, we assume the existence of maps $w_{\pm}:\{z\in\CCC,\, \pm\imm z\geq 0\}\rightarrow M$ of class $W^{1,p}_{\loc}$ such that 
\begin{align*}
u(s,t) & = w_-\left(\exp(\pi z)\right)\, ,\\
 & = w_+\left(\exp (\pi-\pi z)\right)\, .\end{align*}

-- The space $\Icc^l$ collects parametrized families $\JJJ=\{J_{s,t}^R\}$ of class $C^l$ of compact homotopies from $J_0$ to $J_0$ inside $\Icc_A^l(J_0)$ and equal to $J_0$ outside $V_{S}(K)$ where $K$ is as in proposition \ref{prop:LocalisationFloerStrips}. Moreover, we require $J_{s,t}^R=J_0$ for $R<-1$ and $J_{s,t}^R=J_0$ for $|s|<R-1$ and $|s|>R+1$.\espace

-- For $u\in \Ncc_L^p$, the tangent space $T_u\Ncc_L^p$ is the space of sections of class $W^{1,p}$ of the Hermitian vector bundle $u^*TM\rightarrow \BBB$, this pullback being of class $W^{1,p}$. Let $\Ecc^p$ be the vector bundle of sections of $u^*TM\rightarrow \BBB$ of class $L^p$.\espace

From subsubsection~\ref{Subsubsection:Limits}, $\Ncc_L(\HHH,\JJJ)$ can be seen as a subset of the smooth Banach manifold $\Ncc_L^p$. Namely, it is the zero set of the global section
\[\Fcc_{\JJJ}:\begin{matrix}\RRR\times\Ncc_L^p & \rightarrow & \Ecc^p\\ (R,u) & \mapsto & \partial_su+J_{s,t}^R(u)\left[\partial_tu-X_{s,t}^R(u)\right]\, .\end{matrix}\]
For $(R,u)\in \Ncc_L(\HHH,\JJJ)$, the vertical derivative (that means, the vertical component of the differential $d\Fcc_{\JJJ}(R,u)$) is
\[D_{R,u}:\begin{matrix}\RRR\oplus W^{1,p}(\BBB,u^*TM) & \rightarrow & L^p(\BBB,u^*TM)\\ (\delta r,\delta u) & \mapsto & \partial_su+J_{s,t}^R(u)\partial_t\xi+\nabla_{\xi}J^R_{s,t}(u)\partial_t u+A(\delta r,\delta u)\, ,\end{matrix}\]
where $A$ is a compact operator, depending on $\{X_{s,t}^R\}$. It follows that $D_{R,u}$ is a Fredholm operator of index $n+1$ (\cite{McDSal}, appendix~C). If $D_{R,u}$ is onto for all $(R,u)\in \Ncc_L(\HHH,\JJJ)$, then this space is a $(n+1)$-dimensional manifold.

From (\cite{McDSal} p. 48), the vector bundle $\Ecc^p\rightarrow \RRR\times \Ncc^p_L\times \Icc^l$ is of class $C^{l-1}$, and $\Fcc(J,u)=\Fcc_{\JJJ}(u)$ defines a section of class $C^{l-1}$, provided that $H$ is of class $C^l$. The vertical derivatives of $\Fcc$ along its zero set are surjective operators. The implicit function theorem shows that the union of $\Ncc_L(\HHH,\JJJ)\times \{\JJJ\}$ where $\JJJ$ describes $\Icc^l$ is a submanifold $\Ncc_L(\HHH,\Icc^l)$ of $\RRR\times \Ncc_L^p\times \Icc^l$ of class $C^{l-1}$, see \cite{HoferSalamon} or (\cite{McDSal}, proposition 2.3.1). The next argument is based on the properties of the second projection 
\[\pi:\Ncc_L(\HHH,\Icc^l)\rightarrow \Icc^l\, .\]
This map is of class $C^{l-1}$. The tangent space of $\Ncc_L(\HHH,\Icc)$ at $(R,u)$ is given by
\[T_u\Ncc(\HHH,\Icc)=\left\{(\delta R,\delta u,\delta J),\, D_{R,u}(\delta R,\delta u)+\delta J_{s,t}^R \partial_tu=0\right\}\, .\]
The kernel of $d\pi(R,u,J)$ is exactly the kernel of $D_u$. Standard methods in functional analysis prove that all the differentials $d\pi(R,u,J)$ are Fredholm operators of index $n+1$, see (\cite{McDSal}, appendix~A).  For  $l-2\geq n+1$, Sard'-Smale's theorem \cite{Smale} implies that the regular values of~$\pi$ form a dense set of $\Icc^l$. For a regular value $\JJJ\in \Icc$, the operator $D_u$ is onto for every curve $u\in \pi^{-1}(\JJJ)=\Ncc_L(\HHH,\JJJ)$, and thus the space $\Ncc_L(\HHH,\JJJ)$ is a $(n+1)$-dimensional submanifold of $\RRR\times \Ncc_L^p$.\espace

Still with the above notations, the map
\[\mbox{ev}:\begin{matrix} \Ncc_L(\HHH,\Icc^l) & \rightarrow & L\\ u & \mapsto & \lim_{s\rightarrow -\infty} u(s,0)\end{matrix}\]
is a submersion onto $L$. Thus, $\Ncc_L(\{x\},L;\HHH,\Icc^l)=\mbox{ev}^{-1}(x)$ is a closed submanifold of codimension $n$. The projection $\pi$ restricts to a Fredholm map
\[\pi:\Ncc_L(\{x\},L;\HHH,\Icc^l)\rightarrow \Icc^l\]
of index $1$. When $l-2\geq 1$, the regular values form a dens subset of $\Icc^l$. For regular value $\JJJ\in \Icc^l$, the space $\Ncc_L(\{x\},L;\HHH,\Icc)$ is a one-dimesnional submanifold.

\newpage
\section{Chekhanov's proof.}\label{Section:Chekhanov'sProof}
This section revisits the work of Chekhanov \cite{Chek98,Oh}.
\subsection{Filtered Lagrangian Floer homology.}\label{Subsection:FilteredLagrangianFloerHomology} We set up here a {\em filtered version} of the Lagrangian Floer homology. For a Hamiltonian $H$ called \textit{admissible} with respect to $L$, we define homology groups denoted 
\begin{align} HF_*^{(a,b]}(L,\omega;H,J_0)\, , \end{align}
where the interval $(a,b]$, called the \textit{action window}, has length $b-a<\hbar_L(J_0)$. This condition removes some problems due to the presence of holomorphic disks (see \cite{FOOO}), and the definition given in subsection \ref{subsubsection:BoundaryOperator} is available for all compact Lagrangian submanifolds $L$, provided that the symplectic manifold $(M,\omega)$ is geometrically bounded. The construction requires only standard arguments dating back to the original work of A. Floer \cite{Floer88,Floer88bis,Floer88ter}.
\subsubsection{The Floer module.}\label{Subsubsection:FloerModule}
The symplectic form $\omega$ defines two morphisms $\pi_2(M,L)\rightarrow\RRR$: the \textit{symplectic action} $\omega$ and the \textit{Maslov index}\footnote{Recall its definition. Given a disk $u:(\DDD,\partial \DDD)\rightarrow (M,L)$ of class $C^1$, note $x:\SSS^1\rightarrow L$ its boundary. Then, $x^*TL$ may be viewed as a loop $\gamma$ of Lagrangian subspaces of $\CCC^n$ via a symplectic trivialization of $u^*TM$. Set: $\omega(u)=\int_{\DDD}u^*\omega$ and $\mu(u)=\mu_{RS}(\RRR^n,\gamma)$. Here, $\mu_{RS}$ is the Robbin-Salamon index for the Lagrangian paths~\cite{RobbinSalamon1,RobbinSalamon2}. This definition does not depend on $u$ up to an homotopy.} $\mu$. In the sequel, $\Lambda_L M$ stands for the space of contractible $L$-connecting paths of class $C^1$. Let $\Lambdatilde_L M\rightarrow \Lambda_L M$ be its universal covering. Basically, a point in $\Lambdatilde_LM$ is represented by a half-disk $w:(\DDD_+,\partial_0\DDD_+)\rightarrow (M,L)$ of class $C^1$ bounded by a $L$-connecting path $x$. Here, $\partial_0\DDD_+$ denotes the segment $[-1,1]$ viewed as the lower boundary of the upper unit half-disk $\DDD_+$, and $\partial_+\DDD_+$ denotes its upper bound, parametrized by $t\in [0,1]$ as~$e^{i\pi t}$. The map $w$ is called a \textit{capping half-disk} of $x$.

Introduce the Galois covering $\Lambda_L'M\rightarrow \Lambda_LM$ whose deck group is given by the quotient
\[\Gamma(\omega)=\pi_2(L,M)/\ker \omega\cap \ker I_{\mu}\, .\]
In other words, $\Lambdatilde_L'M$ is the quotient of $\Lambda_LM$ under the action of $\ker\omega\cap\ker I_{\mu}$. Two pairs $[x,w]$ and $[x,w']$ define the same point in $\Lambda_L'M$ whenever the disk $w\sharp \wbar '$ is vanished by both $\omega$ and $\mu$.\espace

For a Hamiltonian $H:M\times [0,1]\rightarrow \RRR$, the action functional $\Acc_H:\Lambda_L'M\rightarrow\RRR$ is defined by 
\[\Acc_H[x,u]=\int_0^1 H_t(x_t)dt -\int_{\DDD_+}u^*\omega\, .\]
The formal critical points of $\Acc_H$ are precisely the \textit{capping $L$-orbits} $[x,w]$ of $H$, i.e. points of $\Lambda '_L M$ above contractible $L$-orbits of the Hamiltonian flow of $H$. They form a set, denoted by~$\Pcc'_L(H,\omega)$.

The \textit{relative Floer module} $CF(L,\omega;H)$ is the $\FFF_2$-vector space generated by $\Pcc_L'(H,\omega)$, equipped with the valuation: 
\[v_H(\xi) =\sup\left\{ \Acc_H[x,w],\, \xi[x,w]\neq 0\right\}\, .\]
Set:
\begin{align*} CF^a(L\omega;H) & =\left\{\xi\in CF(L,\omega;H),\, v_H(\xi)\leq a\right\}\, ;\\ \mbox{ and } CF^{(a,b]}(L,\omega;H) & = CF^{b}(L,\omega;H)/CF^a(L,\omega;H)\, .\end{align*}
\subsubsection{The Conley-Zehnder index.}\label{Subsubsection:ConleyZehnder} As yet, no assumption was made on the Hamiltonian~$H$. An $L$-orbit $x$ is called \textit{non-degenerate} when $d\varphi_1^H(x_0)(T_{x_0}L)$ is transverse to $T_{x_0}L$. Given a capping half-disk $w:(\DDD_+,\partial_0\DDD_+)\rightarrow (M,L)$ of $x$, the {\em Conley-Zehnder index} of $\xbar=[x,w]$ is defined as follows. Let $\Phi:w^*TM\rightarrow \CCC^n$ be a symplectic trivialization of $w^*TM$. Set
\begin{align*} \forall t\in [0,1], \, \Lambda_1^{\Phi} (t) & = \left[\Phi(t)T_{u(t)}L\right]\oplus \left[\Phi(-t)T_{u(t)}L\right]\\ \Lambda_2^{\Phi}(t) & = \gr\left[\Phi(e^{i\pi t})d\varphi_t^H(x_0) \Phi(1)^{-1}\right]\, . \end{align*}
Here, $\Lambda_1^{\Phi}$ and $\Lambda_2^{\Phi}$ are paths of Lagrangian subspaces of $\CCC^n\oplus \CCC^n$. Such a pair is associated to a half-integer $\ind\left(\Lambda_1^{\Phi},\Lambda_2^{\Phi}\right)$ , called the Robbin-Salamon index, see \cite{RobbinSalamon1,RobbinSalamon2}. The \textit{Conley-Zehnder index} of the capping $L$-orbit $\xbar$ of $H$ is defined by\footnote{Note that this definition is independent on the trivialization $\Phi$.}:
\[\mu_{CZ}(\xbar)=\frac{n}{2}+\ind\left(\Lambda_1^{\Phi},\Lambda_2^{\Phi}\right)\in \ZZZ\, .\]

We say that $H$ is \textit{admissible} with respect to the action window $(a,b]$ when all the capping $L$-orbits with action in $(a,b]$ are non-degenerate. In this case, the $\FFF_2$-vector space $CF_*^{(a,b]}(L,\omega;H)$ is graded by the Conley-Zehnder index. This condition is generic: admissible Hamiltonians for $L$ form a dense subset of $C^2_c(M\times [0,1],\RRR)$.  
\begin{prop}\label{prop:GenericHamiltonian}
Let $L$ be a compact Lagrangian submanifold of $(M,\omega)$, and let $H$ be a Hamiltonian. Then, there exists a Hamiltonian isotopy $\{g_t\}$ supported in a sufficiently small neighborhood of $L$, such that $\{\varphi_t^H\circ g_t\}$ has only non-degenerate $L$-orbits. Moreover, if $\{g_t\}$ is generated by $F$, then $\{\varphi_t^H\circ g_t\}$ is generated by $H_t(x)+F_t({(\varphi_t^H)}^{-1}(x))$, where $F$ can be chosen sufficiently $C^2$-small.
\end{prop}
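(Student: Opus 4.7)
First I would verify the generating-Hamiltonian formula. Setting $\varphi_t := \varphi_t^H \circ g_t$ and differentiating in $t$ yields $\dot\varphi_t(x) = X_{H_t}(\varphi_t(x)) + d\varphi_t^H(g_t(x)) X_{F_t}(g_t(x))$. Since $\varphi_t^H$ is symplectic, the push-forward identity $\phi_* X_F = X_{F\circ \phi^{-1}}$ (valid for any symplectomorphism $\phi$) rewrites the second term as $X_{F_t\circ (\varphi_t^H)^{-1}}(\varphi_t(x))$, so $\{\varphi_t\}$ is generated by $K_t := H_t + F_t \circ (\varphi_t^H)^{-1}$, as claimed.

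The main task reduces to a transversality statement: an $L$-orbit of $\{\varphi_t\}$ is a point $x \in L$ with $\Phi(x) := \varphi_1(x) \in L$, and non-degeneracy amounts to $d\Phi(x)(T_xL) \pitchfork T_{\Phi(x)}L$ in $T_{\Phi(x)}M$. So I must choose $F$ with $\|F\|_{C^2}$ small and $\supp F$ inside a prescribed neighborhood of $L$, such that $g_1^F(L)$ meets $L' := (\varphi_1^H)^{-1}(L)$ transversally in $M$. By Weinstein's tubular neighborhood theorem, fix a symplectomorphism from a neighborhood $U$ of $L$ in $M$ onto a neighborhood of the zero section in $T^*L$, sending $L$ to the zero section. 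For a $C^2$-small function $f : L \to \RRR$ and a fiberwise cutoff $\chi(\|p\|)$ equal to $1$ near the zero section, let $F$ be the pullback of $\chi(\|p\|)f(q)$ to $U$, extended by $0$. A direct computation in cotangent coordinates shows that the time-$1$ map $g_1^F$ sends $L$ to $\graph(-df)$; for $\|f\|_{C^2}$ small enough, $g_1^F(L)\subset U$, so every intersection $g_1^F(L) \cap L'$ lies inside the Weinstein neighborhood. The problem becomes: for generic small $f$, $\graph(-df) \pitchfork L'$ in $T^*L$.

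This last step is a standard Sard--Smale argument. Let $\Fcc$ be the Banach space of $C^2$-small functions on $L$, and consider the evaluation $\mathrm{ev} : L \times \Fcc \to T^*L$, $(q, f) \mapsto (q, -df(q))$. The map $f \mapsto df(q_0)$ is a surjection onto $T^*_{q_0} L$ (any covector is the differential at $q_0$ of an appropriate linear function cut off to a bump), so $\mathrm{ev}$ is a submersion, and $\Zcc := \mathrm{ev}^{-1}(L')$ is a smooth Banach submanifold of $L \times \Fcc$. The projection $\Zcc \to \Fcc$ is Fredholm of index $0$, so Smale's theorem produces a dense $G_\delta$ subset of regular values; for any $f$ therein, $\graph(-df)$ meets $L'$ only in finitely many transverse points, giving the required non-degeneracy. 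The main subtlety is that the localized perturbation must not create tangential intersections outside $U$; this is handled by taking $F$ so small that $g_1^F(L)$ remains inside $U$, so every $L$-orbit is captured by the Weinstein picture and the genericity argument applies uniformly.
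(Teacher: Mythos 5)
The paper states Proposition~\ref{prop:GenericHamiltonian} without proof, so there is nothing to compare against; the introduction explicitly attributes the $C^2$-small case to Weinstein's neighborhood theorem, which is exactly the strategy you carry out. Your argument is correct in all its parts: the push-forward identity $\phi_* X_F = X_{F\circ\phi^{-1}}$ for a symplectomorphism $\phi$ gives the generating Hamiltonian; non-degeneracy of $L$-orbits of $\varphi_t = \varphi_t^H\circ g_t$ reduces to transversality of $g_1(L)$ with $L' := (\varphi_1^H)^{-1}(L)$ (note the paper's "transverse to $T_{x_0}L$" is clearly a typo for $T_{x_1}L$ with $x_1 = \varphi_1^H(x_0)$, and you interpret it correctly); the Weinstein chart realizes $g_1(L)$ as an exact graph; the parametric Sard--Smale argument applied to $(q,f)\mapsto (q,-df(q))$ yields a dense set of admissible $C^2$-small $f$; and you correctly observe that making $F$ small enough forces $g_1(L)\subset U$, so that no $L$-orbits escape the chart. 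The only cosmetic discrepancy is a sign: subsection~\ref{Subsection:ProofofTheoremRational} takes the time-one flow of $\epsilon f(q)\sigma(|p|)$ to send $L$ to $\graph(\epsilon df)$ rather than $\graph(-\epsilon df)$, a matter of the sign convention for the canonical symplectic form on $T^*L$, which does not affect the argument.
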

\subsubsection{The boundary operator.}\label{subsubsection:BoundaryOperator}
Given two capping $L$-orbits $\xbar$ and $\ybar$ of $H$, let $\Mcc(\xbar,\ybar ;H,J)$ denote the space of Floer connecting strips $u$ from $x$ to $y$ with $\xbar\sharp u=\ybar$ for the constant homotopy $H_s=H$, $J_s=J$. The space $\Mcc(\xbar,\ybar;H,J)$ is endowed with the topology of $C^{2}$-convergence on compact subsets of $\BBB$. Explicitly: 
\[\Mcc(\xbar,\ybar;H,J) =\left\{ u:\BBB\rightarrow M,\, \mbox{ such that }\begin{matrix} \partial_su+J_t(\partial_tu-X_t)=0\\ \forall s\in \RRR,\, u(s,0),u(s,1)\in L,\\ \lim_{s\rightarrow -\infty}u(s,t)=x(t),\\ \lim_{s\rightarrow +\infty}u(s,t)=y(t),\\ \mbox{and } \xbar\sharp u =\ybar \end{matrix} \right\}\, .\]

Here, the $\SSS^1$-family $J=\{J_t\}$ is assumed to meet the transversality conditions required for $\Mcc(\xbar,\ybar,H,J)$ with $a<\Acc_H(\ybar)<\Acc_H (\xbar)\leq b$ to be manifolds of dimension 
\[\dim \Mcc(\xbar,\ybar;H,J)=\mu_{ CZ }(\xbar)-\mu_{CZ}(\ybar)\, .\]
Assume $b-a<\hbar_L(J_0)$, and choose the $ \SSS^1$-parametrized family $\{J_t\}$ inside a fixed simply connected neighborhood $\Ucc(J_0) \subset \{ J\in \Icc_A^2(J_0),\, \hbar_L(J)>b-a\}$ of $J_0$. Remember there is an $\RRR$-action operating by translation on the $s$-variable. Set
\[\Mchat(\xbar,\ybar;H,J)=\Mcc(\xbar,\ybar;H,J)/\RRR\, .\]
For each element $u\in \Mcc(\xbar,\ybar;H,J)$, lemma \ref{lemma:A3} gives
\[E(u)=\Acc_H(\xbar)-\Acc_H(\ybar)\leq b-a<\hbar_L(J_t),\, \forall t\, .\]
Thus, no bubbling off of holomorphic disks can occur in the limit set of a sequence~$(u_n)$ of $\Mcc(\xbar,\ybar;H,J)$. Bubbling off of holomorphic spheres can be generically avoided for one- and two-dimensional components of $\Mcc(\xbar,\ybar;H,J)$. Standard compactness arguments, already presented in Floer~\cite{Floer88,Floer88bis,Floer88ter}, imply:
\begin{itemize}
\item Whenever $\mu_{CZ}(\xbar)-\mu_{CZ}(\ybar)=1$, the zero dimensional manifold $\Mchat(\xbar,\ybar;H,J)$ is compact then finite;
\item Whenever $\mu_{CZ}(\xbar)-\mu_{CZ}(\zbar)=2$, the one-dimensional manifold $\Mchat(\xbar,\zbar;H,J)$ can be compatified as a cobordism between the empty set and the union of
\begin{align}  & \Mchat(\xbar,\ybar;H,J)\times \Mchat(\ybar,\zbar;H,J) & \mbox{ for } & \mu_{CZ}(\ybar)=\mu_{CZ}(\zbar)+1\, .\label{sets:McxyXMcyz}\end{align}
\end{itemize} 
For instance, see (\cite{Floer88}, section 2).\espace

Considering those observations, the following operator $\partial$ is well defined: 
\[\partial:\begin{matrix} CF_k^{(a,b]}(L,\omega;H) & \longrightarrow & CF_{k-1}^{(a,b]}(L,\omega;H)\\ \xbar & \longmapsto & \sum \sharp_2 \Mchat(\xbar,\ybar;H,J)\, y\, , \end{matrix}\]
where $\sharp_2$ denotes the number of elements mod 2. The coefficient behind $\zbar$ in the expression of $\partial^2 \xbar$ is exactly the cardinal of the set (\ref{sets:McxyXMcyz}), which is even. Thus, $\partial^2=0$. The homology of the chain complex $(CF_*^{(a,b]}(L,\omega;H),\partial)$ is the Floer homology groups:
\begin{align}HF^{(a,b]}_*(L,\omega;H,J_0)\, .\label{eqn:DefinitionHFab}\end{align}

Note that, for $a<b<c$ with $c-a<\hbar$, the short exact sequence
\[0\rightarrow CF_*^{(a,b]}\rightarrow CF_*^{(a,c]}\rightarrow CF_*^{(b,c]}\rightarrow 0\]
induces in homology a long exact sequence
\[HF_*^{(a,b]}\rightarrow HF_*^{(a,c]}\rightarrow HF_*^{(b,c]}\rightarrow HF_{*-1}^{(a,b]}\, .\] 

Note that  the dependence in the small peturbation $J$ is drawn up in the notations~(\ref{eqn:DefinitionHFab}). Indeed, the resulting homology groups are independent on this perturbation up to a unique isomorphism (see below). It seems important to choose $J$ in a contractible neighborhood $\Ucc(J_0)$, does it ?
\subsection{Definition of the continuation maps.}\label{Subsection:ContinuationMaps} Given two admissible pairs $(H_-,J_-)$ and $(H_+,J_+)$, the continuation map is a morphism defined by a compact homotopy $\{H_s\}$ from $H_-$ to $H_+$. We denote by $\Mcc(\xbar_-,\xbar_+;\{H_s\},\{J_s\})$ the space of the Floer continuation strips $u$ (for the compact homotopy $(H_s,J_s)$), from $x_-$ to $x_+$, with $\xbar_-\sharp u=\xbar_+$. Here, the homotopy $\{J_s\}$ goes from $J_-$ to $J_+$ and lies in $\Ucc(J_0)$. It is chosen to meet all the required transversality conditions for the spaces $\Mcc(\xbar_-,\xbar_+,\{H_s\},\{J_s\})$ to be manifolds of dimension $\mu_{CZ}(\xbar_-)-\mu_{CZ}(\xbar_+)$.
\begin{prop}\label{prop:DefinitionContinuationMap}
Say that $\{H_s\}$ is a \textit{$C$-homotopy}\footnote{Word introduced by Ginzburg~\cite{Ginzburg}.} when $\alpha_+(\partial_sH_s)<C$. The map
\[\Psi:\begin{matrix}CF_k^{(a,b]}(L,\omega;H_-) & \longrightarrow & CF_k^{(a+C,b+C]}(L,\omega;H_+)\\ \xbar_- & \longmapsto & \sum \sharp_2\Mcc(\xbar_-,\ybar_+;\{H_s\},\{J_s\})\, \ybar\, . \end{matrix}\]
is well-defined and commutes with the boundary operators.\end{prop}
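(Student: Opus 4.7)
The plan is to mimic the construction of continuation morphisms in ordinary Lagrangian Floer theory, with careful bookkeeping of the filtration shift and an explicit verification that the hypothesis $b-a<\hbar_L(J_0)$ continues to rule out disk bubbling once the homotopy $\{H_s\}$ is introduced. Everything pivots on the action--energy identity: a direct computation analogous to Lemma~\ref{lemma:A3} shows that every $u\in\Mcc(\xbar_-,\xbar_+;\{H_s\},\{J_s\})$ satisfies
\[
E(u) \;=\; \Acc_{H_-}(\xbar_-) - \Acc_{H_+}(\xbar_+) + \int_{-\infty}^{+\infty}\!\!\int_0^1 (\partial_s H_s)(t,u(s,t))\,dt\,ds.
\]
Using $E(u)\geq 0$ together with the $C$-homotopy condition $\alpha_+(\partial_sH_s)<C$, this yields two estimates: the action shift $\Acc_{H_+}(\xbar_+)\leq\Acc_{H_-}(\xbar_-)+C$, and, whenever both endpoints lie in their respective action windows, the sharp energy bound $E(u)\leq b-(a+C)+C = b-a < \hbar_L(J_0)$.

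First I would invoke the shift inequality twice (with $b$ and with $a$) to check that $\Psi$ sends $CF^{b}(L,\omega;H_-)$ into $CF^{b+C}(L,\omega;H_+)$ and $CF^{a}(L,\omega;H_-)$ into $CF^{a+C}(L,\omega;H_+)$, hence descends to the quotients as claimed. Well-definedness as a finite sum then requires compactness of the zero-dimensional components of $\Mcc(\xbar_-,\ybar_+;\{H_s\},\{J_s\})$: the uniform energy bound $E(u)\leq b-a$ together with Proposition~\ref{prop:LocalisationFloerStrips} confines all strips to a fixed compact subset of $M$, sphere bubbling is avoided generically, and disk bubbling is excluded because any such bubble would carry energy at most $b-a<\hbar_L(J_{s,t})$ (this is precisely why $\{J_s\}$ was chosen inside the neighborhood $\Ucc(J_0)$ fixed in subsection~\ref{subsubsection:BoundaryOperator}). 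Gromov compactness then shows that any index-zero limit is in fact a single unbroken strip, so the relevant moduli space is compact and finite.

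The commutation $\Psi\circ\partial_{H_-}=\partial_{H_+}\circ\Psi$ is the classical cobordism argument. For capping orbits $\xbar_-,\zbar_+$ with $\mu_{CZ}(\xbar_-)-\mu_{CZ}(\zbar_+)=1$, the one-dimensional manifold $\Mcc(\xbar_-,\zbar_+;\{H_s\},\{J_s\})$ admits a compactification whose oriented boundary is exactly
\[
\bigsqcup_{\ybar_-}\!\Mchat(\xbar_-,\ybar_-;H_-,J_-)\times\Mcc(\ybar_-,\zbar_+;\{H_s\},\{J_s\}) \;\sqcup\; \bigsqcup_{\ybar_+}\!\Mcc(\xbar_-,\ybar_+;\{H_s\},\{J_s\})\times\Mchat(\ybar_+,\zbar_+;H_+,J_+),
\]
the two types of broken trajectories contributing respectively to $\Psi\circ\partial_{H_-}$ and to $\partial_{H_+}\circ\Psi$. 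Counting modulo $2$ yields the desired chain-map identity. The intermediate orbits $\ybar_\pm$ automatically lie in the correct action windows by the shift inequality applied to each piece, and the subadditivity of energy along a broken trajectory keeps the total below $b-a<\hbar_L$, so once again no disk bubble can intrude in the compactification.

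The only point that is not purely formal bookkeeping is the verification that the $C$-homotopy estimate survives the splitting of $\{H_s\}$ into two pieces appearing in the compactification (the first piece is stationary at $H_-$, the second is the original homotopy, so each piece is still a $C$-homotopy after the obvious reparametrization). Once this is in hand, everything else reduces to Floer's standard transversality, compactness, and gluing package, for which I would refer to~\cite{Floer88,Salamon,McDSal}.
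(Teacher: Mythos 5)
Your proof is correct and takes essentially the same approach as the paper: the action--energy identity of Lemma~\ref{lemma:A3} yields the shift inequality and the uniform bound $E(u)\leq b-a<\hbar_L(J_z)$, which rules out disk bubbling and makes the moduli spaces compact; the chain-map property then follows from the standard cobordism of the one-dimensional moduli spaces once one checks, as you do, that the intermediate capping orbits lie in the appropriate action windows (and are therefore non-degenerate).
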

\begin{proof} Let $\xbar_-$ and $\ybar_+$ be two capping $L$-orbits respectively of $H_-$ and $H_+$ such that $a<\Acc_{H^-}(\xbar_-)\leq b$ and $a+C<\Acc_{H^+}(\ybar_+)\leq b+C$. Lemma \ref{lemma:A3} shows that the energies of elements in $\Mcc(\xbar_-,\ybar_+,\{H_s\},\{J_s\})$ are uniformly bounded by $b-a$. Indeed, 
\[E(u)\leq \Acc_{H^-}(\xbar_-)-\Acc_{H^+}(\ybar_+)+\alpha_+(\partial_s H_s)\leq b-a-C+C=b-a\, .\]
Recall that $b-a<\hbar_L(J_z)$ for all $z\in \BBB$. So, as before, no bubbling off of holomorphic spheres or disks can appear in the limit set of a sequence $(u_n)$ in $\Mcc(\xbar_-,\ybar_+;\{H_s\},\{J_s\})$. Figure \ref{Figure:Explicative} helps to understand the following arguments. Up to an extraction of a subsequence, $(u_n)$ converges to a Floer continuation strip $v$ for the compact homotopy $(H_s,J_s)$, with finite energy. It goes from~$\ybar_-$ to~$\xbar_+$ with $\Acc_{H^-}(\ybar_-)\leq \Acc_{H^-}(\xbar_-)\leq b$ and $\Acc_{H^+}(\xbar_+)\geq \Acc_{H^+}(\ybar)>a+C$. As $\Acc_{H_+}(\xbar_+)-\Acc_{H_-}(\ybar_-)\leq C$, we immediately get $\Acc_{H^+}(\xbar_+)\leq b+C$ and $\Acc_{H^-}(\ybar_-)>a$. The actions of $\ybar_-$ and $\xbar_+$ belong respectively to the action windows $(a,b]$ and $(a+C,b+C]$. Thus, they are non-degenerate. As in \cite{Salamon,Floer88}, the limits of the sequences $(s_n\cdot u_n)$ with $s_n\rightarrow -\infty$ (resp. $+\infty$) form a "broken" Floer continuation strip from $\xbar_-$ to $\ybar_-$ (resp. from $\xbar_+$ to $\ybar_+$). Standard considerations on the index give:
\begin{itemize}
\item Whenever $\mu_{CZ}(\xbar_-)=\mu_{CZ}(\ybar_+)$, the zero-dimensional manifold $\Mcc(\xbar_-,\ybar_+;\{H_s\},\{J_s\})$ is compact then finite ;
\item Whenever $\mu_{CZ}(\xbar_-)=\mu_{CZ}(\ybar_+)+1$, non-compact components of the one-dimensional manifold $\Mcc(\xbar_-,\ybar_+;\{H_s\},\{J_s\})$ can be compactified in a cobordism between the sets:
\begin{align} & \Mchat(\xbar_-,\ybar_-;H_-,J_-)\times \Mcc(\ybar_-,\ybar_+;\{H_s\},\{J_s\}) & \mbox{ for } & \mu_{CZ}(\ybar_-)=\mu_{CZ}(\ybar_+)\\
\mbox{and } & \Mcc(\xbar_-,\xbar_+;\{H_s\},\{J_s\})\times \Mchat(\xbar_+,\ybar_+;H_+,J_+) & \mbox{ for } & \mu_{CZ}(\xbar_-)=\mu_{CZ}(\xbar_+)\, .\end{align}
\end{itemize} 
The first point shows that the definition of $\psi$ makes sense. The second point can be algebraically translated into $\partial\Psi=\Psi\partial$.\end{proof}
The map $\Psi$ induces a morphism in homology, called \textit{the continuation morphism}:
\begin{align} \Psi:HF_*^{(a,b]}(L,\omega;H_-,J_0)\rightarrow HF_*^{(a+C,b+C]}(L,\omega;H_+,J_0)\, .\label{Continuation}\end{align}
We point out that \textit{the continuation morphism $\Psi$ does not depend on the $C$-homotopy $(H_s,J_s)$ used to define it}. Moreover, \textit{the composition of two continuation morphisms is equal to the continuation morphism,} with the good shift in the action window.
\begin{figure}
\centering
\begin{picture}(14.5,11.5)
\put(-0.5,0){\vector(0,1){11.5}} \put(-3,11){Energy} 
\put(-0.7,9.5){\line(1,0){0.4}} \put(-0.7,4.5){\line(1,0){0.4}} \put(-0.7,5.5){\line(1,0){0.4}} \put(-0.7,0.5){\line(1,0){0.4}} 
\put(2,9.5){\line(0,-1){5}} \put(1.8,9.5){\line(1,0){2.4}} \put(4,9.5){\line(0,-1){5}} \put(1.8,4.5){\line(1,0){2.4}} \put(12,5.5){\line(0,-1){5}} \put(11.8,5.5){\line(1,0){2.4}} \put(14,5.5){\line(0,-1){5}}
\put(11.8,0.5){\line(1,0){2.4}} 
\put(2.8,9.7){$\xbar_-$} \put(2.8,4.7){$\ybar_-$} \put(12.8,5.7){$\xbar_+$} \put(12.8,0.7){$\ybar_+$}
\put(15,1.5){\vector(0,1){7}} \put(15.5,5){$\leq C$}
\qbezier(2,4.5)(2,1.5)(5,1.5)  \qbezier(5,3.5)(4,3.5)(4,4.5)
\qbezier(5,1.5)(7,1.5)(8.5,4) \qbezier(7.5,6)(6,3.5)(5,3.5)
\qbezier(8.5,4)(10,6.5)(11,6.5) \qbezier(11,8.5)(9,8.5)(7.5,6)
\qbezier(11,6.5)(12,6.5)(12,5.5) \qbezier(14,5.5)(14,8.5)(11,8.5)  
\put(-3.5,9.5){$\Acc_{H^-}(\xbar_-)$} \put(-3.5,5.5){$\Acc_{H^+}(\xbar_+)$} \put(-3.5,4){$\Acc_{H^-}(\ybar_-)$} \put(-3.5,0.5){$\Acc_{H^+}(\ybar_+)$} 
\end{picture}
\caption{Proof of proposition \ref{prop:DefinitionContinuationMap}}\label{Figure:Explicative}
\end{figure}
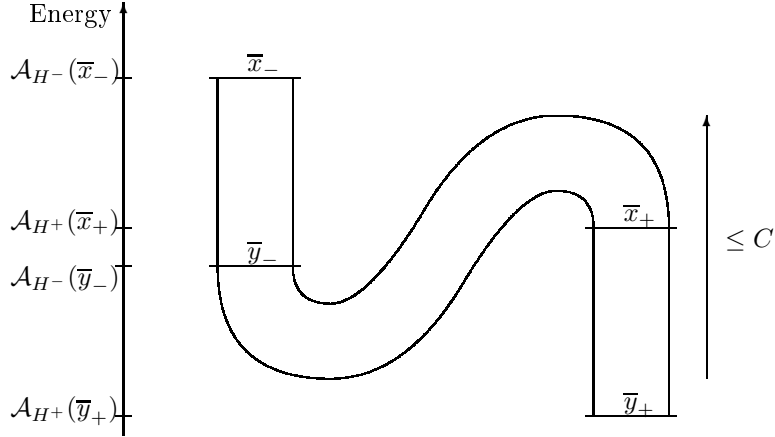

If $(H_s,J_s)$ and $(H_s',J_s')$ are two $C$-homotopies satisfying the required transversality conditions, then $H^r_{s,t}=(1-r)H_s+rH_s'$ is an homotopy of $C$-homotopies. Let $\Mcc(\xbar_-,\ybar_+;\{H_s^r\},\{J_s^r\})$ be the space of pairs $(r,u)$ where $r\in [0,1]$ and $u$ is a Floer continuation strip for the compact homotopy $(H_{s,t}^r,J_{s,t}^r)$ from $\xbar_-$ to $\ybar_+$. Here, the parametrized family $\{J^r_{s,t}\}$ of compact homotopies is chosen inside $\Ucc(J_0)$ so that the spaces $\Mcc(\xbar^-,\ybar^+;\{H_s^r\},\{J_s^r\})$ are smooth manifolds of dimension $\mu_{CZ}(\xbar_-)-\mu_{CZ}(\ybar_+)+1$. Once again, no bubbling off of holomorphic disks or spheres can occur. By counting $\sharp_2\Mcc(\xbar_-,\ybar_+;\{H_{s,t}^r\},\{J_{s,t}^r\})$ for $\mu_{CZ}(\ybar_+)=\mu_{CZ}(\xbar_-)+1$, we easily construct an homotopy map between the chain maps defined by $H_s$ and $H_s'$. The argument is classic, but as above, the reader would have to check that the involved capping orbits belong to the expected intervals.

We do not give the detailed proofs. Note that the continuation morphisms defined by constant homotopies $H_s=H$ are isomorphisms. We deduce that the Floer homology groups (\ref{eqn:DefinitionHFab}) are independent on the choice of the small perturbation of $J_0$, as announced in the end of the subsection \ref{Subsection:FilteredLagrangianFloerHomology}.\espace

Without proof, we assert here a result mentioned in (\cite{Ginzburg}, section 3.2.3, result H3):\espace

\textit{Let $a^s$ and $b^s$ be compact homotopies from $a^-$ to $a^+$ and from $b^-$ to $b^+$, with $b^s-a^s<\hbar(J_0)$. Assume that $a^s$ and $b^s$ are not critical values of the action functional $\Acc_{H_s}$, where $\{H_s\}$ is a compact homotopy from $H_-$ to $H_+$. Then there exists an isomorphism 
\[HF_*^{(a^-,b^-)}(L,H_-,J_0)\rightarrow HF_*^{(a^+,b^+)}(L,H_+,J_0)\, .\]}
\subsection{Proof of Theorem \ref{thm:Chekhanov} for rational Lagrangian submanifolds.}\label{Subsection:ProofofTheoremRational}
A Lagrangian submanifold $L$ is called \textit{rational} when $\omega\pi_2(M,L)=2c\mathbf{Z}$ with $c>0$ (\cite{Polterovich}, definition 1.2). We prove Theorem \ref{thm:Chekhanov} where $\hbar_L$ is replaced by $c$. First, we may assume that $\|H\|_+=\|H\|_-<c$, by replacing $H_t$ by $H_t-f(t)$, with a suitable function $f$. Remark that the energy of a holomorphic sphere or disk is positive, thus greater or equal to $2c$. Thus, $2c\leq \hbar_L(J_0)\leq \hbar_L$.
\subsubsection{Morse theory.}
Let $f:L\rightarrow \RRR$ be a Morse function. The Morse-Smale complex $CM_*(f)$ of $f$ is the $\FFF_2$-vector space generated by the set of critical points of $f$, and graded by the Morse index. Fix a Riemannian metric $g$ on $L$. (We can always assume that $g$ is induced by the almost complex structure $J_0$.) Let $\nabla f$ be the gradient of $f$ with respect to $g$, and let $\{\psi_t^f\}$ be the anti-gradient flow of $f$. For each critical
point $x$ of $f$, the sets
\begin{align}
W^s(x,\nabla f) & =\left\{y\in L,\, \psi_t^f(y)\rightarrow x\right\}\, ,\\
W^u(x,\nabla f) & = \left\{y\in L,\, \psi_{-t}^f(y)\rightarrow x\right\}
\end{align}
are embedded disks, respectively called the stable and unstable manifolds at $x$ (\cite{Jost}, Corollary~6.3.1). Recall that the Morse index $\mu_M(x)$ is equal to the dimension of $W^u(x,\nabla f)$. Generically on~$g$ (and hence on~$J_0$), all the stable and unstable manifolds intersect pairwise transversally. In particular, whenever $\mu_M(x)=\mu_M(y)+1$, the manifold $W^u(x,\nabla f)\cap W^s(y,\nabla f)$ has dimension~$1$. If it is non-empty, then $f(y)<f(x)$. For $f(y)<a<f(x)$, it intersects transversally the one-codimensional manifold~$f^{-1}(a)$, and the intersection $\Mchat(x,y;f,g)$ is a finite set, well-defined up to a unique bijection obtained by following the anti-gradient flow of $f$ (\cite{Jost}, section 6.5). The Morse boundary operator $\partial_g$ is defined as follows: 
\[\partial_g:\begin{matrix} CM_k(f) & \longrightarrow & CM_{k-1}(f)\\ x & \longmapsto & \sum \sharp_2 \Mchat(x,y;f,g)\, y\, . \end{matrix}\]

We have: $\partial_g^2=0$. The homology of the complex $\left(CM_*(f),\partial_g\right)$ is denoted by: \[HM_*(f,g)\, ,\] which is independent to $(f,g)$ up to a unique isomorphism, obtained by continuation as in Floer theory (\cite{Jost}, section 6.7). Those homology groups are isomorphic to the singular homology groups of $L$, which leads to the Morse inequalities (\cite{Jost}, section 6.10).\espace

Floer theory, presented in subsection \ref{Subsection:FilteredLagrangianFloerHomology}, may be viewed as an adaptation of the Morse theory for the action functional $\Acc_H$. Beyound the well-known analogy, there exists a deep link between Floer homology and Morse homology. From \cite{Weinstein71,Weinstein}, recall:
\begin{thm}[Weinstein (\cite{Weinstein71}, Theorem 6.1)]
For a sufficiently small $r>0$, there exists a symplectomorphism from $T^*_rL$ onto an open neighborhood $U$ of $L$, sending the zero section onto~$L$ as the identity.
\end{thm}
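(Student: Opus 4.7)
The proof plan is standard and proceeds via the Moser isotopy trick. The strategy is to first produce a diffeomorphism between a neighborhood of the zero section in $T^*L$ and a neighborhood of $L$ in $M$ which has the right behavior along $L$, and then deform it into a symplectomorphism.

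First, I would construct an auxiliary diffeomorphism. The Lagrangian condition gives a canonical isomorphism between the normal bundle $NL \to L$ and the cotangent bundle $T^*L$: for $x\in L$ and $v\in T_xM/T_xL$, associate the covector $\iota(v)\omega|_{T_xL}\in T_x^*L$. Non-degeneracy of $\omega$ combined with the fact that $T_xL$ is maximally isotropic makes this a linear isomorphism in each fiber. Pick a Riemannian metric on $M$ (for instance, the one associated to $J_0$) so that the exponential map identifies a neighborhood of the zero section in $NL$ with a neighborhood of $L$ in $M$. Composing with the above isomorphism $NL \simeq T^*L$ yields a diffeomorphism $\Phi: T^*_{r_0}L \to U_0$, equal to the identity on $L$ and whose differential along $L$ is designed so that $\Phi^*\omega$ and the canonical form $\omega_{\mathrm{can}}=-d\lambda_{\mathrm{can}}$ agree on $TT^*L|_L$. (Both 2-forms vanish on the zero section and pair the vertical with the horizontal direction according to the same canonical pairing.)

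Next, I would apply Moser's trick to the family $\omega_t = (1-t)\omega_{\mathrm{can}}+t\Phi^*\omega$ on a small enough neighborhood $T^*_rL$. On $L$ the two forms coincide, so by continuity all $\omega_t$ are symplectic for $r$ small enough. The difference $\alpha=\Phi^*\omega-\omega_{\mathrm{can}}$ is closed and vanishes on $L$; a relative Poincar\'e lemma (using fiberwise retraction of $T^*_rL$ onto $L$ together with the standard homotopy operator) produces a 1-form $\beta$ with $d\beta=\alpha$ and $\beta|_L=0$. Define the time-dependent vector field $X_t$ by $\iota_{X_t}\omega_t=-\beta$. Because $\beta$ vanishes on $L$, so does $X_t$, so the flow $\psi_t$ fixes $L$ pointwise and is defined on a possibly smaller tubular neighborhood for all $t\in[0,1]$. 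A direct computation
\[
\frac{d}{dt}\psi_t^*\omega_t = \psi_t^*(\mathcal{L}_{X_t}\omega_t+\partial_t\omega_t) = \psi_t^*(d\iota_{X_t}\omega_t+\alpha) = \psi_t^*(-d\beta+\alpha) = 0
\]
yields $\psi_1^*\Phi^*\omega=\omega_{\mathrm{can}}$. Setting $r$ small enough that $\psi_1$ is defined on all of $T^*_rL$ and takes values in $U_0$, the composition $\Phi\circ\psi_1:T^*_rL\to U$ is the desired symplectomorphism, and it fixes $L$ pointwise by construction.

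The main obstacle is the first step: arranging that $\Phi^*\omega$ and $\omega_{\mathrm{can}}$ agree on $TT^*L$ restricted to the zero section, not merely that both restrict to zero on the zero section itself. This is where the canonical identification $NL\simeq T^*L$ coming from $\omega$ (rather than any arbitrary identification) is essential; any other choice would leave a residual linear discrepancy on $L$ that the Moser argument cannot absorb, since the primitive $\beta$ would fail to vanish along $L$ and the resulting isotopy $\psi_t$ would move $L$. Once this pointwise matching along $L$ is achieved, the Moser argument and the shrinking of $r$ are routine.
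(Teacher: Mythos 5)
The paper states this as a cited theorem (Weinstein~\cite{Weinstein71}, Theorem~6.1) and supplies no proof of its own, so there is nothing to compare against; I will only evaluate your argument for correctness. Your plan is the standard Moser-trick proof, and the Moser part of the argument is handled correctly (the forms agree along~$L$, the relative Poincar\'e lemma produces a primitive $\beta$ vanishing \emph{pointwise} along~$L$, the flow fixes~$L$, and shrinking $r$ makes the flow complete to time~$1$).

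There is, however, a genuine gap in the first step, precisely at the point your parenthetical explanation glosses over. You list two conditions -- both forms vanish on the zero section, and both pair the vertical with the horizontal factor by the canonical pairing -- but you omit the third: the \emph{vertical--vertical} pairing of $\Phi^*\omega$ along the zero section must also vanish. At $(x,0)$ one has
\[
(\Phi^*\omega)\bigl((a,\alpha),(b,\beta)\bigr)=\alpha(b)-\beta(a)+\omega(v_\alpha,v_\beta)\, ,
\]
where $v_\alpha,v_\beta\in (T_xL)^{\perp_g}$ are the images of $\alpha,\beta$ under your composite identification, and the extra term $\omega(v_\alpha,v_\beta)$ vanishes if and only if $(T_xL)^{\perp_g}$ is a Lagrangian subspace of $T_xM$. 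This is where the choice of metric matters: it fails for an arbitrary Riemannian metric, so the throwaway ``for instance, the one associated to $J_0$'' is actually load-bearing and must be upgraded to a requirement. If $J_0$ is $\omega$-\emph{compatible} (not merely tame), then $(TL)^{\perp_{g_{J_0}}}=J_0\,TL$ is Lagrangian and the term disappears; equivalently, one can bypass the metric altogether by fixing a Lagrangian complement subbundle $W\subset TM|_L$ to $TL$, identifying $W\cong T^*L$ via $\omega$, and exponentiating along~$W$. Your closing remark that only the $\omega$-identification $NL\cong T^*L$ matters is thus incomplete: the $\omega$-identification fixes the horizontal--vertical block, but without a Lagrangian realization of the normal bundle the vertical--vertical block of $\Phi^*\omega$ leaves exactly the kind of residual linear discrepancy along~$L$ that you correctly warn Moser's argument cannot absorb.
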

By abuse of notations, we denote a point in $U$ by its coordinates $(p,q)$ in $T^*_rL$. Fix a non-increasing function $\sigma:[0,r]\rightarrow [0,1]$ equal to $1$ on $[0,r/3]$, and to $0$ on $[2r/3,r]$. Set
\[K(z)=\left\{\begin{matrix} \epsilon f(q)\sigma(|p|) & \mbox{ if } z=(q,p)\in U\, ,\\ 0 & \mbox{ otherwise.} \end{matrix} \right.\]
For $\epsilon<r/3\|df\|$, the Hamiltonian flow of $K$ maps $L$ to the graph of $\epsilon df$ (a Lagrangian submanifold of $T^*_rL$ viewed in $U$). Thus, the $L$-orbits of $K$ are constant and equal to the critical points of~$f$. Each critical point $x$ of~$f$ can be completed with a disk $w$ bounded by $L$ to form a capping $L$-orbit $[x,w]$. Assuming $\epsilon \|f\|< b<c$, its action $\epsilon f(x)-\omega(w)$ belongs to the interval $(-c,b]$ iff $\omega(w)=0$. The Conley-Zehnder and Morse indices are equal. It thus follows that:
\[CF_*^{(-c,b]}(L,\omega;K,J_0)=CM_*(f,g)\left(\otimes \Lambda^{loc}_L\right)\, .\]
where $\Lambda^{loc}_L$ is a "local version" of Novikov ring.

Let $u:\BBB\rightarrow M$ be a Floer continuation strip for the constant homotopy $(K,J)$ from $\xbar$ to $\ybar$, with $-c<\Acc_K(\ybar)\leq\Acc_K(\xbar)\leq b$. Then, the energy of $u$ is equal to $\epsilon(f(x)-f(y))\leq \epsilon\|f\|$. For $\epsilon>0$ small enough, $u$ must lie inside $U_r$ (see Proposition \ref{prop:BoundsEnergy}). Thus, the maximum principle implies that $u$ must lie on $L$, and hence is constant in $t$.

In other words, $u(s,t)=v(s)$, where $v$ is a anti-gradient flow line of $f$. Thus,
\[HF_*^{(-c,b]}(L,\omega;K,J_0)=HM_*(f,g)\left(\otimes \Lambda^{loc}_L\right)\, .\]
\subsubsection{The factorization.}
The key to proving theorem \ref{thm:Chekhanov} is a factorization of the identity on $HM_*(f,g)\otimes \Lambda^{loc}_L$ through $HF_*^{(-c,b]}(L,H,\omega,J_0)$. This factorization will be obtained by adapting the continuation maps defined in subsection \ref{Subsection:ContinuationMaps}.\espace

Fix $\epsilon<c-C_-$ and $\epsilon<c-C_+$, and then $b>\epsilon+C_+$ and $b>\epsilon+C_-$, with $C_{\pm}=\|H-K\|_{\pm}<a$. Hence, we have $b+C_-<2c-\epsilon$. Let $K_s$ be the linear homotopy from $K$ to $H$ defined by \[K_{s,t}=\beta(s)H_t+(1-\beta(s))K_t\, .\] Fix $\{J_s\}$ be a compact homotopy from $J_0$ to $\{J_t\}$ and $\{J_s'\}$ be a compact homotopy from $\{J_t\}$ to $J_0$ such that the pairs $(K_s,J_s)$ and $(K_{-s},J_s')$ meet the required transversality conditions. Then, we consider the continuation maps defined by $(K_s,J_s)$ and $(K_{-s},J_{s}')$ 
\begin{align} \Psi_1:& CF_*^{(-c,b]}(L,K,\omega,J_0) \longrightarrow CF_*^{(-c,b]}(L,H,\omega,J_0)\\ \Psi_2: &CF_*^{(-c,b]}(L,H,\omega,J_0) \longrightarrow CF_*^{(-c,b]}(L,K,\omega,J_0)\, . \end{align}
Note that the present situation differs from the general case, as we do not translate the action windows. We assert:
\begin{prop}\label{prop:2.4} The maps $\Psi_1$ and $\Psi_2$ commute with the boundary operators. Moreover, the induced map in homology go inside the following commutative diagram: \[\xymatrix{ & HF_*^{(-c,b]}(L,\omega;H,J_0)\ar[rd]^{\Psi_2} & \\ HF_*^{(-c,b]}(L,\omega;K,J_0)\ar[ru]^{\Psi_1}\ar[rr]_{\id} & & HF_*^{(-c,b]}(L,\omega;K,J_0)\, . } \] \end{prop}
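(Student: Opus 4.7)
The plan is to treat $\Psi_1$ and $\Psi_2$ as standard continuation maps (following proposition~\ref{prop:DefinitionContinuationMap}), first checking well-definedness on the fixed filtered complex $CF^{(-c,b]}$ without action shifts, and then showing that $\Psi_2\circ\Psi_1$ is chain-homotopic to the identity continuation map for the constant homotopy at $K$, which one verifies acts as $\id$ on $CF_*^{(-c,b]}(L,\omega;K,J_0)$.

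For well-definedness, I would repeat the argument of proposition~\ref{prop:DefinitionContinuationMap}. For a Floer continuation strip $u$ for the homotopy $(K_s,J_s)$ from $\xbar$ with $\Acc_K(\xbar)\in(-c,b]$ to $\ybar$ with $\Acc_H(\ybar)\in(-c,b]$, lemma~\ref{lemma:A3} gives
\[E(u)\;\leq\;\Acc_K(\xbar)-\Acc_H(\ybar)+C_+\;\leq\;(b+c)+C_+\;<\;\hbar_L(J_0),\]
the last inequality using the hypothesis $b+C_+<c$ together with $2c\leq\hbar_L(J_0)$. Hence no bubbling of holomorphic spheres or disks can occur. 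The broken-trajectory action constraints depicted in figure~\ref{Figure:Explicative} then ensure that any intermediate capping orbit arising in a limit has action in $(-c,b]$, so the standard breaking argument yields $\partial\Psi_1=\Psi_1\partial$; the case of $\Psi_2$ is symmetric, with $C_-$ replacing $C_+$.

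For the composition, I would introduce a one-parameter family $\{(K_s^R,J_s^R)\}_{R\in[0,+\infty]}$ of compact homotopies from $K$ to $K$: at $R=0$, it is the constant homotopy $(K,J_0)$; at $R=+\infty$, it is the concatenation of $(K_s,J_s)$ on $(-\infty,0]$ with $(K_{-s},J'_s)$ on $[0,+\infty)$ with a neck of length $2R$ inserted at $s=0$; for finite $R$, interpolate smoothly through admissible homotopies with $J_s^R\in\Ucc(J_0)$. Counting $\sharp_2$ of the zero-dimensional moduli of pairs $(R,u)$ with $R\in[0,+\infty)$ and $u$ a parametrized continuation strip of index $-1$ defines a chain homotopy $\Theta$ between the continuation maps at $R=0$ and $R=+\infty$. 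At $R=0$, only constant strips contribute (as in the Morse-theoretic calculation at the end of subsection~\ref{Subsection:ProofofTheoremRational}), giving $\id$. At $R=+\infty$, the standard gluing/breaking argument of \cite{Salamon,McDSal} identifies the limit contribution with the composition: the moduli degenerates into pairs $(u_1,u_2)\in\Mcc(\xbar,\ybar;K_s,J_s)\times\Mcc(\ybar,\zbar;K_{-s},J'_s)$ with $\ybar$ a capping $L$-orbit of $H$ of action in $(-c,b]$, which are precisely the pairs counted by $\Psi_2\circ\Psi_1$. Hence $\Psi_2\Psi_1-\id=\partial\Theta+\Theta\partial$, giving the commutative triangle in homology.

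The main obstacle is to maintain uniform control, across the entire two-parameter family of deformations (the parameter $R$ and the $\RRR$-translations used in gluing), both of the energies (so they stay below $\hbar_L(J_0)$ and prevent bubbling of holomorphic disks or spheres) and of the actions of intermediate capping orbits (so they stay in $(-c,b]$). The parameter inequalities $b+C_\pm<c$ and $b+C_-<2c-\epsilon$ imposed at the start of subsection~\ref{Subsection:ProofofTheoremRational} are designed precisely to afford the slack needed for these a priori estimates on every homotopy appearing in the family; once they are secured, the transversality and compactness theory of \cite{Floer88,Salamon,McDSal} applies verbatim to produce the required chain homotopy $\Theta$.
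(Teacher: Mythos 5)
Your overall architecture matches the paper's: first establish that $\Psi_1,\Psi_2$ are chain maps by re-running the continuation argument, then interpolate between the concatenation $\Psi_2\circ\Psi_1$ and the identity via a one-parameter ($R$-indexed) family of compact homotopies from $K$ to $K$. The paper's parametrization uses $R\in\RRR$ with $R\to-\infty$ giving the constant homotopy and $R\to+\infty$ the split one, while you use $R\in[0,\infty]$ with $R=0$ constant; this is only a cosmetic difference and either choice yields the cobordism giving $\Psi_2\Psi_1-\id=\partial\Gamma+\Gamma\partial$.

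However, there is a genuine gap in the way you control the a priori estimates, and it is the crux of the rational case. Your energy bound
\[E(u)\;\leq\;\Acc_K(\xbar)-\Acc_H(\ybar)+C_+\;\leq\;(b+c)+C_+\;<\;\hbar_L(J_0)\]
is justified by invoking the inequality $b+C_+<c$. That inequality is \emph{not} among the parameter constraints of subsection~\ref{Subsection:ProofofTheoremRational}; the paper imposes the opposite-looking conditions $\epsilon<c-C_\pm$ and $b>\epsilon+C_\pm$ (together with $\epsilon\|f\|<b<c$), which do not force $b+C_+<c$. The argument you want uses the crucial consequence of rationality: a capping $L$-orbit $\xbar=[x,w]$ of the Morse--Weinstein Hamiltonian $K$ has action $\epsilon f(x)-\omega(w)$ with $\omega(w)\in 2c\ZZZ$, so its action lies in $(-c,b]$ \emph{only if} $\omega(w)=0$, i.e.\ $|\Acc_K(\xbar)|\leq\epsilon\|f\|\leq\epsilon$. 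With $\Acc_K(\xbar)\leq\epsilon$ rather than $\leq b$, the bound becomes $E(u)<\epsilon+c+C_+=(\epsilon+C_+)+c<2c\leq\hbar_L(J_0)$, which is exactly what the hypothesis $\epsilon<c-C_+$ buys. Your version both rests on an unavailable inequality and misses the fact that rationality pins the $K$-actions near zero.

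The same omission affects the chain-map verification. You write that "the broken-trajectory action constraints depicted in figure~\ref{Figure:Explicative} then ensure" the intermediate cappings stay in $(-c,b]$, but figure~\ref{Figure:Explicative} was drawn for proposition~\ref{prop:DefinitionContinuationMap}, where the target window is shifted to $(a+C,b+C]$; here both windows are the \emph{same} $(-c,b]$ and the shift argument does not apply verbatim. The paper's Step~1 handles this by treating two explicit breaking configurations and, for the one where an intermediate capping $L$-orbit $\ybar$ of $K$ appears, derives $\Acc_K(\ybar)>\epsilon-2c$ and then invokes that there is \emph{no} capping $L$-orbit of $K$ with action in $(\epsilon-2c,-\epsilon)$ (rationality again) to conclude $\Acc_K(\ybar)\geq-\epsilon>-c$. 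Without this discreteness-of-actions input, the estimate $\Acc_K(\ybar)>\epsilon-2c$ by itself does not put $\ybar$ back into the window. The same care is needed along the entire $R$-family in Step~3. So while your outline is the right one, the two places where rationality of $L$ must be used to close the action window are exactly the two places your argument leaves open or uses a hypothesis the paper does not make.
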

The proof is given step by step.

\begin{proof}[Step $1$. The map $\Psi_1$ commutes with the boundary operators.] The arguments are similar to those previously presented. We just have to check that the involved capping $L$-orbits in figure \ref{Figure:Explicative} belong to the action window $(-a,b]$. The two problematic configurations are the following:
\begin{enumerate}
\item Let $\xbar$, $\ybar$ and $\zbar$ be capping $L$-orbits respectively of $K$, $H$ and $H$. Fix a Floer continuation strip for the homotopy $(K_s,J_s)$ from $\xbar$ to $\ybar$ and a Floer continuation strip for the constant homotopy $(H,J)$ from $\ybar$ to $\zbar$. Assume $\Acc_K(\xbar)$ and $\Acc_H(\zbar)$ to belong to $(-c,b]$. Then, the estimates (\ref{Eqn:EstimateEnergies}) give: $-c\leq \Acc_H(\zbar)\leq \Acc_H(\ybar)$ and $\Acc_H(\ybar)\leq \Acc_{K}(\xbar)+C_+$. Recall $\Acc_K(\xbar)\leq \epsilon$ and $b>C_++\epsilon$. Thus, $\Acc_H(\ybar)$ belongs to the action window $(-c,b]$.
\item Now, let $\xbar$, $\ybar$ and $\zbar$ be capping $L$-orbits respectively of $K$, $K$ and $H$. Fix a Floer continuation strip for the constant homotopy $(K,J)$ from $\xbar$ to $\ybar$ and a Floer continuation strip for the  homotopy $(H_s,J_s)$ from $\ybar$ to $\zbar$. Once again, assume $\Acc_K(\xbar)$ and $\Acc_H(\zbar)$ to belong to $(-c,b]$. Then the estimates (\ref{Eqn:EstimateEnergies}) give: $\Acc_K(\ybar)\leq \Acc_K(\xbar)$, and $\Acc_K(\ybar)\geq \Acc_H(\zbar)-C_+\geq -c-C_+>\epsilon-2c$. Recall that there is no capping $L$-orbit of $K$ whose action is between $\epsilon-2c$ and $-\epsilon$. Thus, $\Acc_K(\ybar)$ must belong to the action window $(-c,b]$.
\end{enumerate}
Then, the standard arguments show that the map $\Psi_1$ commutes with the boundary operators.\end{proof}
\begin{proof}[Step 2. The map $\Psi_2$ commutes with the boundary operators,] for similar reasons.\end{proof}
\begin{proof}[Step 3. There exists an homotopy map between the composition $\Psi_2\circ \Psi_1$ and the identity.] Introduce the parametrized family $\{K_{s,t}^R\}$ of compact homotopies from $K$ to $K$:
\[K_{s,t}^R=\beta(s+R)\beta(-s+R)H_t+(1-\beta(s+R)\beta(-s+R))K\, .\]
Note $X_{s,t}^R$ its Hamiltonian vector field. Set $\KKK=\{K_{s,t}^R\}$, and choice a generic data $\JJJ=\{J_{s,t}^R\}$ with good asymptotic behavior. For any pair $(x,z)$ of critical points of $f$, the space
\[\Mcc(x,z;\KKK,\JJJ)=\left\{(R,u),\, R\in \RRR\mbox{ and } u\in \Mcc(x,z;\{K_{s,t}^R\},\{J_{s,t}^R\})\right\}\]
is generically a manifold of dimension $\mu_M(x)-\mu_M(z)+1$. Generically,
\begin{itemize}
\item Whenever $\mu_M(z)=\mu_M(x)+1$, the zero-dimensional manifold $\Mcc(x,z;\KKK,\JJJ)$ is compact then finite ;
\item Whenever $\mu_M(z)=\mu_M(x)$, the non-compact components of the one-dimensional manifold $\Mcc(x,z;\KKK,\JJJ)$ can be completed into a cobordism between the sets
\begin{align}
 & \Mcc(x,z;K,J_0)=\left\{\begin{matrix}\{x\} & \mbox{if } x=z\\ \emptyset & \mbox{otherwise,}\end{matrix}\right. & & \label{set:01}\\
 & \Mcc(x,\ybar;\{K_{s,t}\},\{J_{s,t}\})\times \Mcc(\ybar,z; \{K_{-s,t}\},\{J'_{s,t}\}) &  & \mbox{for } \mu_{CZ}(\ybar)=\mu_M(x)\, , \label{set:02}\\
 & \Mcc(x,y;\KKK,\JJJ)\times \Mchat(y,z;K,J_0) & & \mbox{for } \mu_{M}(y)=\mu_M(z)+1\, ,\label{set:03}\\
 & \Mchat(x,y;K,J_0)\times \Mcc(y,z;\KKK,\JJJ) & & \mbox{for } \mu_M(y)=\mu_M(z)-1\, .\label{set:04}
\end{align}
\end{itemize} The capping orbits of $H$ appearing in (\ref{set:02}) have actions in $(-c,b]$.

Then, set provisionally 
\[\Gamma:\begin{matrix} CM_k^(f) & \rightarrow & CM_{k+1}(f)\\ x & \mapsto & \sum \sharp_2 \Mcc(x,z;\KKK,\JJJ_0)\, z\, .\end{matrix}\]
This map is well-defined, due to the first point. The existence of the cobordism described above can be algebraically translated into
\[\Psi_2\circ\Psi_1-\id=d\Gamma+\Gamma d\, .\]
Indeed, counting the elements in the sets (\ref{set:02}) gives $\Psi_2\circ \Psi_1$. The coefficient behind $z$ in the expression of $d\Gamma x$ (resp. $\Gamma d x$) is exactly the number modulo 2 of elements in sets (\ref{set:04}) (resp. in sets (\ref{set:03})). Thus, the map $\Gamma$ is an homotopy map between the identity and $\Psi_2\circ \Psi_1$. We have done.
\end{proof}
See appendix~\ref{Subsection:RemarksOriginalProofChekhanov} for remarks on the slight modifications to the original proof of Chekhanov.

\newpage
\section{Kerman's proof.}\label{Section:Kerman'sProof}
In this section, Theorem \ref{thm:Chekhanov} is proved for all compact Lagrangian submanifolds. In \cite{Kerman1}, Kerman explains how the identity $CM(f)\rightarrow CM(f)$ factors through $CF(L,\omega;H)$. This factorisation can be expressed in homological terms as follows.
\subsection{Definition of the relative PSS maps.}\label{Subsection:DefinitionRelativePSS}
\subsubsection{\!\!\!\!} Throughout this section, we fix a non-decreasing map $\beta:\RRR\rightarrow [0,1]$ equal to $0$ when $s\ll 0$ and to $1$ when $s\gg 0$. The precise definition of $\beta$ has no importance\footnote{Nevertheless, the monotonicity of $\beta$ is a crucial point to get the estimates mentioned on the energies.}. Let $X$ and $Y$ be two smooth submanifolds of $L$. Set:
\begin{align}
\Ncc_r(\ybar;H,J) & = \left\{d:\RRR\times \SSS^1\rightarrow M,\, \begin{matrix} \partial_sd+J_{s,t}(d)\left[\partial_td-\beta(s)X_t(d)\right]=0\\ \lim_{s\rightarrow \infty} d(s,t)=x(t) \mbox{ and } [x,d]=\xbar \end{matrix}\right\}\, ,\label{set:NrxHJ}\\
\Ncc_l(\xbar;H,J) & =\left\{e:\RRR\times \SSS^1\rightarrow M,\, \begin{matrix} \partial_se+J_{-s,t}(u)\left[\partial_te-\beta(-s)X_t(e)\right]=0 \\ \lim_{s\rightarrow -\infty} e(s,t)=x(t) \mbox{ and } [x,-e]=\xbar \end{matrix}\right\}\, ,\label{set:NlxHJ}\\ 
\Ncc(X,\ybar;H,J) & = \left\{(x,d)\in X\times \Ncc_r(\ybar;H,J),\, \lim_{s\rightarrow -\infty}d(s,t)=x\right\}\, ,\label{set:NrXxHJ}\\ 
\mbox{and }\Ncc(\xbar,Y;H,J) & = \left\{(e,y)\in \Ncc_l(\xbar;H,J)\times Y,\, \lim_{s\rightarrow +\infty} e(s,t)=y\right\}\, .\label{set:NlXxHJ} \end{align}

Elements of $\Ncc_r(\xbar;H,J)$ may be viewed as holomorphic half-disks with an Hamiltonian perturbation on their boundaries. In the definition of $\Ncc_l(\xbar;H,J)$, the notation $-e$ denotes the map $(s,t)\mapsto e(-s,t)$, which may be viewed as an anti-holomorphic half-disk with an Hamiltonian perturbation on its boundary. Lemma \ref{lemma:A3} gives the following estimates on the energies of perturbed (anti-)holomorphic half-disks:
\begin{align}
\forall d\in \Ncc_r(\xbar; H,J),\quad 0\leq E(d)\leq & -\Acc_H(x,d)+\|H\|_+\\
\forall e\in \Ncc_l(\xbar;H,J),\quad 0\leq E(e)\leq &
\Acc_H(x,-e)+\|H\|_-\, .
\end{align}
\noindent The expected dimensions are:
\begin{align*}
\dim \Ncc_r(\ybar;H,J) & = n-\mu_{CZ}(\ybar)\, ,\\
\dim \Ncc_l(\xbar;H,J) & = \mu_{CZ}(\xbar)\, ,\\
\dim \Ncc(X,\ybar;H,J) & = \dim X-\mu_{CZ}(\xbar)\, ,\\
\dim \Ncc(\xbar,Y;H,J) & = \mu_{CZ}(\ybar)+\dim Y-n\, .
\end{align*}
For generic choices, those spaces are well-defined manifolds.

\subsubsection{\!\!\!\!} Recall $\|H\|_+<b$, $\|H\|_-<c$ and $b+c<\hbar$. Set:
\begin{align}
\Phi_1&:\begin{matrix} CM_{k}(f,g) & \longrightarrow & CF_k^{(-c,b]}(L,\omega;H)\\ y & \longmapsto & \sum \sharp \Ncc\left(W^u(y,\nabla f), \xbar;H,J\right) \, \xbar\, , \end{matrix}\label{Eqn:Sets27}\\
\mbox{and } \Phi_2&:\begin{matrix} CF_k^{(-c,b]}(L,\omega;H) & \longrightarrow & CM_{k}(f,g) \\ \xbar & \longmapsto & \sum \sharp_2 \Ncc\left(\xbar,W^s(y,\nabla f);H,J\right) \, y\, , \end{matrix}\label{Eqn:Sets28}\end{align}
\noindent Observe that the sets appearing in (\ref{Eqn:Sets27}) and (\ref{Eqn:Sets28}) are finite.
\begin{figure}
\centering
\includegraphics[height=55mm]{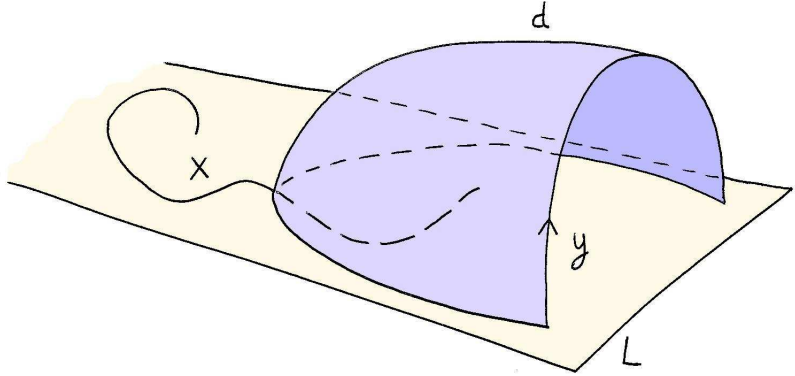}
\caption{Representations of elemens in the spaces (\ref{set:NrXxHJ}).}
\end{figure}
\begin{thm}
With the above notations, $\Phi_1$ and $\Phi_2$ are chain maps. The induced maps in homology are called the PSS maps: 
\begin{align*} \Phi_1: & HM_*(f,g)  \longrightarrow HF_*^{(-c,b]}(L,\omega;H,J_0) \\ \mbox{ and } \Phi_2 :& HF_*^{(-c,b]}(L,\omega;H,J_0) \longrightarrow HM_*(f,g)\, . \end{align*}
\end{thm}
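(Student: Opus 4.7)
The plan is to establish the chain-map identities
\[
\partial \circ \Phi_1 = \Phi_1 \circ \partial_g \quad\text{and}\quad \partial_g \circ \Phi_2 = \Phi_2 \circ \partial
\]
by the usual PSS cobordism argument (as in \cite{PSS}), adapted to the filtered framework of subsection~\ref{Subsection:FilteredLagrangianFloerHomology}. The induced morphisms on homology then automatically qualify as the announced PSS maps. I would only treat $\Phi_1$ in detail, as the argument for $\Phi_2$ is symmetric (reverse $s\mapsto -s$ and exchange $W^u$ with $W^s$).

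First I would fix transversality data. Applying the Floer--Sard--Smale scheme of subsubsection~\ref{Subsection:ProofOfLemma} to pairs $(g,\JJJ)$ with $\JJJ\in \Ucc(J_0)$, one arranges $\Ncc(W^u(y,\nabla f),\xbar;H,J)$ to be a smooth manifold of the expected dimension $\mu_M(y)-\mu_{CZ}(\xbar)$, with the evaluation map at $-\infty$ transverse to the unstable manifolds. Compactness is then driven by the energy estimate recalled just before the theorem: for any $(x,d)$ in such a space,
\[
E(d) \leq -\Acc_H(\xbar)+\|H\|_+ < c + b < \hbar_L(J_0),
\]
using $\Acc_H(\xbar) > -c$, $\|H\|_+<b$ and $b+c<\hbar_L(J_0)$. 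This bound rules out any disk bubble and, generically, any sphere bubble, while proposition~\ref{prop:LocalisationFloerStrips} supplies the uniform $C^0$-localization needed to apply Gromov-type compactness. When $\mu_M(y)=\mu_{CZ}(\xbar)$ the moduli space is therefore a compact zero-manifold, hence finite, and the formal definition of $\Phi_1$ makes sense.

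Next I would analyze the one-dimensional case $\mu_M(y)-\mu_{CZ}(\xbar)=1$. By the preceding compactness, the non-compact ends of $\Ncc(W^u(y,\nabla f),\xbar;H,J)$ arise from exactly two degeneration mechanisms. \emph{Morse breaking} of the gradient line joining $y$ to $x\in W^u(y,\nabla f)$ through an intermediate critical point $y'$ with $\mu_M(y')=\mu_M(y)-1$ produces the boundary stratum
\[
\Mchat(y,y';f,g)\times\Ncc\bigl(W^u(y',\nabla f),\xbar;H,J\bigr).
\]
\emph{Floer breaking} of the perturbed half-disk $d$ at its Floer end produces
\[
\Ncc\bigl(W^u(y,\nabla f),\xbar';H,J\bigr)\times\Mchat(\xbar',\xbar;H,J),\qquad \mu_{CZ}(\xbar')=\mu_{CZ}(\xbar)+1.
\]
Since a compact one-manifold has an even number of boundary points, counting these two strata modulo~$2$ for each pair $(y,\xbar)$ yields the coefficient identity $(\Phi_1\partial_g y)_{\xbar}+(\partial\Phi_1 y)_{\xbar}=0$, which is the desired chain-map equation.

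The main obstacle, specific to the filtered theory, will be checking that every intermediate capping orbit $\xbar'$ appearing above actually belongs to the action window $(-c,b]$, so that it is a bona fide generator of $CF_*^{(-c,b]}(L,\omega;H)$. Here two twin facts do the work: the Floer action is non-increasing along Floer trajectories, so $\Acc_H(\xbar')\geq\Acc_H(\xbar)>-c$; and the energy inequality applied to the surviving perturbed half-disk asymptotic to $\xbar'$ forces $\Acc_H(\xbar')\leq\|H\|_+<b$. The symmetric bounds play the same role in the treatment of $\Phi_2$. This is exactly why the hypothesis $\|H\|_+<b$, $\|H\|_-<c$, $b+c<\hbar_L(J_0)$ is sharp for the construction to run, and what allows the PSS formalism to work in the Lagrangian setting without any Maslov-index assumption on disks bounded by $L$.
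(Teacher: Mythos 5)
Your proposal is correct and follows essentially the same strategy as the paper: a PSS-type cobordism argument, with the two key action-window checks (non-increasing action along the Floer end giving the lower bound $>-c$, and the energy estimate on the remaining perturbed half-disk giving the upper bound $<b$) ensuring the intermediate capping orbit stays in $(-c,b]$. The only nominal difference is that you spell out $\Phi_1$ while the paper details $\Phi_2$ and declares $\Phi_1$ analogous; this is immaterial since the two cases are mirror images of each other.
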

\begin{proof} We only prove that $\Phi_2$ is a chain map. (The proof for $\Phi_1$ uses similar arguments.)
Fix a capping $L$-orbit $\xbar$ of $H$, with action in $(-c,b]$, and consider a critical point $y$ of $f$ with $\mu_M(y)=\mu_{CZ}(\xbar)-1$. Let $(u_n)$ be a sequence in $\Ncc_l(\xbar;H,J)$ with $\lim_{s\rightarrow \infty} u_n(s,t)=x_n\in W^u(y,\nabla f)$.

As $E(u_n)\leq \Acc_H(\xbar)+\|H\|_-<\hbar$, no bubbling off of holomorphic spheres or disks may occur in  the limit set of $(u_n)$. Up to an extraction, the sequence $(u_n)$ converges to a Floer continuation strip $v$ for the homotopy $\beta(-s)H$ from a capping $L$-orbit $\xbar '$ to a point $z$ of $L$, with $\Acc_H(\xbar ')\leq \Acc_H(\xbar)$. Moreover, we have $\Acc_H(\xbar ')\geq -\|H\|_-> -c$. As its action belongs to $(-c,b]$, the capping $L$-orbit $\xbar '$ is non-degenerate.

After an extraction if necessary, we obtain a broken Floer continuation strip from $\xbar$ to $\xbar '$ as the different limits of $(s_n\cdot u_n)$ for $s_n\rightarrow -\infty$. Note that
\[\lim_{T\rightarrow \infty} \lim_{n\rightarrow \infty}
\int_{T}^{\infty}\int_0^1|\partial_su_n|^2dsdt=0\, .\] It then follows that whenever $s_n\rightarrow \infty$, the sequence $(s_n\cdot u_n)$ converges to $z$ uniformly  on compact subsets of $\BBB$. Moreover, $z$ must be the limit of $(x_n)$.

Indeed, let $z'$ be an accumulation point of $(x_n)$. We may assume $x_n\rightarrow z'$ to simplify the notations. Choose $s_n>n$ such that $u(s_n,0)$ converges to $z'$. As $(s_n\cdot u_n)$ converges to $x$ uniformly on compact subsets, it follows that $z'=x$. In other words, the sequence $(x_n)$ has a unique accumulation point, namely $z$, and hence converges to $z$ as $L$ is compact. 

This limit~$z$ must belong to the adherence of $W^s(y,\nabla f)$. Thus, there exists a critical point $y'$ of $f$, with $\mu_M(y')\geq \mu_M(y)$, such that $x\in W^s(y',\nabla f)$. Moreover, there exists a broken antigradient flow from $y'$ to $y$. It then follows that $\Ncc(\xbar',W^u(y',\nabla f);H,J)$ is non empty. Thus, its dimension $\mu_{CZ}(\xbar')-\mu_M(y')$ must be nonnegative. Hence,
\[\mu_{CZ}(\xbar)\geq \mu_{CZ}(\xbar')\geq \mu_M(y')\geq
\mu_M(y)=\mu_{CZ}(\xbar)-1\, .\] Different cases must be considered: \begin{itemize}
\item $\mu_{CZ}(\xbar')\neq \mu_M(y')$ gives $\mu_{CZ}(\xbar')=\mu_{CZ}(\xbar)$ and $\mu_{M}(y')=\mu_M(y)$. The broken Floer continuation strip from $\xbar$ to $\xbar '$ we obtained above must be constant. Moreover, as the critical point $y'$ belongs to the adherence of $W^u(y,\nabla f)$, it must be equal to $y$. Then, the limit $v$ belongs to $\Ncc(\xbar,W^u(y,\nabla  f);H,J)$.
\item $\mu_{CZ}(\xbar)>\mu_{CZ}(\xbar')$ gives $\mu_{CZ}(\xbar')=\mu_M(y')=\mu_M(y)$. As above, $y=y'$. Moreover, the broken Floer continuation map from $\xbar$ to $\xbar'$ is of index $1$. Thus, there is no intermediate capping $L$-orbit of $H$, and we get an element of $\Mchat(\xbar,\xbar ';H,J)\times \Ncc(\xbar ',W^u(y,\nabla f);H,J)$. 
\item The last case to consider is the following: $\mu_{CZ}(\xbar)=\mu_{CZ}(\xbar')=\mu_M(y')$. Thus, $\xbar'=\xbar$, and the limit $v$ belong to the set $\Ncc_l(\xbar,W^u(y',\nabla
f);H,J)$.
\end{itemize}

Considering this study, the one-dimensional manifold $\Ncc_l(\xbar,W^u(y,\nabla f);H,J)$ may be compactified into a cobordism between:
\begin{align}
 & \Ncc(\xbar,W^u(y',\nabla f);H,J)\times \Mchat(y',y,\nabla f) & \mbox{ for }
 & \mu_M(y')=\mu_M(y)+1\, ,\\
\mbox{and } & \Mchat(\xbar,\xbar';H,J)\times \Ncc(\xbar',W^u(y,\nabla f);H,J) & \mbox{ for } & \mu_{CZ}(\xbar')=\mu_{CZ}(\xbar)-1\, .
\end{align}
Thus, we get:
\begin{align*}
\partial \Phi_2\xbar & =\sum_{\mu_{CZ}(\xbar)=\mu_M(y')=\mu_M(y)+1} \sharp_2 \Ncc(\xbar,W^u(y',\nabla f);H,J)\times \Mchat(y',y,\nabla f)\, y\\
 & = \sum_{\mu_M(y)=\mu_{CZ}(\xbar')=\mu_{CZ}(\xbar)-1}\sharp_2 \Mchat(\xbar,\xbar';H,J)\times \Ncc_l(\xbar',W^u(y,\nabla f);H,J)\, y\\
 & = \Phi_2\partial \xbar\, ,
\end{align*}
which proves that $\Phi_2$ commutes with the boundary operators, as wanted.
\end{proof}
\subsection{Proof of Theorem \ref{thm:Chekhanov}.}
In \cite{Kerman1}, Kerman proposed an approach to the Hamiltonian Floer theory under the quantum effects. We present here an adaptation of this approach for the Lagrangian Floer homology.
\begin{thm}
With the above notations, the composition of the PSS maps
\[HM_*(f,g)\rightarrow HF_*^{(-c,b]}(L,H,\omega)\rightarrow HM_*(f,g)\]
is the identity.
\end{thm}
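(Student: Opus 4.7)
The plan is to mirror Step~3 of the proof of Proposition~\ref{prop:2.4}: construct a one-parameter family of moduli spaces whose two limiting ends compute $\Phi_2\circ\Phi_1$ and the identity respectively, and whose interior furnishes a chain homotopy on the Morse complex. Concretely, I would introduce the parametrized Hamiltonian homotopies
\[ H^R_{s,t} = \beta(s+R)\,\beta(-s+R)\,H_t\, ,\qquad R\in \RRR\, , \]
together with a generic family $\JJJ = \{J^R_{s,t}\}$ of compact homotopies in $\Ucc(J_0)$. For each pair $(y_-, y_+)$ of critical points of $f$, define
\[ \Ncc(y_-, y_+; \HHH, \JJJ) = \left\{(R,u)\, ,\, u:\BBB \to M \right\}\, , \]
where $u$ has boundary on $L$, solves the Floer equation for $(H^R_{s,t}, J^R_{s,t})$, and satisfies $\lim_{s\to -\infty} u(s,\cdot) \in W^u(y_-,\nabla f)$ and $\lim_{s\to +\infty} u(s,\cdot) \in W^s(y_+,\nabla f)$. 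A standard transversality argument, following Section~\ref{Subsection:ProofOfLemma}, provides for generic $\JJJ$ a manifold of expected dimension $1+\mu_M(y_-)-\mu_M(y_+)$.

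I would then specialize to $\mu_M(y_-)=\mu_M(y_+)$ and compactify the resulting $1$-dimensional manifold. A direct variant of Lemma~\ref{lemma:A3} applied to the cutoff $\beta(s+R)\beta(-s+R)\in[0,1]$ gives the energy estimate $E(u)\leq \|H\|_++\|H\|_-<b+c\leq \hbar_L(J_0)$, which rules out bubbling off of holomorphic spheres or disks. Consequently the Gromov--Floer compactification of $\Ncc(y_-,y_+;\HHH,\JJJ)$ has the following four types of boundary strata:
\begin{align*}
 & \Mchat(y_-, y; \nabla f) \times \Ncc(y, y_+; \HHH, \JJJ) && \text{with } \mu_M(y) = \mu_M(y_-) - 1\, ,\\
 & \Ncc(y_-, y; \HHH, \JJJ) \times \Mchat(y, y_+; \nabla f) && \text{with } \mu_M(y) = \mu_M(y_+) + 1\, ,\\
 & \Ncc\bigl(W^u(y_-,\nabla f), \xbar; H, J\bigr) \times \Ncc\bigl(\xbar, W^s(y_+,\nabla f); H, J\bigr) && \text{at } R \to +\infty\, ,\\
 & W^u(y_-,\nabla f) \cap W^s(y_+,\nabla f) && \text{at } R \to -\infty\, .
\end{align*}
The third stratum arises because as $R\to+\infty$ the two bumps of $\beta(s+R)\beta(-s+R)H_t$ separate, forcing $u$ to split into a pair glued along a common capping $L$-orbit $\xbar$ of $H$, whose action lies in $(-c,b]$ by the same bookkeeping as in Proposition~\ref{prop:DefinitionContinuationMap}. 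The fourth stratum arises because for $R$ sufficiently negative the cutoff vanishes identically on $\RRR$, so $u$ is genuinely $J_0$-holomorphic with boundary on $L$ and asymptotic limits in $L$; the energy bound then forces $u$ to be constant, producing precisely the set $W^u(y_-,\nabla f)\cap W^s(y_+,\nabla f)$.

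Counting boundary points modulo $2$, the two Morse strata provide $\partial_g \Gamma + \Gamma \partial_g$, where $\Gamma : CM_*(f,g) \to CM_{*+1}(f,g)$ is defined by counting the zero-dimensional moduli spaces $\Ncc(y_-,y_+;\HHH,\JJJ)$ with $\mu_M(y_+)=\mu_M(y_-)+1$. The third stratum contributes exactly the matrix coefficient of $\Phi_2\circ\Phi_1$, while by Morse-Smale transversality the fourth stratum is empty unless $y_-=y_+$ and reduces to $\{y_-\}$ in that case, producing the identity. Since the signed boundary of a compact $1$-manifold vanishes modulo $2$, this yields
\[ \Phi_2\circ\Phi_1 - \id = \partial_g\Gamma + \Gamma\partial_g\, , \]
and passing to homology proves the theorem. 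The main obstacle I expect is the $R\to-\infty$ end: one must justify that $\Ncc(y_-,y_+;\HHH,\JJJ)$ really extends as a $1$-manifold with boundary canonically identified with $W^u(y_-)\cap W^s(y_+)$, ruling out exotic breakings through short perturbed strips that shrink onto Morse pieces, and verifying asymptotic transversality of the evaluation maps with the stable and unstable manifolds of $\nabla f$. Once the Lagrangian energy bound $b+c<\hbar_L(J_0)$ is in place and the standard PSS gluing of constant solutions with Morse trajectories is adapted to the parametrized family, this step is essentially routine but carries most of the technical weight of the argument.
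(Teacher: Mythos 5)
Your proposal reproduces the paper's own (sketched) argument: both use the $R$-parametrized family $\beta(s+R)\beta(-s+R)H_t$ to build a one-dimensional moduli space whose boundary strata give $\Phi_2\circ\Phi_1$ (at $R\to+\infty$), the identity via constant strips ($R$ large negative), and the two Morse-breaking strata furnishing the chain homotopy $\Gamma$. The main difference is cosmetic: your stable/unstable labeling is consistent with the definitions of $\Phi_1$ and $\Phi_2$, whereas the paper's display in its proof has the roles of $W^u$ and $W^s$ apparently swapped.
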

\begin{proof}[Skretch of the proof.]
Let $y$ and $z$ be two critical points of $f$ with the same Morse index $k$. The one-dimensional manifold $\Ncc(W^s(y,\nabla f),W^u(z,\nabla f);\HHH,\JJJ)$ (introduced in section \ref{Section:Gromov'sProof}) can be compactified into a cobordism between:
\begin{align}
 & W^s(y,\nabla f)\cap W^u(y,\nabla f)=\left\{\begin{matrix}  \{y\} & \mbox{if } y=x\, ,\\  \emptyset & \mbox{otherwise.}  \end{matrix}\right. &  \label{Set:Tm3.2Id}\\
 & \Ncc(W^s(y,\nabla f),\xbar;H,J)\times \Ncc(\xbar,W^u(y,\nabla  f);H,J) & \mbox{ for } & \mu_{CZ}(\xbar)=k\, ,\label{Set:Thm3.2PhiPhi}\\
 &\Ncc(W^s(y,\nabla f),W^u(z',\nabla f);\HHH,\JJJ)\times \Mchat(z',z,\nabla f) & \mbox{ for } & \mu_M(z')=k+1\, ,\label{Set:Thm3.2dGamma}\\
\mbox{and } & \Mchat(y,y',\nabla f)\times \Ncc(W^s(y',\nabla f),W^u(z,\nabla f);\HHH,\JJJ) & \mbox{ for } & \mu_M(y')=k-1\, .\label{Set:Thm3.2Gammad}
\end{align}
Set provisionally\footnote{Up to the end of the proof.}:
\[\Gamma:\begin{matrix} CM_k(f) & \longrightarrow & CM_{k+1}(f)\\ x & \longmapsto & \sum \sharp_2\Ncc\left(W^u(x,\nabla f),W^s(z,\nabla f);\HHH,\JJJ\right)\, z\, . \end{matrix}\]

The above cobordism gives the following relation: \[\Phi_2\circ \Phi_1-\id = d\Gamma+\Gamma d\, .\] More precisely, $d\Gamma$ and $\Gamma d$ count respectively the number of elements in the finite sets (\ref{Set:Thm3.2dGamma}) and (\ref{Set:Thm3.2Gammad}). The map $\Phi_2\circ \Phi_1$ counts the elements in the sets (\ref{Set:Thm3.2PhiPhi}). We have done.
\end{proof}
\subsection{Equivalence of the previous proofs.}
The equivalence between Gromov's proof and Kerman's proof is clear. We explain how Kerman's proof is related to Chekanov's proof. For the notations, refer to
subsections \ref{Subsection:ProofofTheoremRational} and \ref{Subsection:DefinitionRelativePSS}.
\begin{thm}
Assume $L$ to be rational with $\omega\pi_2(M,L)=2c\ZZZ$. To simplify, assume that there exists no disk bounded by $L$ with zero Maslov index and non-zero symplectic area. Let $K$ and $b$ be as in subsection \ref{Subsection:ProofofTheoremRational}. The PSS maps $\Phi_1$ and $\Phi_2$ are equal to the continuation maps $\Psi_1$ and $\Psi_2$ obtained via a linear compact homotopy from $K$ to $H$ and from $H$ to $K$.
\end{thm}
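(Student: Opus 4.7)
The strategy is to establish $\Psi_i = \Phi_i$ at the chain level ($i=1,2$) by constructing, for each pair, a parametric one-dimensional moduli space whose two boundaries are the moduli spaces defining the two maps. I treat $\Psi_1 = \Phi_1$; the identity $\Psi_2 = \Phi_2$ follows by reversing the $s$-coordinate. Under the identification $CF_*^{(-c,b]}(L,\omega;K,J_0) = CM_*(f,g)$ of subsection~\ref{Subsection:ProofofTheoremRational}, a critical point $y$ of $f$ of Morse index $k$ corresponds to a capping $L$-orbit $\ybar=[y,w_0]$ of $K$ with $\mu_{CZ}(\ybar)=k$ and action close to $0$. For each capping $L$-orbit $\xbar$ of $H$ of index $k$ with action in $(-c,b]$, the coefficient of $\xbar$ in $\Psi_1 y$ counts $\Mcc(\ybar,\xbar;\{K_s\},\{J_s\})$ while the coefficient in $\Phi_1 y$ counts $\Ncc(W^u(y,\nabla f),\xbar;H,J)$, so I aim to cobord these two zero-dimensional sets.

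The cobordism comes from a one-parameter family $\{K_s^R\}_{R\geq 0}$ of compact homotopies from $K$ to $H$ that stretches the neck separating the $K$-region from the $H$-region. Concretely, for $R\geq 0$ I set $K_s^R=K$ on $(-\infty,-R]$, interpolate linearly from $K$ to $0$ on $[-R,-R+1]$, keep $K_s^R=0$ on $[-R+1,-1]$, and set $K_s^R=\beta(s)H$ on $[-1,+\infty)$; at $R=0$, a smooth reparametrization recovers the original homotopy $K_s=\beta(s)H+(1-\beta(s))K$. For generic $\{J_s^R\}$ the parametrized moduli space
\[
\Mcc_{\mathrm{par}}(\ybar,\xbar)\;=\;\bigcup_{R\geq 0}\{R\}\times \Mcc(\ybar,\xbar;\{K_s^R\},\{J_s^R\})
\]
is a one-dimensional manifold whose boundary at $R=0$ is precisely $\Mcc(\ybar,\xbar;\{K_s\},\{J_s\})$, giving the $\Psi_1$-contribution. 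The estimates of Lemma~\ref{lemma:A3} yield a uniform energy bound $E(u)\leq b+\|H\|_-<\hbar_L$, and the rationality hypothesis together with the absence of Maslov-zero disks of positive area precludes bubbling.

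For the other boundary, I perform an adiabatic analysis as $R\to+\infty$. On the $K$-region $(-\infty,-R]$, the maximum principle argument from subsection~\ref{Subsection:ProofofTheoremRational} confines $u$ to the Weinstein neighborhood of $L$ and forces it to be a $t$-independent anti-gradient trajectory of $f$ starting at $y$; on the unperturbed slab $[-R+1,-1]$, $u$ is a $J$-holomorphic strip with boundary in $L$ of arbitrarily small energy, hence converges to a constant $x_0\in L$; on the $H$-region $[-1,+\infty)$, the restriction is a PSS half-cylinder from $x_0$ to $\xbar$. As $R\to+\infty$, the anti-gradient trajectory extends to a full half-line so that $x_0\in W^u(y,\nabla f)$, and the limiting pair $(x_0,d)$ is an element of $\Ncc(W^u(y,\nabla f),\xbar;H,J)$. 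A standard gluing argument shows that every such PSS pair arises from a unique family of continuation strips, identifying the $R=+\infty$ boundary with the $\Phi_1$-count.

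The main obstacle is this adiabatic decomposition together with its converse gluing statement: one has to check that no energy escapes into the transition regions of width $1$, that the splitting into a gradient half-line and a PSS half-cylinder is clean, and that one can glue back any PSS pair to a unique long-neck continuation strip. The analytic tools are already present in the paper --- the maximum principle for $K$ used in subsection~\ref{Subsection:ProofofTheoremRational}, the energy identities of Lemma~\ref{lemma:A3}, and the standard Floer-theoretic compactness and gluing of \cite{McDSal} --- but they must be combined with the hypothesis on Maslov-zero disks, which is precisely what prevents quantum corrections from obstructing the bijection. Counting $\partial \Mcc_{\mathrm{par}}(\ybar,\xbar)$ modulo $2$ then yields $\Psi_1=\Phi_1$ on the chain level, and hence on homology.
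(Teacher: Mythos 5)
Your overall strategy is the right one, and it is essentially the paper's: introduce a neck-stretching parameter $R\geq 0$ interpolating a one-parameter family of compact homotopies from $K$ to $H$, identify the $R=0$ slice of the parametrized moduli space with the count defining $\Psi_1$ and the $R\to\infty$ slice (after adiabatic splitting) with the count defining $\Phi_1$, and then argue by cobordism. The specific family you write down (a literally flat middle slab where the Hamiltonian is zero) is a harmless variant of the paper's $H^R_{s,t}=\beta(-s-R)K+\beta(s-R)H_t$.

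The genuine gap is in your final step. You claim that $\partial\Mcc_{\mathrm{par}}(\ybar,\xbar)$ consists only of the $R=0$ and $R=+\infty$ slices, and you therefore conclude $\Psi_1=\Phi_1$ at the chain level. That is too strong: the compactified one-dimensional parametrized moduli space has additional boundary components coming from Floer breaking at \emph{interior} values of $R$. Concretely, for bounded $R_n\to R$, a sequence $(R_n,u_n)$ can degenerate either into a pair consisting of a continuation strip at parameter $R$ followed by a Floer trajectory of $H$ (an element of $\Mcc(x,\zbar;\HHH,\JJJ)\times\Mchat(\zbar,\ybar;H,J)$), or into a gradient trajectory of $f$ for $K$ followed by a continuation strip (an element of $\Mchat(x,z;f,g)\times\Mcc(z,\ybar;\HHH,\JJJ)$). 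You must verify, using the same action-window estimates you invoke for the ends, that the intermediate capping orbits still have actions in $(-c,b]$; that is done in the paper via $\Acc_K(\xbar')\geq \Acc_H(\ybar')-C>\epsilon-2c$ together with the rationality gap. Once these broken configurations are included, counting $\partial\Mcc_{\mathrm{par}}$ mod $2$ yields only the chain-homotopy identity $\Phi_1-\Psi_1=\partial\Gamma+\Gamma\partial$, where $\Gamma$ counts the parametrized moduli spaces of the appropriate index. This is still exactly what the theorem needs, since it gives $\Phi_1=\Psi_1$ in homology, but your argument as written asserts strict chain-level equality, which does not follow and is not what the cobordism produces.
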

\begin{proof} We only explain how to get an homotopy map between the chain maps $\Psi_1$ and $\Phi_1$. The arguments may be easily adapted to the pair $(\Psi_2,\Phi_2)$.

Let us consider the parametrized family $\{H_{s,t}^R\}$ of linear compact homotopies from $K$ to $H$ defined as follows: \[H_{s,t}^R=\beta(-s-R)K+\beta(s-R)H_t\, .\] Note $X_{s,t}^R$ its Hamiltonian vector field, and introduce the space:
\[\Mcc\left(x,\ybar; \{H_{s,t}^R\},\{J_{s,t}^R\}\right)=\left\{ (R,u),\, \mbox{ st } \begin{matrix} R\in \RRR_+,\\ \partial_su+J_{s,t}^R\left[\partial_tu-X_{s,t}^R\right]=0\\ \lim_{s\rightarrow -\infty}u(s,t)=x\\ \lim_{s\rightarrow +\infty}u(s,t)=y(t)\\ [y,u]=\ybar \end{matrix}\right\}\, .\]
The topology is given by the topology of the extended real half-line times the topology of $C^{2}$-convergence on compact sets. Here, the compact homotopy  $\{J_{s,t}^R\}$ is obtained by perturbing $\{J_{s+R,t}\}$. The perturbation is chosen for the space $\Ncc\left(x,\ybar;\HHH,\JJJ\right)$ to be a manifold of the expected dimension $\mu_{CZ}(\xbar)-\mu_{CZ}(\ybar)+1$.

For $(R,u)\in \Ncc\left(x,\ybar;\{H_{s,t}^R\},\{J_{s,t}^R\}\right)$, we have:
\[E(u)\leq f(x)-\Acc_H(\ybar)+\|f\|_-+\|H\|_+\, .\]
Consequently, no bubbling off of holomorphic disks may occur in the limit set. By classical arguments, for generic data, it follows that:
\begin{itemize}
\item When $\mu_{CZ}(\ybar)=\mu_M(x)+1$, the zero-dimensional manifold $\Mcc(x,\ybar;\HHH,\JJJ)$ is compact then finite.
\item When $\mu_{CZ}(\ybar)=\mu_{CZ}(x)$, the one-dimensional manifold $\Mcc(x,\ybar;\HHH,\JJJ)$ can be compactified into a cobordism between the sets:
\begin{align}
 & \Mcc(x,\ybar;\{H_s^0\},\{J_s\}) &  & ; \label{set3}\\
 & \Mcc(x,\zbar;\HHH,\JJJ)\times \Mcc(\zbar,\ybar;H,J) &  \mbox{for } & \zbar\in \Pcc^{(a,b]}(H,\omega)\, ;\label{set1}\\
 & \Mcc(x,z;f,g)\times\Mcc(z,\ybar;\HHH,\JJJ) & \mbox{for } &  z\in \crit(f), R>0\, ;\label{set2}\\
\mbox{and } & \Ncc\left(W^u(x,\nabla f), \xbar; H,J\right)\, .\label{set4} \end{align}
\end{itemize}

Let us justify the second assertion. Given a sequence $(R_n,u_n)$ in $\Mcc(x,\ybar,\{H_{s,t}^R\},\{ J_{s,t}^R\})$, we may assume that the sequence $(u_n)$ converges uniformly on compact subsets. Different possibilities are to be considered:\espace

\noindent \textit{Case $1$.} If $R_n\rightarrow 0$, the limit $v$ of $(u_n)$ is a Floer continuation map from $x$ to $\ybar$.\\

\noindent \textit{Case $2$.} Assume $R_n\rightarrow R$. Consider the limits of $s_n\cdot u_n$.
\begin{itemize} \item If $s_n\rightarrow -\infty$, we get a broken Floer continuation strip for the constant homotopy $K$ from $y$ to capping $L$-orbit $\xbar '$ of $K$ ; \item If $(s_n)$ is bounded, we get a Floer continuation strip $u$ from $\xbar '$ to a capping $L$-orbit $\ybar '$ of $H$, unique up to translation on the $s$-variable ; \item If $s_n\rightarrow \infty$, we get a broken Floer continuation strip for the constant homotopy $H$ from $\xbar'$ to $\xbar$, unique up to translation.
\end{itemize}
The estimates (\ref{Eqn:EstimateEnergies}) give successively $\Acc_K(\xbar')\leq \Acc_K(x)=f(x)\leq b$ ; $\Acc_H(\ybar ')\geq \Acc_H(\ybar)\geq -c$ and $\Acc_K(\xbar')-\Acc_H(\ybar ')+C\geq 0$ for $C=\|f\|_-+\|H\|_+$. Considering those inequalities, we obtain $\Acc_K(\xbar ')\geq \Acc_H(\ybar ')-C\geq -c-\|H\|_+-\epsilon>\epsilon-2c$. As there is no capping $L$-orbit with action in $(\epsilon-2c,-\epsilon)$, we get $\Acc_K(\xbar ')\geq -c$. In other hand, $\Acc_H(\ybar ')\leq \Acc_K(\xbar ')+C\leq \epsilon+C\leq b$.

For each involving capping $L$-orbits appearing in the limit set, its action belongs to $(-c,b]$. Considerations on the indices show that
\begin{itemize}
\item Either $\xbar=\xbar '$. In this case, we get a point in the sets (\ref{set1}) ;
 \item Either $\ybar=\ybar '$. In this case, we obtain a point in the sets (\ref{set2}).\espace
\end{itemize}

\noindent\textit{Case $3$}. Assume $R_n\rightarrow \infty$. The limit of $(u_n)$ gives a point in the set $\Ncc(W^u(x,\nabla f), \xbar;H,J)$. To conclude, standard gluing arguments are needed to prove that each point in the sets (\ref{set3}) (\ref{set1}), (\ref{set2}) and (\ref{set4}) may be obtained as a limit point.\espace

\noindent The first assertion shows that the following map is well-defined:
\[\Phi: \begin{matrix} CM_k(f,g) & \longrightarrow & CF_k(L,\omega;H)\\ x & \longmapsto & \sum \sharp_2\Mcc\left(x,\ybar; \HHH ,\JJJ\} \right)\, y\, . \end{matrix}\, .\] The second assertion can be algebraically translated by the equality: \[\Phi_1-\Psi_1 = \partial \Gamma+\Gamma\partial\, .\] We have done.
\end{proof}

\newpage
\section{Concluding remarks.}\label{Section:ConcludingRemarks}
\subsubsection{\!\!\!\!} Here are a few remarks on mistakes to avoid.\espace

{\bf 1.} Repeatedly in this paper, the proofs use one-dimensional cobordisms between two finite sets $A$ and $B$. Elements of $A$ and $B$ are geometric objects as strips, and they have energies. Estimates on the energies of the elements of $A$ do not imply any information on objects of $B$. Here are two reasons:

-- In general, the energy is non-constant along the cobordism;

-- The cobordism is simply given by a partition of $A\cup B$ into pairs, but each element of $B$ is not necessarly associated to an element of $A$. For instance, $A$ may be empty and $B$ is the boundary of a one-dimensional compact manifold.\espace

{\bf 2.} The localization of Lagrangian Floer homologies described in this paper is \textit{not} isomorphic to the Morse homology. The different maps described (continuation maps and PSS maps) are \textit{not} isomorphisms in general.\espace

{\bf 3.} The PSS maps could lead the reader to some confusions. Among capping orbits $\xbar$, some are homologically important, those for which $\Ncc_l(\xbar;H,J)$ and $\Ncc_r(\xbar;H,J)$ are non empty. Call them local. First, this definition \textit{explicitly} depends on the perturbation~$J$ of~$J_0$. A compact homotopy~$\{J_s\}$ defines a cobordism between $\Ncc_l(\xbar;H,J_-)$ and $\Ncc_l(\xbar;H,J_+)$, but $\Ncc_l(\xbar;H,J_-)\neq \emptyset$ does not imply $\Ncc_l(\xbar;H,J_+)\neq \emptyset$. For similar reasons, the property "local" depends on the capping half disk.

For instance, when $\mu_{CZ}(\xbar;H,J)=3$, the space $\Ncc_l(\xbar;H,J)$ is a three-dimensional manifold, and thus bounds a four-dimensional manifold. As $\Ncc_l(\xbar;H,J)$ is only defined up to cobordism, we get no information on $\xbar$.\espace

{\bf 4.} Nevertheless, for a fixed time-depending almost complex structure $J$, those "local" capping orbits generate some vector subspace $E$ of $CF_*(L,H)$. Counting the Floer continuation strips with index $0$ defines an operator $u$ on $E$. But, $u^2\neq 0$. The
reason is the following.

Consider a pair $(u,v)$ in the set $\Mchat(\xbar,\ybar;H,J)\times \Mchat(\ybar,\zbar;H,J)$, where $\xbar$ and $\zbar$ are "local". Is $\ybar$ "local" ? There is no way to know it. There is a cobordism between $\Ncc_r(\xbar;H,J)\times \Mchat(\xbar,\ybar;H,J)$ and $\Ncc_r(\ybar;H,J)$ ; but the second space can be empty. Once again, we cannot conclude. \espace
\subsubsection{Local Hamiltonian Floer homology.}
By similar arguments, a local version of the Hamiltonian Floer homology can be defined. At first sight, this local version seems less usefull as all the problems due to the presence of holomorphic spheres with negative indices can be avoided. The Hamiltonian Floer homology is well-defined for general compact symplectic manifolds.

For a compact symplectic manifold $(M,\omega)$, the Hamiltonian Floer homology can be viewed as the Lagrangian Floer homology of the diagonal $\Delta_M$. (Here, $\Delta_M$ is the Lagrangian submanifold of $\Mbar\times M$ collecting the pairs $(x,x)$.)

--- A disk $(u,v)$ bounded by $\Delta_M$ gives rise to a sphere $w:\CCC \PPP^1\rightarrow M$ obtained by gluing $v$ and $z\mapsto u(z/|z|^2)$. The Maslov index of $(u,v)$ is exactly the Chern number of $w$. If $(u,v)$ is a $(-J)\oplus J$-holomorphic disk, then $w$ is $J$-holomorphic sphere. Note that all the required transversality conditions can be satisfied by almost complex structures $(-J)\oplus J$. The minimal area of a holomorphic disk of $\Mbar\times M$ bounded by $\Delta_M$ is $\hbar$.

--- Take a Hamiltonian $H$ on $M$. A contractible $\Delta_M$-orbit $(y,x)$ of $\pi_2^*H$ corresponds to a contractible one-periodic orbit $x$ of $H$ with $x(0)=y$. It is readily seen to be non-degenerate  iff $1$ is not an eigenvaue of $d\varphi^H_1(x_0)$. A capping half-disk $(u,v)$ gives a disk $w$ bounded by a reparametrization of $x$ obtained as the gluing of the maps $v:\DDD_+\rightarrow M$ and $u':\DDD_-\rightarrow M$ ($u'(z)=u(\overline{z})$). As easily checked, a local maximum of a small Morse function has index $n$.

--- A similar discussion is needed to understand how to deal with the Floer continuation strips.
\begin{prop}
Let $(M,\omega)$ be a compact symplectic manifold. Let $b-a<\hbar$.
Then, there exists an almost complex structure $J_0$ such that
$b-a<\hbar_L(J_0)$. And
\[HF_*^{(a,b]}(M,H,\omega,J_0)=HF_*^{(a,b]}(\Delta,\pi_2^*H,\omega,J_0)\]
is well-defined. If $-\hbar/2<a<-\|H\|_-<\|H\|_+\leq b<\hbar/2$, then the
composition of the natural sequences
\[HM_*(M)\rightarrow HF_*^{(a,b]}(M,H,\omega,J_0)\rightarrow HM_*(M)\]
is the identity.
\end{prop}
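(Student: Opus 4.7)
The plan is to apply the relative PSS construction of Section~\ref{Section:Kerman'sProof} to the Lagrangian $\Delta_M \subset \Mbar\times M$ with Hamiltonian $\pi_2^*H$, and invoke the three-bullet dictionary stated just before the proposition to transfer the conclusion back. First I would set up that dictionary cleanly at the chain level: contractible $\Delta_M$-orbits of $\pi_2^*H$ correspond bijectively to contractible one-periodic orbits of $H$ via $(x(0),x(t)) \leftrightarrow x(t)$; the two notions of non-degeneracy agree ($1$ not an eigenvalue of $d\varphi_1^H(x(0))$); and capping half-disks on $\Delta_M$ correspond to capping disks on $x$ via the folding $(u,v) \leftrightarrow w$ of the third bullet, with matching symplectic areas and (up to a standard shift) matching Conley-Zehnder and Maslov indices. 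This identifies the two chain complexes, so the first assertion of the proposition becomes a definition.

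Next I would choose the reference almost complex structure. Pick an $\omega$-tame $J$ on $M$ whose minimal sphere area is close to $\hbar$, and set $J_0 := (-J)\oplus J$ on $\Mbar\times M$. By the disk/sphere correspondence of the first bullet, any non-constant $J_0$-holomorphic disk bounded by $\Delta_M$ yields a $J$-holomorphic sphere in $M$ of the same area; hence $\hbar_{\Delta_M}(J_0) \geq \hbar > b-a$, and the hypothesis of subsection~\ref{Subsection:FilteredLagrangianFloerHomology} is satisfied, so the filtered groups are well-defined.

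Setting $L := \Delta_M$ and $c := -a$, the hypothesis $-\hbar/2 < a < -\|H\|_- < \|H\|_+ \leq b < \hbar/2$ unpacks to $\|H\|_+ \leq b$, $\|H\|_- < c$, and $b + c = b - a < \hbar_{\Delta_M}(J_0)$, which are precisely the hypotheses needed in subsection~\ref{Subsection:DefinitionRelativePSS} to define the relative PSS maps. Pick a Morse-Smale pair $(f,g)$ on $\Delta_M \cong M$, identifying $HM_*(f,g) \cong HM_*(M)$, and read off
\[\Phi_1 : HM_*(M) \to HF_*^{(a,b]}(M,H,\omega,J_0), \qquad \Phi_2 : HF_*^{(a,b]}(M,H,\omega,J_0) \to HM_*(M).\]
The factorisation theorem of Section~\ref{Section:Kerman'sProof} then gives $\Phi_2 \circ \Phi_1 = \id$, which is the claim.

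The main obstacle is transversality inside the class of split almost complex structures. Section~\ref{Section:Kerman'sProof} achieves transversality by perturbing $J_{s,t}$ freely in $\Icc^3_A(J_0)$, whereas the estimate $\hbar_{\Delta_M}(J_0) \geq \hbar$ depends crucially on $J_0$ being split. I would handle this by keeping the asymptotic reference $J_0$ split while letting the $\BBB$-parametrised family $\{J_{s,t}\}$ drift off the split locus over a large compact set: the Floer strip energy bound $E \leq b-a < \hbar_{\Delta_M}(J_0)$ still rules out disk bubbling by the argument of subsection~\ref{Subsection:FilteredLagrangianFloerHomology}, and the generic transversality arguments of subsubsection~\ref{Subsection:ProofOfLemma} then apply verbatim to the perturbed non-split family.
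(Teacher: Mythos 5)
Your proposal follows exactly the route the paper indicates: translate via the diagonal dictionary (orbits, non-degeneracy, cappings, indices), choose a split reference $J_0=(-J)\oplus J$ on $\Mbar\times M$, invoke the PSS factorization of Section~\ref{Section:Kerman'sProof} with $L=\Delta_M$ and $c=-a$, and handle transversality by perturbing the $\BBB$-parametrised family off the split locus inside a neighbourhood where $\hbar_{\Delta_M}(\cdot)$ stays above $b-a$, exactly as in subsection~\ref{Subsection:FilteredLagrangianFloerHomology}. One small slip to fix: the disk/sphere correspondence gives $\hbar_{\Delta_M}\bigl((-J)\oplus J\bigr)$ \emph{equal} to the minimal $J$-sphere area, hence $\leq\hbar$ rather than $\geq\hbar$; since $b-a<\hbar=\sup_J\hbar(J)$ you can still pick $J$ with $b-a<\hbar_{\Delta_M}\bigl((-J)\oplus J\bigr)$, so your conclusion is unaffected.
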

The filtered version of the Lagrangian Floer homology is still available for certain non-compact Lagrangian submanifolds. For example, the diagonal of a geometrically bounded symplectic manifold is not compact in general, but a filtered Hamiltonian Floer homology can be defined for an action window $(0,a)$ and $a<\hbar$.

\newpage
\section{Appendices.}\label{Section:Appendices}
\subsection{Estimates on the energies.}\label{Subsection:EstimatesEnergies}
\subsubsection{\!\!\!\!}\label{Subsubsection:EstimatesEnergies0} Proving Theorem \ref{thm:Chekhanov} requires estimates on the energies of holomorphic curves and their Hamiltonian perturbations. Those estimates are now standard tools in symplectic topology. Most notations are introduced in section \ref{Section:Gromov'sProof}. In particular, conditions C1 and C2 are met for a $\omega$-compatible almost complex structure $J_0$.
\begin{lemma}\label{lemma:5.1}
Let $\Sigma$ be a Riemann surface (possibly with boundary). For a $\Sigma$-parametrized family $\JJJ=\{J_z\}$ of almost complex structures in $\Icc_A(J_0)$, a $\JJJ$-holomorphic curve $u:\Sigma\rightarrow M$ satisfies: \[E(u)\geq \frac{1}{A}\area(u)\, .\]
\end{lemma}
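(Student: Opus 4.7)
The plan is to exploit the fact that on a $J$-holomorphic curve the symplectic $2$-form $u^*\omega$ coincides with the Riemannian area form of a canonical $J$-invariant metric, and then to compare that metric to the reference metric $g$ using the tameness constant $A$.

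First, for each $z\in\Sigma$ introduce the symmetrization
\[ \sigma_{J_z}(X,Y) \;=\; \tfrac{1}{2}\bigl(\omega(X,J_zY)+\omega(Y,J_zX)\bigr) \]
on $T_{u(z)}M$. Since $J_z$ is $\omega$-tame, $\sigma_{J_z}(X,X)=\omega(X,J_zX)>0$, so $\sigma_{J_z}$ is a Riemannian inner product, and it is $J_z$-invariant by construction. The first key computation is pointwise: in a local conformal coordinate $z=s+it$ on $\Sigma$, setting $\xi=du(\partial_s)$, the Cauchy-Riemann equation~(\ref{eqn:CR}) gives $du(\partial_t)=J_z\xi$, so
\[ \sigma_{J_z}(\xi,\xi)=\sigma_{J_z}(J_z\xi,J_z\xi)=\omega(\xi,J_z\xi),\qquad \sigma_{J_z}(\xi,J_z\xi)=0. \]
Hence $\sqrt{\det u^*\sigma_{J_z}}=\omega(\xi,J_z\xi)=u^*\omega(\partial_s,\partial_t)$, which says $u^*\omega$ equals the area form of the pulled-back metric $u^*\sigma_{J_z}$.

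Second, use the defining inequality of $\Icc_A(J_0)$, namely $A\omega(X,J_zX)\geq \|X\|^2$ with $\|\cdot\|$ coming from $g$. This reads $\sigma_{J_z}\geq \tfrac{1}{A}g$ as symmetric bilinear forms on each tangent space. On a two-dimensional surface this pointwise comparison of quadratic forms yields the comparison of area densities $\sqrt{\det u^*\sigma_{J_z}}\geq \tfrac{1}{A}\sqrt{\det u^*g}$, by simultaneously diagonalizing the two forms and taking the product of eigenvalues (the exponent being $n/2=1$ here).

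Integrating over $\Sigma$ these two ingredients give
\[ E(u)=\int_\Sigma u^*\omega=\int_\Sigma \sqrt{\det u^*\sigma_{J_z}}\;\geq\; \frac{1}{A}\int_\Sigma \sqrt{\det u^*g}=\frac{1}{A}\area(u), \]
which is the claimed estimate. There is no real obstacle: the only point to be careful about is the distinction between taming and compatibility (which forces the use of $\sigma_{J_z}$ rather than the possibly non-symmetric form $\omega(\cdot,J_z\cdot)$), and the dimension-two nature of $\Sigma$, which is what converts the pointwise bound on quadratic forms into a bound on area elements with the same constant $1/A$.
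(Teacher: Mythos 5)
Your proof is correct and is essentially the paper's argument in a slightly more conceptual dressing. The paper works directly: from the Cauchy--Riemann equation it writes $u^*\omega(\partial_s,\partial_t)=\omega(\partial_su,J\partial_su)^{1/2}\omega(\partial_tu,J\partial_tu)^{1/2}$, applies the defining inequality $A\,\omega(X,JX)\geq\|X\|^2$ to each factor, and finishes with Cauchy--Schwarz to recognize the $g$-area density; you instead package the same tameness inequality as $\sigma_{J_z}\geq\tfrac1A g$ for the symmetrized tame metric, observe via the Cauchy--Riemann equation that $u^*\omega$ is exactly the $\sigma_{J_z}$-area density, and compare determinants of quadratic forms on the two-dimensional image of $du$. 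The two computations are equivalent (squaring the paper's middle inequality and dropping the cross term recovers your determinant comparison), and both are valid; your version makes explicit the general principle that $J$-holomorphic maps are conformal with respect to the tame metric. One cosmetic slip: in the parenthetical about eigenvalues, the relevant dimension is $\dim\Sigma=2$, not the paper's $n$ (which is $\tfrac12\dim M$) -- you clearly meant the former, and the exponent $1$ after the square root is right.
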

\begin{proof} Let $z=s+it$ be a local chart on the Riemann surface $\Sigma$. Then, $u^*\omega= \omega( \partial_su,\partial_tu)ds\wedge dt$, where:
\begin{align*}
\omega(\partial_su,\partial_tu)& =\omega(\partial_su,J\partial_su)^{1/2}\omega(\partial_tu,J\partial_tu)^{1/2}\\
 & \geq \frac{1}{A}\|\partial_su\|\cdot\|\partial_tu\| \\
 & \geq  \frac{1}{A} \sqrt{\|\partial_su\|^2\cdot  \|\partial_tu\|^2-<\partial_su|\partial_tu>^2}\, .
\end{align*}
\end{proof}
\begin{lemma}[Viterbo (\cite{Viterbo3}, appendix)]\label{lemma:5.2}
Let $v:\Sigma\rightarrow M$ be a connected minimal surface passing through $x\in M$, and such that $v(\partial\Sigma)\cap B(x,r)=\emptyset$ with $r\leq 1/C$. Then, \[\area\left[v(\Sigma)\right]\geq \pi r^2\exp\left[\varphi\left(Cr\right)\right]\, ,\] where the function $\varphi$ is defined below.
\end{lemma}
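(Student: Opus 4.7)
The plan is to prove the classical area monotonicity formula for minimal surfaces inside small geodesic balls, then integrate the resulting differential inequality. Introduce
\[
\alpha(\rho) = \area\bigl(v(\Sigma)\cap B(x,\rho)\bigr), \qquad \ell(\rho) = \mathrm{length}\bigl(v(\Sigma)\cap \partial B(x,\rho)\bigr),
\]
for $\rho\in(0,r]$; both are well-defined since $v(\partial\Sigma)\cap B(x,r)=\emptyset$ and $r\leq 1/C$ lies well below the injectivity radius $3/C$, so the distance function $\rho(\cdot)=d(\cdot,x)$ is smooth on $B(x,r)$. The target is a differential inequality of the form $\rho\,\alpha'(\rho)\geq 2g_C(\rho)\,\alpha(\rho)$ whose solution has the exponential lower bound announced in the lemma.

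First, the coarea formula applied to $\rho|_{v(\Sigma)}$ gives $\alpha'(\rho)\geq \ell(\rho)$ for almost every $\rho\in(0,r)$ (with equality when the level sets are transverse, which is the generic case by Sard). Second, consider the radial vector field $X=\rho\,\nabla\rho$ on $B(x,r)$. Since $v$ is minimal (mean curvature vanishes), integrating $\mathrm{div}_\Sigma X=\Delta_\Sigma(\rho^2/2)$ over $v(\Sigma)\cap B(x,\rho)$ and applying Stokes yields
\[
\int_{v(\Sigma)\cap B(x,\rho)}\mathrm{div}_\Sigma X\,d\area \;=\; \int_{v(\Sigma)\cap \partial B(x,\rho)}\langle X,\nu\rangle\,d\ell \;\leq\; \rho\,\ell(\rho),
\]
since $|X|=\rho$ on $\partial B(x,\rho)$.

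Third, use the Hessian comparison theorem. The upper curvature bound $\sec\leq C$ forces $\mathrm{Hess}(\rho^2/2)\geq g_C(\rho)\cdot\mathrm{Id}$ in $B(x,r)$, with $g_C(\rho)=1+O((C\rho)^2)$ near $\rho=0$ (concretely, $g_C(\rho)$ involves $\sqrt{C}\rho\cot(\sqrt{C}\rho)$ when $C>0$; the bound $r\leq 1/C$ keeps us in the regime where this model function stays positive). Tracing this inequality over the tangent plane to $v(\Sigma)$ gives $\mathrm{div}_\Sigma X\geq 2 g_C(\rho)$. Combined with the previous step,
\[
2 g_C(\rho)\,\alpha(\rho) \;\leq\; \rho\,\alpha'(\rho), \qquad \text{hence}\qquad \frac{d}{d\rho}\log\!\Bigl(\frac{\alpha(\rho)}{\rho^2}\Bigr)\;\geq\;\frac{2\bigl(g_C(\rho)-1\bigr)}{\rho}.
\]

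Finally, integrate from $0$ to $r$. Because $v$ is smooth and passes through $x$, the local density satisfies $\alpha(\rho)/(\pi\rho^2)\to m\geq 1$ as $\rho\to 0$ (the tangent cone at $x$ is a union of planes with total multiplicity at least one), so
\[
\alpha(r)\;\geq\;\pi r^2\exp\!\bigl(\varphi(Cr)\bigr), \qquad \varphi(Cr):=\int_0^r \frac{2\bigl(g_C(\rho)-1\bigr)}{\rho}\,d\rho,
\]
where the integral is finite at the origin because $g_C(\rho)-1=O((C\rho)^2)$ and depends only on $Cr$ after the natural substitution. The main obstacle will be the Hessian comparison step: carrying out the Rauch-type argument to obtain the explicit function $g_C$ (and hence $\varphi$) valid throughout $B(x,r)$, and verifying the behavior at $\rho=0$ so that both the density limit and the integrability of $(g_C-1)/\rho$ hold; everything downstream (the diameter estimate of Proposition~\ref{prop:BoundsEnergy}) depends on the precise form of $\varphi$ produced here.
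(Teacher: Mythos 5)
Your proof is correct, but it takes a genuinely different route from the paper's. The paper compares the minimal surface directly to the geodesic cone over its trace on $\partial B(x,s)$: minimality forces $a(s)\leq\area\left[C(s)\right]$, and the cone area is bounded via Jacobi-field distortion estimates for $\exp_x$, giving $a(s)\leq\frac{s\,a'(s)}{2}\frac{\sinh(Cs)}{\sin(Cs)}$ in one stroke. You instead run the classical monotonicity argument: apply the divergence theorem to $X=\rho\nabla\rho$ on $v(\Sigma)\cap B(x,\rho)$ (using minimality to identify $\mathrm{div}_\Sigma X$ with the tangential trace of the ambient Hessian of $\rho^2/2$), bound the boundary term by $\rho\,\ell(\rho)$, use coarea for $\alpha'(\rho)\geq\ell(\rho)$, and invoke Hessian comparison for the lower bound $\mathrm{div}_\Sigma X\geq 2g_C(\rho)$. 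Both routes produce a logarithmic differential inequality for $\alpha(\rho)/\rho^2$ which, together with the density limit $\alpha(\rho)/\pi\rho^2\to m\geq 1$, integrates to a bound of the announced shape. The concrete difference is in the function $\varphi$: the cone estimate yields the factor $\sin(Cs)/\sinh(Cs)$, while Hessian comparison yields $\sqrt{C}\rho\cot(\sqrt{C}\rho)$ (or $C\rho\cot(C\rho)$, depending on whether one reads the curvature bound in C1 as $C$ or $C^{2}$); these agree at leading order and both integrate to a finite $\varphi$ vanishing at the origin. Your flag that the precise $\varphi$ depends on how the comparison is carried out is exactly right — the paper simply commits to one choice. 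One small bonus of your route: it only uses the upper sectional curvature bound, whereas the paper's $\sinh(Cs')/\sin(Cs)$ implicitly uses a two-sided bound; one small cost: you must verify the divergence-theorem bookkeeping for the immersed, possibly non-embedded, image, which the cone comparison sidesteps.
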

\begin{proof} For the sake of completeness, we recall the proof from the appendix of \cite{Viterbo3}. For $s\leq r$, set $a(s)=\area\left[v(\Sigma)\cap B(x,s)\right]$. As $v$ is a minimal surface, $a(s)$ must be less than or equal to the area of the cone $C(s)$ spanned by the (possibly singular) curve $C\cap \partial B(x,s)$ (see figure \ref{Figure:Cone}). Let us estimate the area of $C(s)$ for a value of $s$ such that the curve $\partial C(s)$ is not singular. \begin{align*}
\area\left[C(s)\right] & =\int_0^s L\left[\exp_{x}\left(\frac{s'}{s}\exp^{-1}_{x}\left(\partial C(s)\right)\right)\right]ds'\\  & \leq \int_0^s  \frac{s'}{s}\frac{\sinh(Cs)}{\sin(Cs)}L \left[ \partial  C(s)\right]ds'\\  & = \frac{s}{2}\frac{\sinh(Cs)}{\sin(Cs)}L \left[ \partial  C(s)\right]\, ,
\end{align*}
where the inequality directly follows from the comparison theorems in Riemannian geometry. Hence, using the minimality of $v$, we have: \begin{align*} a(s)\leq \area\left[C(s)\right] & \leq \frac{s a'(s)}{2} \frac{\sinh(Cs)}{\sin(Cs)} & \mbox{ or equivalently, } \frac{a'(s)}{a(s)} & \geq  \frac{2}{s}\frac{\sin(Cs) }{\sinh(Cs)}\, .\end{align*} Set \[\varphi(s)=\int_0^s \frac{2}{s'}\left[\frac{\sin(s')}{\sinh(s')}-1\right]ds'\, .\] Recall that $a(\epsilon)\geq \pi\epsilon^2$. By integrating the above inequality, we get: \[\log\left(\frac{a(r)}{\pi r^2}\right)\geq {\left[\log\left(\frac{a(s')}{s'^2}\right)\right]}_0^r\geq \varphi\left(Cr\right)\, .\] \end{proof}
\begin{figure}
\centering
\includegraphics[height=50mm]{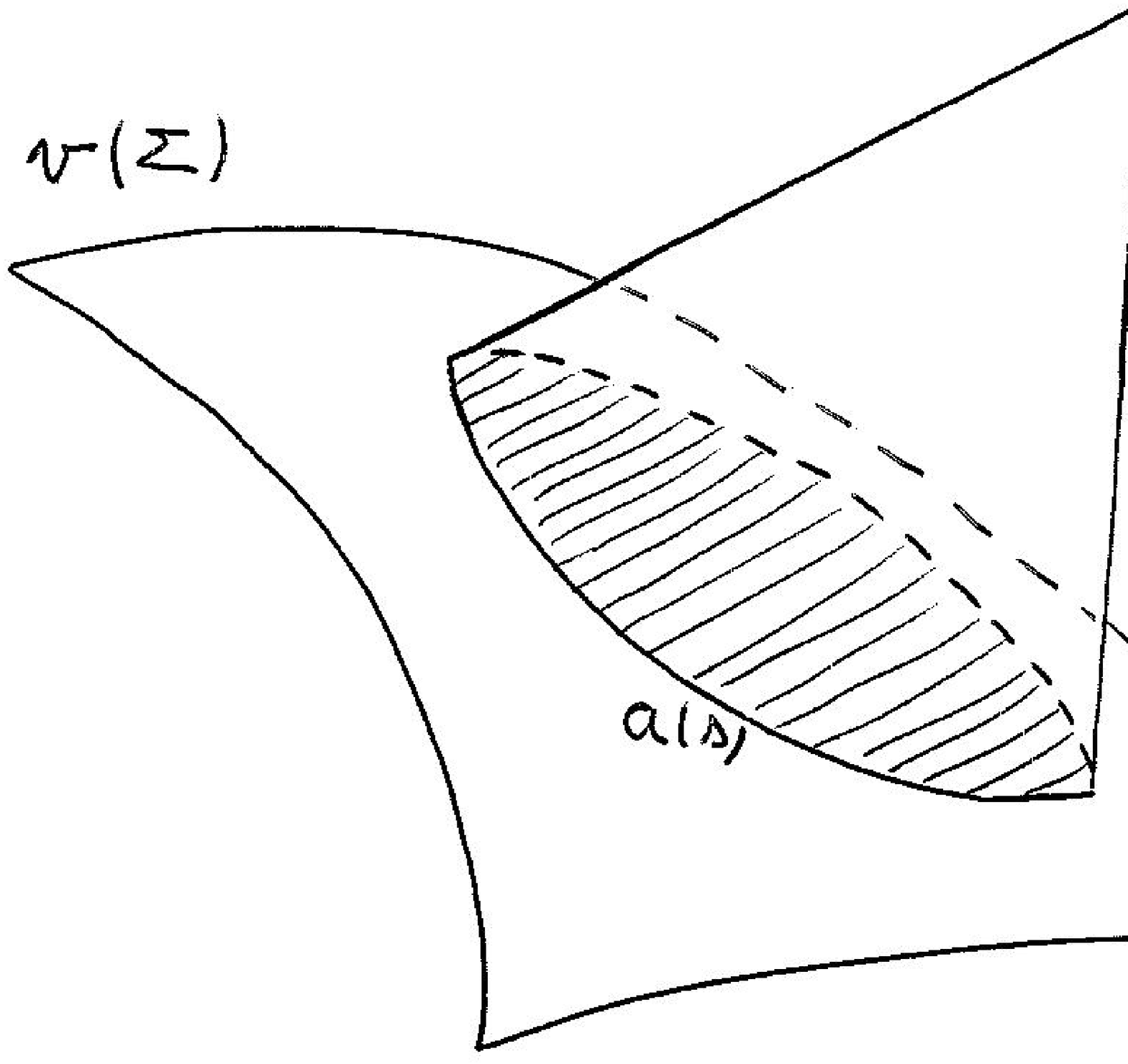}
\caption{Cone $C(s)$.}\label{Figure:Cone}
\end{figure}
\begin{proof}[Proof of proposition \ref{prop:BoundsEnergy}.] Assume $d=\diam\left[u(\Sigma)\right]> (2N)/C$. Then, there exist $N$ points $z_1,\dots , z_N$ such that the balls $B\left(u(z_i),1/C\right)$ are pairwise disjoint. Those points may be chosen so that $u(\partial\Sigma)$ lies outside the corresponding balls. By applying lemmas \ref{lemma:5.1} and \ref{lemma:5.2}, we easily get:
\[E(u)\geq \frac{1}{A}\area(u)\geq N\frac{\pi \exp\left(\varphi(1)\right)}{A C^2}\, .\] Say $N+1$ to be the upper integer part of $Cd/2$. Then, we have:
\[E(u)\geq \left(\left\lceil \frac{Cd}{2}\right\rceil-1\right)
\frac{\pi\exp\left(\varphi(1)\right)}{AC^2}\, ,\] which concludes the proof.
\end{proof}
\subsubsection{\!\!\!\!} Without proof, Recall:
\begin{lemma}[Hofer-Salamon \cite{HoferSalamon}]\label{Lemma:HoferSalamon} Set $\BBB_T=[-T,T]\times [0,1]$. For a $\BBB_T$-parametrized family of almost complex structures $\JJJ=\{J_{s,t}\}$ in $\Icc_A(J_0)$, there exist~$\delta>0$ and $\eta>0$ (depending on~$A$), such that the following holds. For each $\JJJ$-holomorphic curve $u:(\BBB_T,\partial\BBB_T) \rightarrow (M,L)$ with~$E(u)<\delta$, we have: \[\forall z,z'\in \BBB_S,\, d\left(u(z),u(z')\right)\leq \exp\left[\eta\left(S-T\right)\right]\sqrt{E(u)}\, .\]
\end{lemma}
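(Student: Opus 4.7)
This is the standard exponential decay estimate for low-energy $\JJJ$-holomorphic strips with Lagrangian boundary. My plan combines three ingredients: confining $u$ to a single Darboux--Weinstein chart via the monotonicity of Proposition~\ref{prop:BoundsEnergy}, deriving exponential decay of the sliced energy from a spectral gap on the cross-section $[0,1]$, and converting this $L^2$ decay to the required pointwise distance bound by an interior gradient estimate.

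\textbf{Step 1 (Confinement).} I would pick $\delta>0$ small enough that Proposition~\ref{prop:BoundsEnergy} forces $\diam [u(\BBB_T)]< r$, where $r>0$ is a Darboux--Weinstein radius of $L$. Using the boundary point $p:=u(0,0)\in L$ as a base point, $u(\BBB_T)$ is then contained in a single Weinstein chart $\Psi:U\subset T^*L\to M$ which identifies a neighborhood of $p$ with a neighborhood of $0\in\CCC^n$, sends $L$ to $\RRR^n$, and sends $J_0(p)$ to the standard complex structure $J_\mathrm{st}$.

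\textbf{Step 2 (Energy decay).} In this chart, $v:=\Psi^{-1}\circ u$ solves $\partial_sv+\widetilde J(s,t,v)\partial_tv=0$ with $v(s,0),v(s,1)\in\RRR^n$ and $\widetilde J(s,t,0)=J_\mathrm{st}$. Set
\[
\varphi(S):=\tfrac{1}{2}\int_{\BBB_S}|\partial_sv|^2\,dt\,ds,\qquad 0\le S\le T,
\]
so that $\varphi(T)=E(u)$. I would establish the convexity inequality $\varphi''(S)\ge (2\eta)^2\varphi(S)$ on $[0,T]$, for some $\eta=\eta(A)>0$. The core ingredient is a spectral gap on the slice: sections $\xi:[0,1]\to\CCC^n$ with $\xi(0),\xi(1)\in\RRR^n$ admit a sine Fourier expansion with modes $\sin(k\pi t)$, so $J_\mathrm{st}\partial_t$ has smallest positive eigenvalue $\pi$ and satisfies the Poincar\'e-type inequality
\[
\int_0^1 |\partial_t \xi|^2\,dt\;\ge\; \pi^2 \int_0^1 |\xi|^2\,dt.
\]
Combined with the $C^0$-closeness of $\widetilde J$ to $J_\mathrm{st}$ (uniform once $\delta$ is small, by Step~1), this converts, via integration in $t$ of the CR identity, into the claimed convexity inequality for $\varphi$. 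A standard ODE comparison then yields $\varphi(S)\le E(u)\,e^{2\eta(S-T)}$.

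\textbf{Step 3 (From $L^2$ to $C^0$).} The mean-value inequality for $\JJJ$-holomorphic maps with totally real boundary (Lemma~4.3.1 of~\cite{McDSal}, already used in subsubsection~\ref{Subsubsection:Limits}) gives, for $z=(s,t)\in\BBB_S$,
\[
|dv(z)|^2\;\le\;C\,\varphi(|s|+1)\;\le\;C\,e^{2\eta(|s|+1-T)}E(u).
\]
Integrating $|dv|$ along the straight segment joining $z$ to $z'\in\BBB_S$, and shrinking $\eta$ slightly to absorb both $C$ and the polynomial-in-$T$ length of the segment, produces the required estimate $d(u(z),u(z'))\le\exp[\eta(S-T)]\sqrt{E(u)}$ after composing with the bi-Lipschitz Weinstein chart.

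\textbf{Main obstacle.} Step~2 is the delicate one: the error between $\widetilde J(s,t,v)$ and $J_\mathrm{st}$ has to be small enough not to spoil the leading $\pi^2$ in the cross-sectional Poincar\'e inequality, which is what ultimately fixes $\delta$ and the constant $\eta<\pi$. A convenient technical device is Schwarz reflection of $v$ across the Lagrangian $\RRR^n$: this extends $v$ to a $\JJJ^\mathrm{refl}$-holomorphic map on $[-T,T]\times[-1,1]$ that is $2$-periodic in the $t$-variable, and the boundary analysis then reduces to a periodic one where the sine-Fourier decomposition and the resulting spectral gap are most transparent.
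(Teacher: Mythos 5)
The paper itself gives no proof of this lemma (it is explicitly recalled ``without proof'' from Hofer--Salamon), so your outline can only be measured against the standard argument — and there it has a genuine gap. The decisive problem is Step 2. The family $\{J_{s,t}\}$ is only assumed to lie in $\Icc_A(J_0)$, i.e.\ to satisfy the taming inequality $A\omega(X,J_{s,t}X)\geq\|X\|^2$ and to agree with $J_0$ outside a large compact set; on the region where $u$ lives it may be far from $J_0$. Hence, after composing with a Weinstein chart adapted to $J_0$, your assertion $\widetilde J(s,t,0)=J_{\mathrm{st}}$ is false in general, and the claimed ``uniform $C^0$-closeness of $\widetilde J$ to $J_{\mathrm{st}}$ once $\delta$ is small'' does not follow from anything: shrinking $\delta$ shrinks the image of $u$, but it does not move $J_{s,t}$ towards $J_0$, let alone towards an integrable model. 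Consequently the flat spectral gap $\pi$, the perturbation step ``not spoiling the leading $\pi^2$'', and the Schwarz reflection device (which needs the structure to be standard, or at least real-analytic and compatible with the anti-holomorphic involution fixing $\RRR^n$) all lack a foundation under the stated hypotheses. The actual Hofer--Salamon proof obtains the exponential-decay differential inequality with a rate depending only on $A$ and $C$ — which is precisely why the statement says $\delta,\eta$ depend on $A$ — either from an isoperimetric inequality for the local symplectic action of short chords with endpoints on $L$, or from the convexity computation in which the non-flat error terms are controlled pointwise because the mean value inequality makes $|du|$ small as soon as $E(u)<\delta$, not because $J$ is close to a standard structure. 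That is the missing idea.

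There are secondary problems as well. In Step 1, Proposition~\ref{prop:BoundsEnergy} gives $\diam[u(\BBB_T)]\leq \frac{2}{C}\bigl(1+\frac{AC^2}{c_0}E(u)\bigr)$, which tends to $2/C$, not to $0$, as $E(u)\to 0$; so choosing $\delta$ small does not confine the image below an arbitrary Weinstein radius $r$. To get confinement you must rerun the monotonicity argument at scale $r$, and since the boundary of the strip lies on $L$ you need a Lagrangian-boundary version of monotonicity, which Lemma~\ref{lemma:5.2} (whose hypothesis is that the boundary avoids the ball) does not provide. In Step 2, the slice inequality $\int_0^1|\partial_t\xi|^2\,dt\geq\pi^2\int_0^1|\xi|^2\,dt$ is false for constant $\xi\in\RRR^n$: the operator $-J_{\mathrm{st}}\partial_t$ with these boundary conditions has an $n$-dimensional kernel, and the inequality can only be applied to $\xi=\partial_s v$ after noting that the boundary condition on $v$ itself forces the slice average of the kernel component to vanish. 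Finally, in Step 3 the factor $C(S+1)$ cannot be absorbed into $\exp[\eta(S-T)]$ uniformly: at $S=T$ the right-hand side of the statement is exactly $\sqrt{E(u)}$, so ``shrinking $\eta$'' buys nothing there; one either proves the estimate for $S\leq T-1$ or keeps multiplicative constants, as in the original sources.
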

\subsubsection{\!\!\!\!} The following lemma compares the energy of Floer continuation strips and the difference of actions, as defined in subsection \ref{Subsection:FilteredLagrangianFloerHomology}. Here, just recall
\[\Acc_{H^-}(x_-)-\Acc_{H^+}(x_+)=\int_{\BBB} u^*\omega+\int_0^1H_t(x_-(t))dt-\int_0^1H_t(x_+(t))dt\, .\]
\begin{lemma}\label{lemma:A3}
Let $u:(\BBB,\partial\BBB)\rightarrow (M,L)$ be a $(\HHH,\JJJ)$-Floer continuation strip from $x_-$ to $x_+$. Then
\[\alpha_-(\HHH)\leq E(u)-\left[\Acc_{H_-}(x_-)-\Acc_{H_+}(x_+)\right]\leq \alpha_+(\HHH)\, ,\]
where \begin{align}
\alpha_+(\HHH) & =\int_{-\infty}^{\infty}\int_0^1\left(\sup\partial_s H_{s,t}\right) \, dtds\, ,\\
\mbox{and } \alpha_-(\HHH) & = \int_{-\infty}^{\infty} \int_0^1\left(\inf\partial_s H_{s,t}\right)\, dt ds\, .\label{Eqn:EstimateEnergies}
\end{align}
\end{lemma}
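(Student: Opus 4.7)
\medskip

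The plan is to reduce the lemma to a single direct computation using the Floer equation, and then bound one residual integral from above and below by $\alpha_\pm(\HHH)$.

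First I would use the Floer equation $\partial_s u = -J_{s,t}(u)[\partial_t u - X_{s,t}(u)]$ together with the $\omega$-tameness of $J_{s,t}$ to rewrite the integrand of the energy:
\[
|\partial_s u|^2 \;=\; \omega(\partial_s u,\,J_{s,t}\partial_s u)
\;=\; \omega\bigl(\partial_s u,\,\partial_t u - X_{s,t}(u)\bigr)
\;=\; \omega(\partial_s u,\partial_t u) \;-\; \omega(\partial_s u,X_{s,t}(u)).
\]
Using the defining relation $\iota(X_{s,t})\omega = -dH_{s,t}$, the second term equals $dH_{s,t}(\partial_s u)$. This rewrites the pointwise energy density as $u^*\omega/(ds\wedge dt)$ plus $dH_{s,t}(\partial_s u)$.

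Next I would split the latter term by the chain rule,
\[
dH_{s,t}(\partial_s u) \;=\; \frac{d}{ds}\bigl[H_{s,t}(u(s,t))\bigr] \;-\; (\partial_s H_{s,t})(u(s,t)),
\]
and integrate over $\BBB=\RRR\times[0,1]$. The total $s$-derivative integrates to a boundary term; since $u(s,\cdot)\to x_\pm$ and $H_{s,t}\to H_{\pm,t}$ as $s\to\pm\infty$, it produces $\int_0^1[H_+(x_+)-H_-(x_-)]\,dt$. Combined with $\int_\BBB u^*\omega$, this reassembles precisely the formula for $\Acc_{H_-}(x_-)-\Acc_{H_+}(x_+)$ recalled just before the lemma (which already encodes how the capping half-disks $w_\pm$ are related by $\xbar_-\sharp u=\xbar_+$, so the $\omega$-terms telescope correctly). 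What is left is the identity
\[
E(u) \;-\; \bigl[\Acc_{H_-}(x_-)-\Acc_{H_+}(x_+)\bigr] \;=\; \int_{-\infty}^{+\infty}\!\!\int_0^1 (\partial_s H_{s,t})(u(s,t))\,dt\,ds.
\]

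Finally I would bound the right-hand side pointwise by $\inf_M \partial_s H_{s,t} \le (\partial_s H_{s,t})(u(s,t)) \le \sup_M \partial_s H_{s,t}$ and integrate, yielding the desired double inequality $\alpha_-(\HHH)\le E(u)-[\Acc_{H_-}(x_-)-\Acc_{H_+}(x_+)]\le \alpha_+(\HHH)$. There is no real obstacle here; the only points requiring a little care are (i) justifying the limits at $\pm\infty$ in the integration by parts, which follows from the exponential decay to non-degenerate orbits discussed in subsubsection \ref{Subsubsection:Limits} (and in the $\Ncc_L(\HHH,\JJJ)$ setting from Proposition \ref{prop:Limits} together with finiteness of $E(u)$), and (ii) checking the sign conventions so that the capping half-disks $w_\pm$ combine compatibly with $\xbar_-\sharp u=\xbar_+$. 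Both are routine once written out.
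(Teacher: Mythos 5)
Your approach matches the paper's proof exactly: use the Floer equation to rewrite the energy density, integrate the resulting total $s$-derivative over $\BBB$, identify the boundary term together with $\int_\BBB u^*\omega$ as the action difference, and bound the leftover $\int \partial_s H_{s,t}(u)$ pointwise by $\alpha_\pm(\HHH)$. One cosmetic slip: since $-\omega(\partial_s u,X_{s,t})=-dH_{s,t}(\partial_s u)$, the density is $u^*\omega/(ds\wedge dt)\ \emph{minus}\ dH_{s,t}(\partial_s u)$, not plus; carried through the chain rule this reverses the sign of the boundary term to $\int_0^1[H_-(x_-)-H_+(x_+)]\,dt$, which is what actually reassembles into $\Acc_{H_-}(x_-)-\Acc_{H_+}(x_+)$ and yields the final identity you state (which is correct, as are the bounds).
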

\begin{proof} This follows from the following computations.
\begin{align*}
E(u) & = \int_{-\infty}^{+\infty}\int_0^1\omega(\partial_su,J\partial_su)dtds\\
 & = \int_{-\infty}^{+\infty}\int_0^1\omega(\partial_su,\partial_t u-X_{s,t}(u))dtds\\
 & = \int_{\BBB}u^*\omega-\int_{-\infty}^{+\infty}\int_0^1dH_{s,t}(\partial_su)dtds\\
 & = \left[\Acc_{H_-}(x_-)-\Acc_{H_+}(x_+)\right]+\int_{-\infty}^{+\infty}\int_0^1\partial_sH_{s,t}(u)\, dt ds\, .
\end{align*}
We have done. \end{proof}
\subsection{Remarks on the original proof due to Chekhanov.}\label{Subsection:RemarksOriginalProofChekhanov}
Originally, the proof of Chekanov is slightly different from the one presented in section \ref{Section:Chekhanov'sProof}. Let us explain why the approches are exactly the same. We introduce the auxiliary Hamiltonian:
\[F(t,x)=-H\left(t,\varphi_t^H (x)\right)\]
which satisfies the same conditions than $H$. Recall $\varphi_t^H\varphi_t^F=\id$ (\cite{HoferZehnder}, proposition 1, p. 144). In~\cite{Chek98}, Chekanov considered the displacement of $L$ as a whole by setting $L_s=\varphi_s^FL$. Set \[\Pi=\left\{(s,\xi),\, \xi\in W^{1,2}\left([0,1],M\right),\, \xi(0)\in L_0,\xi(1)\in L_s \right\}\] Chekanov defined a $\RRR/2a \ZZZ$-valued functional on $\Pi$ as a primitive of the one-form $\alpha-\beta$, where \begin{align*} \alpha(s,\xi) & = \int_0^1\omega(\dot{\xi},\cdot)\, , & \mbox{
and } \beta(s,\xi) & = F(s,\gamma(1))ds\, .
\end{align*}
Let $\Omega$ be the space of $L$-connecting paths of class $W^{1,2}$. The following map is a diffeomorphism
\[\Phi:\begin{matrix}
[0,1]\times \Omega & \longrightarrow & \Pi\\
(s, \gamma) & \longmapsto & \xi:t\mapsto \varphi^F_{st}(\xi_t)\, ,
\end{matrix}\]
whose differential is
\begin{align*}
d\Phi(s,\gamma)(\delta s,\delta \gamma) & = \left(\delta s, t\delta s Y_{st}(\gamma_t)+d\varphi^F_{st}(\gamma_t)\delta \gamma_t\right)\, .
\end{align*}
\begin{lemma}
With the above notations, we have:
\begin{align*}
\Phi^*\alpha-\Phi^*\beta & =d\Acc\\ \mbox{where } \Acc(s,\gamma) & = \int_0^1 sH_{st}\left[\gamma(t)\right]dt -\oint_x \omega\, .
\end{align*}
\end{lemma}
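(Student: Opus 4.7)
The plan is to compute $\Phi^*\alpha$, $\Phi^*\beta$, and $d\Acc$ all by direct substitution, and then match them term by term, the only nontrivial identification being a single integration by parts in the $ds$-component.

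First I would compute $\Phi^*\alpha$ by plugging the given formula for $d\Phi$ into $\alpha$. Writing $\xi_t=\varphi^F_{st}(\gamma_t)$, one has $\dot\xi_t=sY_{st}(\xi_t)+d\varphi^F_{st}(\gamma_t)\dot\gamma_t$, and
\[(\Phi^*\alpha)_{(s,\gamma)}(\delta s,\delta\gamma)=\int_0^1\omega\bigl(sY_{st}(\xi_t)+d\varphi^F_{st}\dot\gamma_t,\; t\delta s\,Y_{st}(\xi_t)+d\varphi^F_{st}\delta\gamma_t\bigr)\,dt.\]
The diagonal term $\omega(Y_{st},Y_{st})$ vanishes; the remaining pure term $\omega(d\varphi^F_{st}\dot\gamma_t,d\varphi^F_{st}\delta\gamma_t)$ collapses to $\omega(\dot\gamma_t,\delta\gamma_t)$ since $\varphi^F_{st}$ is symplectic; the two cross terms turn into evaluations of $dF_{st}$ by the defining identity $\iota_{Y_{st}}\omega=-dF_{st}$. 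The key algebraic simplification comes from the relation $F(t,\cdot)=-H(t,\varphi^H_t(\cdot))$ combined with $\varphi^H_t\varphi^F_t=\id$: evaluating at $st$ and at $x=\gamma_t$ gives $F(st,\xi_t)=-H(st,\gamma_t)$, and differentiating in $x$ yields $dF_{st}(\xi_t)\circ d\varphi^F_{st}(\gamma_t)=-dH_{st}(\gamma_t)$. Substituting into the cross terms rewrites $\Phi^*\alpha$ entirely in terms of $H$:
\[(\Phi^*\alpha)(\delta s,\delta\gamma)=\int_0^1\bigl[\omega(\dot\gamma_t,\delta\gamma_t)+s\,dH_{st}(\gamma_t)(\delta\gamma_t)-t\,\delta s\,dH_{st}(\gamma_t)(\dot\gamma_t)\bigr]\,dt.\]

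Next I would compute $\Phi^*\beta$: by definition $\Phi^*\beta=F(s,\varphi^F_s(\gamma_1))\,\delta s$, and the same relation $F(s,\varphi^F_s(x))=-H(s,x)$ gives $\Phi^*\beta=-H(s,\gamma_1)\,\delta s$. Then I would compute $d\Acc$ directly. The $\gamma$-derivative of $-\oint_\gamma\omega$ gives $\int_0^1\omega(\dot\gamma_t,\delta\gamma_t)\,dt$ (standard path-space computation, using that $\delta\gamma_0,\delta\gamma_1\in TL$ so Lagrangian boundary terms vanish); the $s$-derivative of $\oint_\gamma\omega$ is zero since this quantity depends only on $\gamma$. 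Hence
\[d\Acc(\delta s,\delta\gamma)=\delta s\!\int_0^1\!\bigl[H_{st}(\gamma_t)+st(\partial_1H)_{st}(\gamma_t)\bigr]dt+\int_0^1\!\bigl[s\,dH_{st}(\gamma_t)(\delta\gamma_t)+\omega(\dot\gamma_t,\delta\gamma_t)\bigr]dt.\]

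Comparison is now immediate on the $\delta\gamma$-component. For the $\delta s$-component, I would use the single identity
\[\frac{d}{dt}\bigl[t\,H(st,\gamma_t)\bigr]=H_{st}(\gamma_t)+st(\partial_1H)_{st}(\gamma_t)+t\,dH_{st}(\gamma_t)(\dot\gamma_t),\]
integrate from $0$ to $1$, and evaluate the boundary $[tH(st,\gamma_t)]_0^1=H(s,\gamma_1)$. This is precisely the identity needed to convert the $-t\,\delta s\,dH_{st}(\dot\gamma_t)$ term of $\Phi^*\alpha$ plus the $+H(s,\gamma_1)\,\delta s$ coming from $-\Phi^*\beta$ into the $\delta s$-coefficient of $d\Acc$.

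I do not expect any serious obstacle: the symplecticity of $\varphi^F_{st}$ and the naturality relation $F\circ\varphi^F_{st}=-H$ do all the geometric work, and the only subtlety is the sign bookkeeping and the integration by parts in the last step. The most likely place for a sign error is the variation formula for $-\oint_\gamma\omega$; one has to check that the Lagrangian condition kills the boundary contributions at $t=0,1$, which it does since $\omega$ vanishes on $TL$.
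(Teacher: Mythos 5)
Your proposal is correct and follows essentially the same route as the paper: you compute $\Phi^*\alpha$ via the symplecticity of $\varphi^F_{st}$ and the relation $F_{st}\circ\varphi^F_{st}=-H_{st}$, identify $\Phi^*\beta=-H(s,\gamma_1)\,\delta s$, and close the $\delta s$-component with the identity $\partial_t[tH_{st}(\gamma_t)]=t\,dH_{st}(\dot\gamma_t)+\partial_s[sH_{st}(\gamma_t)]$ integrated over $[0,1]$ — which is exactly the paper's preliminary computation. The only cosmetic difference is that you expand $d\Acc$ explicitly and match coefficients, while the paper rewrites $\Phi^*\alpha-\Phi^*\beta$ directly into the form of $d\Acc$.
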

\noindent For $s$ fixed, note that $\{sH_{st}\}$ is the Hamiltonian which generates the Hamiltonian path $\{\varphi^H_{st}\}$.
\begin{proof} As a preliminary, we compute the following derivation: \begin{align*}
\partial_t\left[t H_{st}(\gamma_t)\right] & = H_{st}\left(\gamma_t \right)+ t dH_{st}\left(\dot{\gamma}_t\right) + ts \left((\partial_t H)_{st}\right) \left(\gamma_t \right) \\  & = t dH_{st}(\dot{\gamma}_t) +\partial_s\left[s H_{st}\left( \gamma_t  \right)\right]\, . \end{align*}
By integrating from $0$ to $1$, it comes:
\begin{align*}
\int_0^1\partial_s\left[s H_{st}\left( \gamma_t  \right)\right] & = +H\left(s,\gamma(1)\right)- \int_0^1  t dH_{st}\left(\dot{\gamma}_t\right)\, .
\end{align*}
Here is the computations of $\Phi^*\alpha$ and $\Phi^*\beta$:
\begin{align*}
(\Phi^*\alpha)(s,\gamma)(\delta s,\delta \gamma) & = \int_0^1 \omega\left(s Y_{st}(\gamma_t) +d\varphi^F_{st} \dot{\gamma}_t,t\delta s Y_{st}(\gamma_t)+ d\varphi^F_{st}(\gamma_t)\delta\gamma_t\right)\\  & = \int_0^1 \left[ -s d\left(F_{st}\circ \varphi^F_{st}\right)  \delta\gamma_t + t\delta s d\left(F_{st}\circ \varphi^F_{st}\right) \partial_t\gamma_t+  \omega\left(\dot{\gamma}_t,\delta\gamma_t\right)\right]\\  & = \int_0^1 \left[\omega(\dot{\gamma}_t,\delta\gamma_t)+s
 dH_{st}\left(\delta\gamma_t\right)\right]-\delta s \int_0^1 t  dH_{st}\left[\dot{\gamma}_t\right]\, ,\\ \mbox{and }(\Phi^*\beta)(s,\gamma)(\delta s,\delta \gamma) & = -H\left(s,\gamma(1)\right) \delta s\, .
\end{align*}
Hence, we get:
\begin{align*}
\left[\Phi^*\alpha-\Phi^*\beta\right](s,\gamma)(\delta s,\delta \gamma) & = \int_0^1\left[\omega\left( \dot{\gamma}_t,\delta\gamma_t\right)+s dH_{st}\cdot \delta \gamma_t\right]+\delta s \int_0^1 \partial_s\left[s H_{st}(\gamma_t) \right]\\ & = d\Acc(s,\gamma)(\delta s,\delta \gamma)\, .
\end{align*}
\end{proof}
Once this computation made, it is clear that the two approaches are equivalent. By completing the $L$-orbits by half-disks, we simply take into account possible disks with zero
Maslov index in the kernel of the symplectic action $\omega:\pi_2(M,L)\rightarrow \RRR$, which gives a graduation.

Another slight difference, Chekanov defined the continuation morphisms from $\epsilon H_{\epsilon t}$ to $H_t$ and from $H_t$ to $\epsilon H_{\epsilon t}$. 
\newpage
\small{
}
\end{document}